\title[]{Optimal path and cycle decompositions of dense quasirandom graphs}
\date{\today}
\author{Stefan Glock, Daniela K\"uhn and Deryk Osthus}
\thanks{The research leading to these results was partially supported by the European Research Council
under the European Union's Seventh Framework Programme (FP/2007--2013) / ERC Grant
Agreements no. 258345 (D.~K\"uhn) and 306349 (S.~Glock and D.~Osthus).}
\newtheorem{firstthm}{Proposition}[section]
\newtheorem{theorem}[firstthm]{Theorem}
\newtheorem{prop}[firstthm]{Proposition}
\newtheorem{lemma}[firstthm]{Lemma}
\newtheorem{cor}[firstthm]{Corollary}
\newtheorem{conj}[firstthm]{Conjecture}
\def\noproof{{\unskip\nobreak\hfill\penalty50\hskip2em\hbox{}\nobreak\hfill%
       $\square$\parfillskip=0pt\finalhyphendemerits=0\par}\goodbreak}
\def\endproof{\noproof\bigskip}
\newdimen\margin   
\def\textno#1&#2\par{%
   \margin=\hsize
   \advance\margin by -4\parindent
          \setbox1=\hbox{\sl#1}%
   \ifdim\wd1 < \margin
      $$\box1\eqno#2$$%
   \else
      \bigbreak
      \hbox to \hsize{\indent$\vcenter{\advance\hsize by -3\parindent
      \it\noindent#1}\hfil#2$}%
      \bigbreak
   \fi}
\def\proof{\removelastskip\penalty55\medskip\noindent{\bf Proof. }}
\def\lateproof#1{\removelastskip\penalty55\medskip\noindent{\bf Proof of #1. }}
\begin{document}

\def\COMMENT#1{}
\def\TASK#1{}

\def\eps{{\varepsilon}}
\newcommand{\ex}{\mathbb{E}}
\newcommand{\pr}{\mathbb{P}}
\newcommand{\cB}{\mathcal{B}}
\newcommand{\cE}{\mathcal{E}}
\newcommand{\cS}{\mathcal{S}}
\newcommand{\cF}{\mathcal{F}}
\newcommand{\cC}{\mathcal{C}}
\newcommand{\bN}{\mathbb{N}}
\newcommand{\bR}{\mathbb{R}}
\def\O{\mathcal{O}}
\newcommand{\cP}{\mathcal{P}}
\newcommand{\cQ}{\mathcal{Q}}
\newcommand{\cR}{\mathcal{R}}
\newcommand{\cK}{\mathcal{K}}
\newcommand{\cD}{\mathcal{D}}
\newcommand{\cI}{\mathcal{I}}
\newcommand{\cV}{\mathcal{V}}
\newcommand{\1}{{\bf 1}_{n\not\equiv \delta}}
\newcommand{\eul}{{\rm e}}

\newcommand{\prob}[1]{\mathrm{\mathbb{P}}(#1)}
\newcommand{\expn}{\mathrm{\mathbb{E}}}
\newcommand{\whp}{a.a.s. }
\newcommand{\Whp}{A.a.s. }
\def\gnp{G_{n,p}}
\def\G{\mathcal{G}}
\def\lflr{\left\lfloor}
\def\rflr{\right\rfloor}
\def\lcl{\left\lceil}
\def\rcl{\right\rceil}

\newcommand{\brackets}[1]{\left(#1\right)}
\def\sm{\setminus}
\newcommand{\Set}[1]{\{#1\}}
\newcommand{\set}[2]{\{#1\,:\;#2\}}
\def\In{\subseteq}
\def\all{\forall \,}

\begin{abstract}  \noindent
Motivated by longstanding conjectures regarding decompositions of graphs into paths and cycles, we prove the following optimal decomposition results for random graphs. Let $0<p<1$ be constant and let $G\sim\gnp$. Let $odd(G)$ be the number of odd degree vertices in $G$. Then \whp the following hold:
\begin{itemize}
\item [(i)] $G$ can be decomposed into $\lflr\Delta(G)/2\rflr$ cycles and a matching of size $odd(G)/2$.
\item [(ii)] $G$ can be decomposed into $\max\Set{odd(G)/2,\lceil\Delta(G)/2\rceil}$ paths.
\item [(iii)] $G$ can be decomposed into $\lceil\Delta(G)/2\rceil$ linear forests.
\end{itemize}
Each of these bounds is best possible.
We actually derive (i)--(iii) from `quasirandom' versions of our results. In that context, we also determine the edge chromatic number of a given dense quasirandom graph of even order. For all these results, our main tool is a result on Hamilton decompositions of robust expanders by K{\"u}hn and Osthus.
\end{abstract}

\maketitle


%
%
%
%
%

\section{Introduction}\label{sec:intro}

There are several longstanding and beautiful conjectures on decompositions of graphs into cycles and/or paths. In this paper, we consider four of the most well-known in the setting of dense quasirandom and random graphs: the Erd\H{o}s-Gallai conjecture, the Gallai conjecture on path decompositions, the linear arboricity conjecture as well as the overfull subgraph conjecture.

\subsection{Decompositions of random graphs}

A classical result of Lov\'asz \cite{L} on decompositions of graphs states that the edges of any graph on $n$ vertices can be decomposed into at most $\lfloor n/2\rfloor$ cycles and paths. Erd\H{o}s and Gallai \cite{E,EGP}\COMMENT{cited as in \cite{KKS}} made the related conjecture that the edges of every graph $G$ on $n$ vertices can be decomposed into $\O(n)$ cycles and edges.\COMMENT{often cited along with this is the result of Pyber that every graph on $n$ vertices can be covered by $n-1$ cycles, which was also conjectured by Erdos. He also proved that every connected graph can be covered by $n/2+\O(n^{3/4})$ paths and pointed out that if one proves an asymptotic version of the Gallai conjecture then one proves the conjecture.} Conlon, Fox and Sudakov \cite{CFS} recently showed that $\O(n\log \log n)$ cycles and edges suffice and that the conjecture holds for graphs with linear minimum degree. They also proved that the conjecture holds \whp for the binomial random graph $G\sim\gnp$. Kor{\'a}ndi, Krivelevich and Sudakov \cite{KKS} carried out a more systematic study of the problem for $\gnp$: for a large range of $p$, \whp $\gnp$ can be decomposed into $n/4+np/2+o(n)$ cycles and edges, which is asymptotically best possible. They also asked for improved error terms. For constant $p$, we will give an exact formula.

A further related conjecture of Gallai (see \cite{L})\COMMENT{Lovasz says that Erdos asked for the minimal number of paths needed and Gallai gave the number $n/2$.} states that every connected graph on $n$ vertices can be decomposed into $\lceil n/2\rceil$ paths. The result of Lov\'asz mentioned above implies that for every (not necessarily connected) graph $n-1$ paths suffice.\COMMENT{Lovasz deduced from his theorem that $odd(G)/2+even(G)-1$ paths suffice if $even(G)\geq1$ (by adding some vertices). If $even(G)=0$, then $n$ is even and so we have a decomposition into $n/2=odd(G)/2$ paths and cycles, which implies that there are no cycles. Since $n-1$ is not really better than $n$ (obvious by splitting every cycle), the effort to get the $-1$ is maybe not worth it. } This has been improved to $\lfloor2n/3\rfloor$ paths \cite{DK,Y}. Here we determine the number of paths in an optimal path decomposition of $\gnp$ for constant $p$. In particular this implies that Gallai's conjecture holds (with room to spare) for almost all graphs.

Next, recall that an edge colouring of a graph is a partition of its edge set into matchings. A matching can be viewed as a forest whose connected components are edges. As a relaxation of this, a \emph{linear forest} is a forest whose components are paths, and the least possible number of linear forests needed to partition the edge set of a graph $G$ is called the \emph{linear arboricity} of $G$, denoted by $la(G)$. Clearly, in order to cover all edges at any vertex of maximum degree, we need at least $\lceil\Delta(G)/2\rceil$ linear forests. However, for some graphs (e.g. complete graphs on an odd number of vertices) we need at least $\lceil(\Delta(G)+1)/2\rceil$ linear forests. The following conjecture is known as the linear arboricity conjecture and can be viewed as an analogue to Vizing's theorem.

\begin{conj}[Akiyama, Exoo, Harary \cite{AEH}] \label{conj:forests}
For every graph $G$, $la(G)\leq\lceil(\Delta(G)+1)/2\rceil$.
\end{conj}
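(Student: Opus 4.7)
This is the linear arboricity conjecture, one of the most celebrated open problems on graph decompositions. Despite substantial progress -- most notably Alon's asymptotic result $la(G)=\Delta/2+o(\Delta)$ -- the exact bound remains unproved. My proposal is therefore necessarily speculative: it outlines the route I would attempt and pinpoints the obstacle that has so far defeated it.

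The first step is a reduction to the case when $G$ is $\Delta$-regular, where $\Delta:=\Delta(G)$. Given $G$, one can embed it into a $\Delta$-regular host $G'$ (take two copies of $G$ and iteratively add matchings between vertices of deficient degree, using a subdivision trick to handle parity if needed), and any linear forest decomposition of $G'$ restricts to one of $G$. So it suffices to show $la(G)\le\lceil(\Delta+1)/2\rceil$ when $G$ is $\Delta$-regular. One can moreover reduce to the case of $\Delta$ odd: if $\Delta$ is even, remove a $2$-factor of $G$ (using $2$-factorisability of even regular graphs), which contributes a single linear forest after breaking each of its cycles open, leaving a $(\Delta-2)$-regular remainder; one extra linear forest is precisely the slack the conjecture allows.

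For the remaining regular case, the natural strategy is probabilistic, following Alon. Choose an orientation of $G$ so that $|d^+(v)-d^-(v)|\le 1$ at every vertex (always possible), and try to colour $E(G)$ with $k:=\lceil(\Delta+1)/2\rceil$ colours so that each vertex sees at most one in-edge and one out-edge of each colour (forcing maximum degree $2$ in each colour class) and no monochromatic cycle is created. The degree constraint alone can be secured with positive probability via the Lov\'asz Local Lemma applied to a uniform random colouring, and one would then attempt to destroy monochromatic cycles in a cleanup phase: uncolour edges inside short cycles and recolour them greedily, controlling the process by the R\"odl nibble / semi-random method.

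The main obstacle -- the reason the conjecture remains open -- is the exact constant. Alon's argument along these lines inevitably loses an additive $o(\Delta)$ term, and closing this slack seems to require a qualitatively new idea. A plausible route is absorption: build a reservoir of partial linear forests in advance, flexible enough to incorporate whatever edges remain after a nibble-based colouring of the bulk. It is precisely this kind of exact control that the K\"uhn--Osthus Hamilton decomposition theorem provides in the quasirandom regime, and that is why the paper can establish~(iii) on the nose for random graphs; transferring such control to arbitrary graphs -- where no expansion or pseudorandomness is available -- is the heart of the open problem.
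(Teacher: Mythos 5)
This statement is labelled \texttt{conj} in the source for good reason: the linear arboricity conjecture is open, the paper does not prove it, and neither does your proposal --- which you state clearly, and correctly. The paper merely records Conjecture~\ref{conj:forests} and then proves special cases: Theorem~\ref{thm:gnp}(iii) and Theorem~\ref{thm:pathforests}(ii) for dense (quasi)random graphs, Corollary~\ref{cor:forests random} for sparser random graphs, and Corollary~\ref{cor:forests large} for regular graphs with $d\geq\lfloor(n-1)/2\rfloor$. Your reading of the landscape (Alon's $o(\Delta)$ slack, the role of absorption, and the fact that robust-expander Hamilton decompositions are exactly the extra structure that lets the paper get exact results in the quasirandom regime) is accurate and matches the authors' own framing.

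One concrete flaw in your sketch, separate from the acknowledged open gap: the claimed reduction from even $\Delta$ to odd $\Delta$ does not work. If $G$ is $2k$-regular and you remove a $2$-factor $F$, then $F$ is a disjoint union of cycles, and a cycle has linear arboricity $2$, not $1$: opening each cycle gives one path per cycle \emph{plus} one deleted edge per cycle, and the deleted edges form a second linear forest that still has to be placed somewhere. So this step costs $2$ linear forests while the conjectured bound only drops by $1$ when $\Delta$ decreases by $2$, and you end up with $k+2$ rather than the target $k+1$. Also, $G-F$ is $(2k-2)$-regular, i.e.\ still of even degree, so the recursion never reaches an odd-degree base case. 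The reduction to the \emph{regular} case is fine (and is noted in the paper as an equivalent formulation), but the even-to-odd step would need a different mechanism, e.g.\ removing a near-perfect matching rather than a $2$-factor, or an absorption step that re-inserts the opened edges into other colour classes.
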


This is equivalent to the statement that for all $d$-regular graphs $G$, $la(G)=\lceil(d+1)/2\rceil$.\COMMENT{Assume the conjecture is true and let $G$ be $d$-regular. By conjecture, $la(G)\leq\lceil(d+1)/2\rceil$. For $d$ odd, this is equal to the lower bound $\lceil d/2\rceil$. For $d$ even, the lower bound $\lceil d/2\rceil$ cannot be attained as every vertex must have been an inner vertex of every linear forest.
Suppose the statement on regular graphs holds. For a graph $G$ which is not regular, we need to find a graph $G'$ that is $\Delta(G)$-regular and contains $G$ as a subgraph. Then, each linear forest of $G'$ induces a linear forest of $G$. If $x$ does not have maximum degree, take $\Delta(G)-d_{G_i}(x)+1$ copies of $G_i$ (current graph) and add a complete graph on all copies of $x$. Proceed until all vertices have degree $\Delta(G)$.}
Alon \cite{A} proved an approximate version of the conjecture for sufficiently large values of $\Delta(G)$. Using his approach, McDiarmid and Reed \cite{MR} confirmed the conjecture for random regular graphs with fixed degree. We will show that, for a large range of $p$, \whp the random graph $G\sim\gnp$ can be decomposed into $\lceil\Delta(G)/2\rceil$ linear forests. Moreover, we use the recent confirmation \cite{CBKLOT} of the so-called `Hamilton decomposition conjecture' to deduce that the linear arboricity conjecture holds for large and sufficiently dense regular graphs (see Corollary~\ref{cor:forests large}).

The following theorem summarises our optimal decomposition results for dense random graphs. We denote by $odd(G)$ the number of odd degree vertices in a graph $G$.

\begin{theorem} \label{thm:gnp}
Let $0<p<1$ be constant and let $G\sim\gnp$. Then \whp the following hold:
\begin{itemize}
\item [(i)] $G$ can be decomposed into $\lflr\Delta(G)/2\rflr$ cycles and a matching of size $odd(G)/2$.
\item [(ii)] $G$ can be decomposed into $\max\Set{odd(G)/2,\lceil\Delta(G)/2\rceil}$ paths.
\item [(iii)] $G$ can be decomposed into $\lceil\Delta(G)/2\rceil$ linear forests, i.e. $la(G)=\lceil\Delta(G)/2\rceil$.
\end{itemize}
\end{theorem}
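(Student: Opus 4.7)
The plan is to deduce Theorem~\ref{thm:gnp} from deterministic ``quasirandom'' analogues by showing that $G\sim\gnp$ \whp satisfies the required quasirandom hypotheses. Those hypotheses should include: (a) being a sufficiently strong robust expander, so that the K{\"u}hn--Osthus Hamilton decomposition theorem applies to any nearly regular spanning subgraph; (b) tight two-sided concentration of degrees, in particular $\delta(G),\Delta(G)=np\pm(1+o(1))\sqrt{2p(1-p)n\log n}$; and (c) concentration $odd(G)=n/2\pm O(\sqrt{n}\log n)$. Items (b) and (c) follow from Chernoff bounds on $d(v)\sim\mathrm{Bin}(n-1,p)$ together with a standard tail/Poisson-approximation argument for the extremal degree, while robust expansion is verified directly from the edge-distribution estimates for $\gnp$ with constant $p$.

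With the quasirandom framework in hand, the strategy for part~(i) is the following. Set $r:=2\lfloor\Delta(G)/2\rfloor$. First, find a matching $M$ of size $odd(G)/2$ that covers exactly the odd-degree vertices; since these vertices are well spread out in a quasirandom graph, such an $M$ can be produced by a short greedy/probabilistic selection. Then $G':=G-M$ is Eulerian with maximum degree $r$. Next, peel off a sparse ``correction'' subgraph $H\In G'$ so that $G'':=G'-H$ is exactly $r$-regular; this is possible because the degrees of $G'$ lie in $[r-O(\sqrt{n\log n}),\,r]$, and the defect can be covered by a bounded-degree subgraph that we construct using the quasirandom edge distribution. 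Since $G''$ inherits robust expansion, the K{\"u}hn--Osthus theorem decomposes it into $r/2$ edge-disjoint Hamilton cycles $C_1,\dots,C_{r/2}$. Finally, absorb the edges of $H$ back into the $C_i$ so that each resulting object is still a single cycle: this yields the desired decomposition of $G$ into $\lfloor\Delta/2\rfloor$ cycles together with the matching~$M$.

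Parts~(ii) and~(iii) follow by playing the same game and adjusting the very last step: for (ii) we break each enlarged cycle at a suitable edge to produce a path, being careful to place cut edges at the $odd(G)$ odd-degree vertices when $odd/2\geq\lceil\Delta/2\rceil$; for (iii) we do not bother restoring cycle structure and instead arrange the absorption so that each $C_i$ plus its piece of $H$ is a linear forest. The main obstacle throughout is the absorption step: naively inserting an edge of $H$ into a single Hamilton cycle either creates a new cycle (if it is a chord) or an odd-degree vertex, so one must redistribute edges across several $C_i$, e.g.\ by merging two Hamilton cycles along a common vertex and rerouting through an edge of $H$. Showing that this rerouting can always be carried out so that the cycle/path/linear-forest count matches the extremal lower bound exactly is the technical heart of the argument, and it is here that the full robust-expander hypothesis (and not merely the existence of a Hamilton decomposition) is needed.
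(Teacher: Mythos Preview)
Your overall framework---verify quasirandom hypotheses for $\gnp$ and then invoke deterministic decomposition results---is exactly what the paper does. But the heart of your argument for~(i) contains a genuine gap.

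You set $r:=2\lfloor\Delta(G)/2\rfloor=\Delta(G')$ and then propose to ``peel off a sparse correction subgraph $H\subseteq G'$ so that $G'':=G'-H$ is exactly $r$-regular.'' This is impossible as stated: $r$ is the \emph{maximum} degree of $G'$, so removing edges can only push degrees further below~$r$, never up to it. If instead you regularise down to some $r'\le\delta(G')$, then the Hamilton decomposition of $G''$ gives only $r'/2$ cycles, and you must somehow manufacture $(r-r')/2$ \emph{additional} cycles through the max-degree vertex out of the leftover~$H$. But $H$ is sparse and wildly irregular (many vertices have degree~$0$ in~$H$), so this is essentially the original problem on a non-quasirandom graph. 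Your absorption step does not fix this: inserting a chord into a Hamilton cycle may split it in two, but it also changes the degree of the chord's endpoints, forcing further corrections whose termination and bookkeeping you have not addressed. The remark that ``this rerouting can always be carried out'' is precisely the content of the theorem, not a routine endgame.

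The paper sidesteps absorption entirely. For the Eulerian part (Theorem~\ref{thm:Eulerian plain}, via the directed Theorem~\ref{thm:Eulerian}) it iteratively removes one cycle $C_i$ containing \emph{all} current vertices of maximum degree; this lowers $\Delta$ by exactly~$2$ each step. A fixed partition $S_1,S_2$ of $V(G)$ (obtained via Lemma~\ref{lem:auxsets}) is used so that in odd steps $C_i$ lives in $S_1\cup Z_i$ and in even steps in $S_2\cup Z_i$; hence a non-maximum-degree vertex loses degree at most~$1$ per two steps, and $\delta$ falls at half the rate of~$\Delta$. After at most $2\eta n$ steps the graph is regular, still a robust outexpander, and Theorem~\ref{thm:hamdecrobexp} finishes. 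No edges are ever re-inserted. For (ii) and (iii) the paper does not ``cut cycles at suitable edges'' either: it builds an auxiliary Eulerian (di)graph by adding a new vertex $w$ with carefully chosen incidences (Theorem~\ref{thm:pathforests}), applies the cycle decomposition there, and obtains paths or linear forests by deleting~$w$; the consistency-with-$M$ clause in Theorem~\ref{thm:Eulerian} is what makes the linear-forest count come out exactly right. Finally, the matching on odd-degree vertices is not produced greedily but via Lemma~\ref{lem:odd matching}, which shows $G[D]$ itself is lower-$(p,\eps')$-regular with linear minimum degree and hence Hamiltonian.
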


Clearly, each of the given bounds is best possible.\COMMENT{best possible cycle: Let $G$ be any graph and let $H$ be a spanning subgraph of $G$ such that $G\sm H$ can be decomposed into $c$ cycles. Let $e=|E(H)|$. We want to bound $e+c$ from below. Clearly, $c\geq\Delta(G\sm H)/2\geq(\Delta(G)-\Delta(H))/2$. Let $S$ be the set of vertices with odd degree in $G$. Since $G\sm H$ is Eulerian, we have $d_H(x)\geq1$ for all $x\in S$. Therefore, $2|E(H)|=\sum d_H(x)=|S|+\sum_{x\in S}(d_H(x)-1)+\sum_{x\not\in S}d_H(x)\geq|S|+\Delta(H)-1$. We conclude that $e+c\geq|S|/2+\Delta(G)/2-1/2$, implying $e+c\geq odd(G)/2+\lflr\Delta(G)/2\rflr$.} 
Moreover, as observed e.g. in \cite{KKS} for a large range of $p$, \whp $odd(\gnp)=(1+o(1))n/2$. This means that for fixed $p<1/2$, the size of an optimal path decomposition of $\gnp$ is determined by the number of odd degree vertices, whereas for $p>1/2$, the maximum degree is the crucial parameter.

A related result of Gao, P\'erez-Gim\'enez and Sato \cite{GPS} determines the arboricity and spanning tree packing number of $\gnp$. Optimal results on packing Hamilton cycles in $\gnp$ which together cover essentially the whole range of $p$ were proven in \cite{KKO,KS}.

One can extend Theorem~\ref{thm:gnp}(iii) to the range $\frac{\log^{117}n}{n}\leq p=o(1)$ by applying a recent result in \cite{HKLO} on covering $\gnp$ by Hamilton cycles (see Corollary~\ref{cor:forests random}). It would be interesting to obtain corresponding exact results also for (i) and (ii). In particular we believe that the following should hold.

\begin{conj} \label{conj:paths}
Suppose $p=o(1)$ and $\frac{pn}{\log n}\rightarrow\infty$. Then \whp $G\sim\gnp$ can be decomposed into $odd(G)/2$ paths. 
\end{conj}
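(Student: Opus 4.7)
The natural strategy is to extend the approach behind Theorem~\ref{thm:gnp}(ii) into the sparse regime $p=o(1)$, $pn/\log n\to\infty$, by combining sparse Hamilton packing results with a refined path-construction argument.

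Let $O$ denote the set of odd-degree vertices of $G$, so that \whp $|O|=(1+o(1))n/2$. The first step is to locate a matching $M\subseteq E(G)$ with $V(M)=O$; such an $M$ exists \whp in the conjectured range (e.g.\ via Hall's theorem applied to the random bipartite-like substructure supported on $O$, or by direct near-perfect matching results in sparse $\gnp$). The graph $G':=G-M$ then has all even degrees, and \whp $\Delta(G'),\delta(G')=np(1\pm o(1))$.

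Next, apply a sparse-$\gnp$ Hamilton packing result (e.g.\ \cite{KKO} in its range of validity) to obtain $k\approx np/2$ edge-disjoint Hamilton cycles $C_1,\dots,C_k$ in $G'$ with a sparse residue $R:=G'-\bigcup_i C_i$ of maximum degree $o(np)$. A switching-based absorption argument should incorporate $R$ into the $C_i$'s, reducing to the idealized case $G=\bigcup_{i=1}^k C_i\cup M$.

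The heart of the proof is decomposing this idealized graph into exactly $|M|\approx n/4$ paths, each having its two endpoints in $O$ and covering (in aggregate) all edges of $\bigcup_i C_i\cup M$. Note $|M|\gg k$, so each Hamilton cycle is ``threaded'' by roughly $|M|/k\approx 1/(2p)$ paths, each of average length $\approx 2np$. A clean formulation: adjoin an auxiliary vertex $v^*$ adjacent to every $w\in O$, yielding an Eulerian multigraph $H^*$. Any Eulerian circuit of $H^*$ visits $v^*$ exactly $|M|$ times; removing $v^*$ breaks the circuit into $|M|$ trails, each endpointed by two distinct vertices of $O$. It remains to choose such an Eulerian circuit whose $|M|$ constituent trails are \emph{all} vertex-simple paths.

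The main obstacle is exactly this final step. Since each trail has length $\approx 2np\ll n$, there is considerable room to avoid vertex repetitions, suggesting a probabilistic argument: a suitably chosen (near-random) Eulerian circuit of $H^*$ should produce mostly simple paths, with at most $o(|M|)$ trails needing repair. Those few bad trails would be fixed using a path-decomposition absorber --- a carefully constructed subgraph set aside at the outset whose flexibility admits local swaps converting repeated-vertex trails into genuine paths while preserving the global decomposition. Designing such an absorber that functions in the sparse regime $p=o(1)$ is the principal technical hurdle, since the standard rotation/extension toolkit developed for Hamilton cycles and cycle decompositions does not directly carry over to the more restrictive path-decomposition setting.
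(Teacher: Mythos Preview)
The statement you are attempting to prove is Conjecture~\ref{conj:paths}, which the paper poses as an \emph{open problem}; there is no proof in the paper to compare against. The only remark the authors make is that an \emph{approximate} version follows from the Kor\'andi--Krivelevich--Sudakov decomposition of sparse $\gnp$ into cycles and edges, by splitting each cycle into two paths.

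Your proposal is honest about being a plan rather than a proof, and you correctly identify the crux: turning an Eulerian circuit of $H^*$ into $odd(G)/2$ \emph{simple} paths. A few concrete issues are worth flagging. First, the Hamilton-cycle packing step and the matching $M$ play no real role once you pass to the auxiliary vertex $v^*$: the Eulerian-circuit trick applies directly to $G$ itself and already yields $odd(G)/2$ edge-disjoint trails with endpoints in $O$, regardless of any prior structure. So the reduction to ``$\bigcup C_i \cup M$'' is not actually a simplification unless you intend to exploit the Hamilton cycles to \emph{route} the Eulerian circuit, which you do not spell out. Second, the claim that a near-random Eulerian circuit produces mostly simple trails is plausible heuristically (each trail has length $\Theta(np)=o(n)$), but sampling Eulerian circuits uniformly is delicate, and the dependencies between successive steps of the walk are non-trivial; you would need a concrete model and a genuine second-moment or concentration argument here. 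Third, the absorber you invoke at the end is doing all the real work: converting $o(n)$ bad trails into paths without disturbing the rest of the decomposition is precisely the exact-vs-approximate gap that makes this a conjecture rather than a theorem, and nothing in the existing rotation--extension or robust-expander toolkit is known to accomplish this in the sparse path-decomposition setting. As written, the proposal is a reasonable outline of where the difficulty lies, but it does not close the gap.
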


By tracking the number of cycles in the decomposition constructed in \cite{KKS} and by splitting every such cycle into two paths, one immediately obtains an approximate version of Conjecture~\ref{conj:paths}. Note that this argument does not yield an approximate version of Theorem~\ref{thm:gnp}(ii) in the case when $p$ is constant.

\subsection{Dense quasirandom graphs}

As mentioned earlier, we will deduce Theorem~\ref{thm:gnp} from quasirandom versions of these results. As our notion of quasi\-random\-ness, we will consider the following one-sided version of $\eps$-regularity. Let $0<\eps,p<1$. A graph $G$ on $n$ vertices is called \emph{lower-$(p,\eps)$-regular} if we have $e_G(S,T)\geq(p-\eps)|S||T|$ for all disjoint $S,T\In V(G)$ with $|S|,|T|\geq\eps n$. 

The next theorem is a quasirandom version of Theorem~\ref{thm:gnp}(i). Similarly, we will also prove quasirandom versions of parts (ii) and (iii) (see Theorem~\ref{thm:pathforests}).

\begin{theorem} \label{thm:Eulerian plain}
For all $0<p<1$ there exist $\eps,\eta>0$ such that for sufficiently large $n$, the following holds: Suppose $G$ is a lower-$(p,\eps)$-regular graph on $n$ vertices. Moreover, assume that $\Delta(G)-\delta(G)\leq\eta n$ and that $G$ is Eulerian. Then $G$ can be decomposed into $\Delta(G)/2$ cycles.
\end{theorem}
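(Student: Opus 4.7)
The strategy is to decompose $G$ edge-disjointly as $G=F\cup G'$, where $G'$ is $\delta(G)$-regular and $F$ carries all the degree excess, and to handle each piece separately. Set $k:=(\Delta(G)-\delta(G))/2$. Since $G$ is Eulerian all degrees are even, so $D(v):=d_G(v)-\delta(G)$ is an even integer in $\{0,2,\ldots,2k\}$. The aim is to produce $F\In G$ with $d_F(v)=D(v)$ for all $v$ together with a decomposition of $F$ into exactly $k$ cycles; the residual $G'=G\sm F$ is then $\delta(G)$-regular and will contribute $\delta(G)/2$ Hamilton cycles, giving $k+\delta(G)/2=\Delta(G)/2$ cycles in total.

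For the regular part $G'$, the number of edges removed in forming $F$ is at most $\frac{1}{2}\sum_v D(v)\leq\eta n^2/2$, so provided $\eta\ll\eps$ the subgraph $G'$ remains lower-$(p,2\eps)$-regular. Since $p$ is a positive constant, $G'$ is then a $\delta(G)$-regular robust $(\nu,\tau)$-expander, and the K\"uhn--Osthus Hamilton decomposition theorem (the main tool advertised in the introduction) supplies the required $\delta(G)/2$ edge-disjoint Hamilton cycles.

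The heart of the argument is the construction of $F$ with its cycle decomposition. My plan is iterative: for $i=1,\ldots,k$, write $G_{i-1}:=G\sm(C_1\cup\cdots\cup C_{i-1})$; an easy induction shows that the current maximum-degree set of $G_{i-1}$ equals $V_i^+:=\{v:d_G(v)\geq\Delta(G)-2(i-1)\}$, a nested chain with $V_1^+=\{v:d_G(v)=\Delta(G)\}$ and $V_k^+=V\sm V^-$, where $V^-:=\{v:d_G(v)=\delta(G)\}$. I then extract $C_i$ as a Hamilton cycle of $G_{i-1}[V_i^+]$, which reduces the current maximum degree by two, leaves the minimum at $\delta(G)$, and yields $d_F(v)=D(v)$ after $k$ rounds. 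The existence of each $C_i$ follows from the inherited lower-$(p,3\eps)$-regularity of $G_{i-1}[V_i^+]$ together with the Hamiltonicity of dense quasirandom graphs delivered by the same robust-expander machinery.

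The main obstacle will be the early rounds, where $V_i^+$ may be smaller than the quasirandom threshold $\eps n$, so that $G_{i-1}[V_i^+]$ is not directly Hamiltonian by quasirandomness. To deal with this I would relax the construction by routing $C_i$ through a controlled set of extra vertices drawn from $V_{i+1}^+\sm V_i^+$ whose remaining degree budgets still permit an extra visit; a bookkeeping argument then ensures that the cumulative contributions of $C_1,\ldots,C_k$ still produce $d_F(v)=D(v)$. A secondary technical issue is preserving quasirandomness across all $k$ iterations, which is ensured by choosing $\eta\ll\eps$ so that the cumulative $O(\eta n^2)$ edge removals remain within the $\eps$-tolerance of the hypothesis.
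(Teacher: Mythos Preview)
Your overall strategy coincides with the paper's: peel off cycles through the current maximum-degree vertices until the graph becomes regular, then apply the K\"uhn--Osthus Hamilton decomposition theorem. The difficulty lies entirely in the peeling phase, and here your argument has a genuine gap.

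Taking $C_i$ to be a Hamilton cycle of $G_{i-1}[V_i^+]$ does not work in general. First, $|V_i^+|$ can be far below the quasirandomness threshold for \emph{every} $i$: if the degree sequence has one vertex of each even degree in $\{\delta(G)+2,\ldots,\Delta(G)\}$ and all remaining $n-k$ vertices of degree $\delta(G)$, then $|V_i^+|=i\leq k\leq\eta n/2$ throughout, so neither lower-$\eps$-regularity nor Hamiltonicity is available. Your proposed fix of borrowing from $V_{i+1}^+\sm V_i^+$ gives at most one extra vertex per round in this example, which is useless. Second, even when $|V_i^+|$ is large, lower-$(p,\eps)$-regularity of $G$ gives no lower bound on the degree of an individual vertex into $V_i^+$; a vertex $v\in V_i^+$ may have almost all its neighbours in $V\sm V_i^+$, so the minimum-degree hypothesis of the Hamiltonicity lemma can fail for $G_{i-1}[V_i^+]$. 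Third, once you borrow vertices from outside $V_i^+$ you lose the invariant $\delta(G_i)=\delta(G)$, so $G'$ need not be $\delta(G)$-regular and your cycle count $k+\delta(G)/2$ breaks down; the ``bookkeeping argument'' you allude to would have to redo the entire termination analysis.

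The paper resolves all three issues at once with a single device: before starting, take a random bipartition $V(G)=S_1\cup S_2$ such that every vertex has linear degree into both parts (an easy Chernoff argument). In round $i$ one takes $C_i$ to be a Hamilton cycle of $G_{i-1}[S_j\cup Z_i]$, where $Z_i$ is the current maximum-degree set and $j$ alternates between $1$ and $2$. This guarantees that the host of $C_i$ always has at least $n/3$ vertices and linear minimum degree, so Hamiltonicity follows directly. Since any vertex not in $Z_i$ lies in only one of $S_1,S_2$, it is touched at most every other round, hence $\delta(G_i)\geq\delta(G)-i$ while $\Delta(G_i)=\Delta(G)-2(i-1)$; thus $\Delta-\delta$ shrinks and the process becomes regular after at most $2\eta n$ rounds. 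The resulting regular graph has degree possibly below $\delta(G)$ but still $\Theta(n)$, and the total cycle count is $(k-1)+r=\Delta(G)/2$ because every removed cycle contains all current maximum-degree vertices.
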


This confirms the following conjecture of Haj\'os (see \cite{L}) for quasirandom graphs (with room to spare): Every Eulerian graph on $n$ vertices has a decomposition into $\lfloor n/2\rfloor$ cycles. (It is easy to see that this conjecture implies the Erd\H{o}s-Gallai conjecture.)\COMMENT{In fact, they are equivalent if one replaces the bound in Haj\'os' conjecture by $\O(n)$.}

We will also apply our approach to edge colourings of dense quasirandom graphs. Recall that in general it is NP-complete to decide whether a graph $G$ has chromatic index $\Delta(G)$ or $\Delta(G)+1$ (see for example \cite{Ho}). We will show that for dense quasirandom graphs of even order this decision problem can be solved in quadratic time without being trivial. 
For this, call a subgraph $H$ of $G$ \emph{overfull} if $e(H)>\Delta(G)\lfloor|H|/2\rfloor$. Clearly, if $G$ contains any overfull subgraph, then $\chi'(G)=\Delta(G)+1$. The following conjecture is known as the overfull subgraph conjecture and dates back to 1986\COMMENT{actually 1985, conference in Denmark}.

\begin{conj}[Chetwynd, Hilton \cite{CH}]
A graph $G$ on $n$ vertices with $\Delta(G)>n/3$ satisfies $\chi'(G)=\Delta(G)$ if and only if $G$ contains no overfull subgraph.
\end{conj}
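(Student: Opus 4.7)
The displayed statement is the full overfull subgraph conjecture of Chetwynd and Hilton, which is open in general; the paragraph immediately preceding it promises only a quasirandom analogue, so my plan is to sketch that analogue rather than attack the open problem. Fix $0<p<1$ and small $\eps,\eta>0$, and consider a lower-$(p,\eps)$-regular graph $G$ of even order $n$ with $\Delta(G)-\delta(G)\leq\eta n$; the goal is to show $\chi'(G)=\Delta(G)$ if and only if $G$ contains no overfull subgraph. The easy direction is standard: any proper edge-colouring uses at most $\lfloor|H|/2\rfloor$ edges per colour on $H\subseteq G$, so an overfull $H$ forces $\chi'(G)\geq\chi'(H)>\Delta(G)$.

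For the converse, my plan is to enlarge $G$ to a $\Delta(G)$-regular robust expander $G^*$ on the same vertex set and then apply the K\"uhn--Osthus Hamilton decomposition theorem that drives the other results in the paper. Write $\Delta:=\Delta(G)$ and $r(v):=\Delta-d_G(v)$. I seek a simple graph $F$ on $V(G)$, edge-disjoint from $G$, realising the degree sequence $(r(v))_{v\in V(G)}$; then $G^*:=G\cup F$ is $\Delta$-regular. Such an $F$ exists in the complement $\overline{G}$ iff an Erd\H{o}s--Gallai-type inequality holds for every $S\In V(G)$. Because $\overline{G}$ is lower-$(1-p,\eps)$-regular and $r(v)\leq\eta n$ for every $v$, the only potentially failing inequalities are those coming from relatively large $S$; a short edge-count converts any such failure into a subgraph of $G$ with more than $\Delta\lfloor|S|/2\rfloor$ edges, that is, into an overfull subgraph of $G$ — ruled out by hypothesis. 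Since $e(F)=\tfrac12\sum_v r(v)\leq\eta n^2/2$, the graph $G^*$ remains lower-$(p,\eps')$-regular and hence a robust expander.

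Once $G^*$ is a $\Delta$-regular robust expander on an even number of vertices, the K\"uhn--Osthus theorem yields a Hamilton decomposition (if $\Delta$ is even) or a decomposition into Hamilton cycles plus a single perfect matching (if $\Delta$ is odd). Because $n$ is even, each Hamilton cycle splits into two perfect matchings, so $G^*$ admits a proper edge-$\Delta$-colouring, which restricts to one of $G$. The main obstacle will be the joint construction of $F$: translating the combinatorial overfull hypothesis into the precise realisability criterion for a degree-constrained subgraph of $\overline{G}$, and simultaneously arranging $F$ to be sufficiently well-distributed that robust expansion survives. Getting this step clean is the heart of the argument, and it is also what underlies the paper's claim that deciding whether $\chi'(G)=\Delta(G)$ can be done in quadratic time: one need only check the overfull condition on candidate vertex subsets, rather than search for colourings directly.
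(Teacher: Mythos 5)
The statement you are asked about is the Chetwynd--Hilton overfull subgraph conjecture itself, which is an open problem; you are right that the paper only establishes a quasirandom analogue (Theorem~\ref{thm:colouring}), and you correctly redirect your effort there.

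Your plan, however, differs genuinely from the paper's. You propose to \emph{add} a simple graph $F\subseteq\overline{G}$ realising the deficiency sequence $r(v)=\Delta(G)-d_G(v)$, obtaining a $\Delta$-regular robust expander $G^\ast=G\cup F$ on the even vertex set, Hamilton-decompose $G^\ast$, split each cycle into two perfect matchings, and restrict the colouring to $G$. The paper instead \emph{removes} structure: it uses Hakimi's theorem to realise the deficiency sequence as an abstract auxiliary \emph{multigraph} $A$ (not embedded anywhere), partitions $E(A)$ into small matchings $M_1,\dots,M_k$, and iteratively extracts spanning linear forests $F_i$ from $G$ whose leaves are precisely the vertices of $M_i$, via Lemma~\ref{lem:pathcontrol}. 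After $k$ steps the remainder $G_k$ is regular and can be Hamilton-decomposed. The advantage of this route is that realisability as a multigraph is a single, trivially checkable inequality --- $\mathrm{def}_1\leq\sum_{i>1}\mathrm{def}_i$ plus parity --- which the no-overfull-subgraph hypothesis delivers directly by examining $G-x_1$, and finding spanning linkages in a quasirandom graph is cheap.

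The gap in your sketch is precisely at the step you flag as the heart of the argument, but the problem is more serious than you suggest. The existence of a \emph{simple} graph $F$ with prescribed degrees \emph{inside a fixed host graph} $\overline{G}$ is a degree-constrained subgraph (or $f$-factor) problem; it is \textbf{not} characterised by the Erd\H{o}s--Gallai inequality, which only classifies degree sequences realisable as an abstract simple graph on a free vertex set. To embed $F$ into $\overline{G}$ you would need a Tutte/Lov\'asz-type $f$-factor criterion quantified over pairs of disjoint vertex sets, and converting a failure of that criterion into an overfull subgraph of $G$ is not the short edge-count you indicate. This is exactly the complication the paper's multigraph device is designed to avoid: since $A$ need not be a subgraph of anything, Hakimi's simple necessary-and-sufficient condition suffices, and the deficiency is discharged by deleting well-chosen paths from $G$ rather than by inserting edges into $\overline{G}$. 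If you wish to pursue your route, you should either invoke and verify an $f$-factor theorem in $\overline{G}$ (using that $\overline{G}$ is lower-$(1-p,\eps)$-regular, $r(v)\leq\eta n$ everywhere, and that the Hakimi-type inequality holds), or switch to the paper's subtractive strategy.
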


This conjecture implies the $1$-factorization conjecture, that every regular graph of sufficiently high degree and even order can be decomposed into perfect matchings, which was recently proved for large graphs in \cite{CBKLOT}. Minimum degree conditions under which the overfull subgraph conjecture is true were first investigated in \cite{BHV,P}. (We refer to \cite{SSTF} for a more thorough discussion of the area.) We prove the overfull subgraph conjecture for quasirandom graphs of even order, even if the maximum degree is smaller than stated in the conjecture, as long as it is linear.

\begin{theorem} \label{thm:colouring}
For all $0<p<1$ there exist $\eps,\eta>0$ such that for sufficiently large $n$, the following holds: Suppose $G$ is a lower-$(p,\eps)$-regular graph on $n$ vertices and $n$ is even. Moreover, assume that $\Delta(G)-\delta(G)\leq\eta n$. Then $\chi'(G)=\Delta(G)$ if and only if $G$ contains no overfull subgraph. Further, there is a polynomial time algorithm which finds an optimal colouring.
\end{theorem}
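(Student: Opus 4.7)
The ``only if'' direction is immediate: if $H\In G$ is overfull, then any matching of $G$ contains at most $\lflr|H|/2\rflr$ edges of $H$, so $\chi'(G)\ge e(H)/\lflr|H|/2\rflr>\Delta(G)$, and Vizing's theorem forces $\chi'(G)=\Delta(G)+1$.

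For the ``if'' direction, suppose $G$ has no overfull subgraph. The plan is to realise $G$ as a spanning subgraph of a $\Delta(G)$-regular graph $G^+$ on $V(G)$, to $1$-factorise $G^+$ into perfect matchings $M_1,\dots,M_{\Delta(G)}$, and to colour each $e\in E(G)$ by the index of the unique $M_i$ containing $e$; this yields a proper $\Delta(G)$-edge-colouring of $G$, and combined with the trivial lower bound gives $\chi'(G)=\Delta(G)$. The first task is to construct $F:=G^+\sm G$ inside $\overline{G}$ with $d_F(v)=\Delta(G)-d_G(v)\le\eta n$. The degree sum $n\Delta(G)-2e(G)$ is even because $n$ is even, and realisability of the degree sequence $(d_F(v))_v$ in the quasirandom host $\overline{G}$ is established by a greedy/Erd\H{o}s--Gallai style argument, all target degrees being at most $\eta n$ while $\overline{G}$ is dense and lower-$(1-p,\eps)$-regular.

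The second task is to $1$-factorise $G^+$, to which I would apply the K\"uhn--Osthus theorem on Hamilton decompositions of robust expanders. Since $G$ is lower-$(p,\eps)$-regular and $G\In G^+$, so is $G^+$, and as $G^+$ is $\Delta(G)$-regular and close to quasirandom it is a robust expander in the required sense. When $\Delta(G)\ge n/2$ (i.e.\ $p\gtrsim 1/2$), the theorem yields a Hamilton decomposition of $G^+$, together with a single perfect matching when $\Delta(G)$ is odd. Since $n$ is even, each Hamilton cycle splits into two perfect matchings, producing the required $1$-factorisation into $\Delta(G)$ perfect matchings. The main obstacle is the regime $p<1/2$, where $\delta(G^+)=\Delta(G)<n/2$ and the K\"uhn--Osthus theorem does not apply to $G^+$ directly. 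To overcome this I would work in the complement, where $\delta(\overline{G})\ge(1-p-\eps)n>n/2$: apply the theorem to a suitable regular extension of $\overline{G}$ to extract $n-1-\Delta(G)$ edge-disjoint perfect matchings of $\overline{G}$, remove them from $K_n$, and observe that the resulting $\Delta(G)$-regular graph is an admissible $G^+$ whose $1$-factorisation is then obtained from the decomposition on the complement side.

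For the algorithmic claim, overfull subgraph detection can be made quadratic because the quasirandomness and near-regularity conditions force any overfull $H$ to have order close to $n$ and near-maximal density, reducing the search to $O(n)$ natural candidates (obtained by removing a single vertex or a small correction set); the construction of $F$ is an explicit greedy procedure; and the K\"uhn--Osthus Hamilton decomposition is polynomial-time via the algorithmic version implicit in \cite{CBKLOT}. The most delicate point will be turning the complement-based construction for small $p$ into an explicit polynomial-time algorithm, but the underlying primitives (degree realisation, Hamilton decomposition of a robust expander) all remain polynomial.
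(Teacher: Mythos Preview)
Your ``only if'' direction is fine. The ``if'' direction, however, rests on a step that fails in general: you cannot always extend $G$ to a $\Delta(G)$-regular simple graph $G^+$ on the \emph{same} vertex set. The assertion that $\overline{G}$ is lower-$(1-p,\eps)$-regular is false; lower-$(p,\eps)$-regularity is a one-sided condition (only a \emph{lower} bound on bipartite edge counts), and every spanning supergraph of a lower-$(p,\eps)$-regular graph is again lower-$(p,\eps)$-regular. In particular $G$ may be arbitrarily dense, so $\overline{G}$ can be extremely sparse.

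Here is a concrete obstruction. Let $n$ be even, and let $\overline{G}$ consist of the four edges $ua,ub,vc,vd$ together with a perfect matching on the remaining $n-6$ vertices (so $u,v$ have degree~$2$ in $\overline{G}$, all other vertices degree~$1$, and $uv\notin E(\overline{G})$). Then in $G$ we have $\Delta(G)=n-2$, $d_G(u)=d_G(v)=n-3$, and all other degrees equal $n-2$; thus $\Delta(G)-\delta(G)=1$, and $G$ is lower-$(p,\eps)$-regular for any fixed $p<1$ once $n$ is large. One checks that $\mbox{def}_G(u)=\mbox{def}_G(v)=1$ and all other deficiencies vanish, so $G$ has no overfull subgraph (indeed $G-u$ has exactly $(n-2)^2/2=\Delta(G)\lfloor(n-1)/2\rfloor$ edges). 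To extend $G$ to a $\Delta(G)$-regular graph on $V(G)$ you would need $F\subseteq\overline{G}$ with $d_F(u)=d_F(v)=1$ and $d_F(x)=0$ otherwise, i.e.\ $F=\{uv\}$; but $uv\in E(G)$, so no such $F$ exists. Your alternative complement-side construction also fails here: it asks for a perfect matching of $\overline{G}$, but $a$ and $b$ are both adjacent only to $u$, so $\overline{G}$ has no perfect matching.

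The paper sidesteps this entirely by working \emph{inside} $G$ rather than extending it. Using Hakimi's theorem, one realises the deficiency sequence as the degree sequence of an auxiliary multigraph $A$ on $V(G)$; the hypothesis ``no overfull subgraph'' is used precisely to verify Hakimi's condition $\mbox{def}_1\le\sum_{i>1}\mbox{def}_i$. One then partitions $E(A)$ into small matchings and, for each matching, removes from $G$ a spanning linear forest whose leaves are exactly the matched vertices (found via the lower-regularity of $G$). After removing all these forests, the remaining graph is $(\Delta(G)-2k)$-regular and still a robust expander, so Theorem~\ref{thm:hamdecrobexpundirected} gives a Hamilton decomposition; each Hamilton cycle and each removed forest is then $2$-coloured. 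Note also that Theorem~\ref{thm:hamdecrobexpundirected} requires only linear degree and robust expansion, not $r\ge n/2$, so your case distinction on $p$ would in any case be unnecessary.
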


At the first glance, the overfull subgraph criterion seems not very helpful in terms of time complexity, as it involves all subgraphs of $G$. (On the other hand, Niessen \cite{N} proved that in the case when $\Delta(G)\geq|G|/2$ there is a polynomial time algorithm which finds all overfull subgraphs.) Our proof of Theorem~\ref{thm:colouring} will actually yield a simple criterion whether $G$ is class $1$ or class $2$. Moreover, the proof is constructive, thus using appropriate running time statements for our tools, this yields a polynomial time algorithm which finds an optimal colouring.

The condition of $n$ being even is essential for our proof as we will colour Hamilton cycles with two colours each. It would be interesting to obtain a similar result for graphs of odd order.

\begin{conj} \label{conj:odd}
For every $0<p<1$ there exist $\eps,\eta>0$ and $n_0\in\bN$ such that the following holds. Whenever $G$ is a lower-$(p,\eps)$-regular graph on $n\geq n_0$ vertices, where $n$ is odd, and $\Delta(G)-\delta(G)\leq\eta n$, then $\chi'(G)=\Delta(G)$ if and only if $\sum_{x\in V(G)}(\Delta(G)-d_G(x))\geq\Delta(G)$.
\end{conj}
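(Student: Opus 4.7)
The necessity of the condition is immediate. If $\chi'(G)=\Delta(G)$, fix an optimal edge-colouring with matchings $M_1,\ldots,M_{\Delta(G)}$. Since $n$ is odd, each $M_i$ misses at least one vertex, and therefore
\[
\sum_{x\in V(G)}\bigl(\Delta(G)-d_G(x)\bigr)\;=\;n\Delta(G)-2e(G)\;=\;\sum_{i=1}^{\Delta(G)}\bigl(n-2|M_i|\bigr)\;\ge\;\Delta(G).
\]

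For the sufficiency, my plan is to parallel the proof of Theorem~\ref{thm:colouring} with Hamilton cycles replaced by Hamilton \emph{paths}, exploiting the fact that a Hamilton path on $n$ (odd) vertices has the even number $n-1$ of edges and therefore $2$-edge-colours into two matchings of size $(n-1)/2$, each missing one of its two endpoints. Concretely, assume $\Delta$ is even and $D:=\sum_x(\Delta-d_G(x))$ equals $\Delta$. It then suffices to decompose $G$ into $\Delta/2$ Hamilton paths in which each vertex $x$ appears as an endpoint of exactly $f(x):=\Delta-d_G(x)$ of them: $2$-colouring each path yields $\Delta$ matchings, and each $x$ sits internally in $\Delta/2-f(x)$ paths (contributing $2$ to its colour-degree per path) and as an endpoint in $f(x)$ paths (contributing $1$), giving colour-degree $\Delta-f(x)=d_G(x)$, as required.

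To produce such a Hamilton path decomposition, I would attach an auxiliary vertex $v^{\ast}$ to each $x\in V(G)$ by $f(x)$ parallel edges, forming a $\Delta$-regular multigraph $G^{\ast}$ on the even number $n+1$ of vertices. A Hamilton decomposition of $G^{\ast}$, obtained via a multigraph adaptation of the K{\"u}hn-Osthus theorem on Hamilton decompositions of robust expanders (the tool driving the even-order results of this paper), translates after deletion of $v^{\ast}$ into a Hamilton path decomposition of $G$ with precisely the prescribed endpoint multiplicities: in each Hamilton cycle of $G^{\ast}$, the vertex $v^{\ast}$ lies between two neighbours $a,b\in V(G)$, and the remaining edges form a Hamilton path in $G$ from $a$ to $b$, so the multiplicity $f(x)$ of $v^{\ast}x$-edges matches the number of paths having $x$ as endpoint. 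The case $D>\Delta$ should first be reduced to $D=\Delta$ by adding $(D-\Delta)/2$ auxiliary edges to $G$ inside $S:=\set{x\in V(G)}{f(x)>0}$ avoiding existing edges of $G$; the condition $D\ge\Delta=\Theta(pn)$ together with $f(x)\le\eta n$ forces $S$ to be sufficiently large, and the density $1-p$ of the complement $\bar G$ combined with a greedy argument should supply enough non-edges. Odd $\Delta$ can be handled by first extracting one near-perfect matching covering all vertices of maximum degree and applying the argument to the remainder.

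The principal obstacle is that the K{\"u}hn-Osthus theorem is stated for simple graphs, whereas $G^{\ast}$ may have multiplicities up to $\eta n$ at $v^{\ast}$. One approach is to split $v^{\ast}$ into several copies indexed by $1,\ldots,\max_x f(x)$ and rebalance their degrees by adding an auxiliary graph among the copies, but controlling this graph while preserving robust expansion and keeping the number of copies bounded is delicate. An alternative is to adapt the absorption/expansion proof of the K{\"u}hn-Osthus theorem directly to multigraphs of bounded multiplicity, since the underlying arguments appear largely robust to such modifications. Resolving this multigraph issue cleanly seems to be the crux of the conjecture.
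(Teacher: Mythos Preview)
The statement you are addressing is Conjecture~\ref{conj:odd}; the paper does \emph{not} prove it. The authors explicitly pose it as an open problem, remarking that ``the condition of $n$ being even is essential for our proof as we will colour Hamilton cycles with two colours each'' and that ``it would be interesting to obtain a similar result for graphs of odd order.'' So there is no proof in the paper to compare your attempt against.

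Your necessity argument is correct and is essentially the observation, noted after the conjecture in the paper, that the deficiency condition is equivalent to $G$ itself not being overfull.

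Your sufficiency plan is a natural adaptation of the even-order strategy: replace Hamilton cycles by Hamilton paths (which $2$-colour into two near-perfect matchings when $n$ is odd), and manufacture the required endpoint multiplicities by attaching an auxiliary vertex $v^\ast$ with $f(x)=\Delta-d_G(x)$ parallel edges to each $x$. The bookkeeping you describe for the colour-degrees, the reduction from $D>\Delta$ to $D=\Delta$ via complement edges, and the parity reduction via a near-perfect matching avoiding one non-maximum-degree vertex are all plausible and in the spirit of the paper's techniques. You have also correctly identified the genuine obstruction: Theorem~\ref{thm:hamdecrobexp} and Theorem~\ref{thm:hamdecrobexpundirected} are stated for simple (di)graphs, whereas $G^\ast$ has edge multiplicities up to $\eta n$ at $v^\ast$. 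Neither of your two suggested workarounds (splitting $v^\ast$ into copies, or reproving the robust-expander Hamilton decomposition for bounded-multiplicity multigraphs) is straightforward, and this is precisely why the statement remains a conjecture. In short, your proposal is a coherent outline of what one would \emph{want} to do, and you are right that the multigraph issue is the crux; but it is not a proof, and the paper does not supply one either.
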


Note that the condition $\sum_{x\in V(G)}(\Delta(G)-d_G(x))\geq\Delta(G)$ in Conjecture~\ref{conj:odd}\COMMENT{that is, if $n$ is odd} is equivalent to the requirement that $G$ itself is not overfull. Also note that the corresponding question for $\gnp$ is easily solved if $p$ does not tend to $0$ or $1$ too quickly: It is well-known that in this case \whp $G\sim\gnp$ satisfies $\chi'(G)=\Delta(G)$, which follows from the fact that \whp $G$ has a unique vertex of maximum degree.\COMMENT{and Vizing's adjacency lemma}

This paper is organised as follows. In the next section, we will introduce the basic terminology. Section~\ref{sec:tools} discusses lower-$\eps$-regularity, robust expansion and collects some probabilistic results that we will use later on. It also introduces our main tool, namely that robust expanders of linear degree have a Hamilton decomposition, which was recently proven in \cite{KO:13}. In Section~\ref{sec:cycles} we will prove Theorem~\ref{thm:Eulerian plain}. Building on this, we will prove a quasirandom version of Theorem~\ref{thm:gnp}(ii) and (iii) in Section~\ref{sec:paths}, which altogether implies Theorem~\ref{thm:gnp}. 
Section~\ref{sec:forests} contains two additional results concerning linear arboricity.
The last section is devoted to edge colouring, i.e. the proof of Theorem~\ref{thm:colouring}.

\section{Notation} \label{sec:notation}

All graphs considered in this paper are finite and do not contain loops. A graph with parallel edges is referred to as a multigraph. 
Let $G$ be a graph, multigraph or digraph. As usual, we let $V(G)$ and $E(G)$ denote the vertex set and edge set, respectively. We further let $|G|:=|V(G)|$ and $e(G):=|E(G)|$. Given $U\In V(G)$, $G[U]$ denotes the subgraph of $G$ induced by $U$, and $G-U:=G[V(G)\sm U]$. If $F\In E(G)$, then let $G\sm F$ be obtained from $G$ by removing all edges in $F$, and if $H$ is a subgraph of $G$, then $G\sm H:=G\sm E(H)$. Moreover, if $G$ is a graph and $F$ a set of edges in the complement of $G$, then we let $G\cup F$ denote the graph obtained from $G$ by adding the edges in $F$.

Given a graph $G$, the degree of a vertex $v$ is denoted by $d_G(v)$, and the set of all neighbours of $v$ is denoted by $N_G(v)$. For a set $S\In V(G)$, we define $d_S(v):=|N_G(v)\cap S|$.
For $S,T\In V(G)$ disjoint, let $e_G(S,T)$ be the number of edges in $G$ between $S$ and $T$.

For a digraph $G$, let $N_G^+(v)$ and $N_G^-(v)$ denote the outneighbourhood and inneighbourhood of a vertex $v$, respectively. Furthermore, $d_G^+(v)$ is the outdegree of $v$ and $d_G^-(v)$ the indegree. We also set $d_G(v):=d_G^+(v)+d_G^-(v)$ and then define $\Delta(G):=\max_{v\in V(G)}d_G(v)$ and $\delta(G):=\min_{v\in V(G)}d_G(v)$.
A digraph $G$ is called \emph{Eulerian} if $d_G^+(v)=d_G^-(v)$ for every vertex $v$, and \emph{$r$-regular} if $d_G^+(v)=d_G^-(v)=r$ for every vertex $v$. The \emph{minimum semidegree} of $G$ is defined as $\delta^0(G):=\min\Set{\delta^+(G),\delta^-(G)}$, where $\delta^\pm(G):=\min_{v\in V(G)}d_G^{\pm}(v)$.
For $S,T\In V(G)$ disjoint, let $e_G(S,T)$ be the number of edges in $G$ directed from $S$ to $T$.
Given a fixed digraph $G$, for a vertex $v$ and a set $S\In V(G)$, we define $d_S^{\pm}(v):=|N_G^{\pm}(v)\cap S|$. Paths and cycles in digraphs are always supposed to be directed.

If $G$ is a multigraph and $v\in V(G)$, then $d_G(v)$ counts the number of edges $v$ is incident with, where parallel edges are counted with multiplicity.

We denote by $\gnp$ the binomial random graph on $n$ vertices, that is, the random graph which is generated by including each of the $\binom{n}{2}$ possible edges independently with probability $p$. If a random variable $X$ has binomial distribution with parameters $n$ and $p$, we write $X\sim Bin(n,p)$.
We say that a property $P=P(n)$ holds \emph{asymptotically almost surely (a.a.s.)} if the probability that $P$ holds tends to $1$ as $n$ tends to infinity.
For the sake of readability, we assume large quantities to be integers whenever this does not affect the argument.

We will also make use of the following notation: $0<a \ll b\le 1$. More precisely, if we claim that a statement is true whenever $0<a \ll b\le 1$, then this means that there exists a non-decreasing function $f\colon (0,1] \rightarrow (0,1]$ such that the statement holds for all $0<a,b\le 1$ satisfying $a\le f(b)$.
Intuitively, this means that for every $b>0$, the statement is true provided that $a$ is sufficiently small compared to $b$.

\section{Tools and preliminary results} \label{sec:tools}

\subsection{Quasirandomness}

Here we collect some basic properties of lower-$(p,\eps)$-regular graphs and digraphs. Let $0<\eps,p<1$. A (di-)graph $G$ on $n$ vertices is called \emph{lower-$(p,\eps)$-regular} if we have $$e_G(S,T)\geq(p-\eps)|S||T|$$ for all disjoint $S,T\In V(G)$ with $|S|,|T|\geq\eps n$. 
Note that $p$ cannot be viewed as the density of $G$, as every spanning supergraph of a lower-$(p,\eps)$-regular graph is also lower-$(p,\eps)$-regular.

The following proposition says that we can modify a lower-$\eps$-regular graph slightly such that it is still lower-$\eps'$-regular. Its proof is straightforward using the definition of lower-$\eps$-regularity. 

\begin{prop} \label{prop:changes}

Let $0<1/n_0\ll\eps,p<1$. Let $G$ be a lower-$(p,\eps)$-regular (di-)graph on $n\geq n_0$ vertices. Then the following hold:

\begin{itemize}

\item [(i)] If $G'$ is obtained from $G$ by adding a new vertex $w$ and arbitrary edges at $w$, then $G'$ is lower-$(p,2\eps)$-regular. \COMMENT{Let $|S|,|T|\geq2\eps n$. Let $S':=S-w$ and $T':=T-w$. $$e_{G'}(S,T)\geq e_{G}(S',T')\geq (p-\eps)|S'||T'|\geq(p-2\eps)|S||T|.$$}

\item [(ii)] Let $H$ be a graph on $V(G)$ such that $\Delta(H)\leq\eta n$. Let $\eps':=\max\Set{2\eps,2\sqrt{\eta}}$. Then $G\sm H$ is lower-$(p,\eps')$-regular.\COMMENT{For disjoint sets $S,T$ with $|S|,|T|\geq\eps'n$, we have $e_{G'}(S,T)\geq e_G(S,T)-|S|\eta n\geq (p-\eps)|S||T|-\sqrt{\eta}|S||T|\geq(p-\eps')|S||T|.$}

\item [(iii)] If $U\In V(G)$ has size at least $\beta n$, then $G[U]$ is lower-$(p,\eps/\beta)$-regular.

\end{itemize}
\end{prop}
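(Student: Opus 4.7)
All three parts are direct verifications from the definition of lower-$(p,\eps)$-regularity: given disjoint sets $S,T$ witnessing the new regularity threshold, I will pass to closely related sets $S',T'$ in $G$ that witness the old threshold, apply the hypothesis, and then absorb the small discrepancy into the additive slack. There is no genuine obstacle here; the only care needed is to check that the reduced sets are still large enough (at least $\eps n$) and that the loss in edges is bounded by the desired slack term $(\eps'-\eps)|S||T|$.

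For (i), take disjoint $S,T\In V(G')$ with $|S|,|T|\geq 2\eps n$ and set $S':=S\setminus\{w\}$, $T':=T\setminus\{w\}$. Since at most one of these loses one vertex, both have size at least $2\eps n-1\geq \eps n$ (using $1/n_0\ll\eps$), so lower-$(p,\eps)$-regularity of $G$ yields $e_{G'}(S,T)\geq e_G(S',T')\geq (p-\eps)|S'||T'|$. A short estimate comparing $(p-\eps)|S'||T'|$ with $(p-2\eps)|S||T|$, again using that $|S|,|T|$ are much larger than $1$, finishes the part.

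For (ii), take disjoint $S,T\In V(G)$ with $|S|,|T|\geq \eps' n\geq \eps n$ and apply the hypothesis to obtain $e_G(S,T)\geq (p-\eps)|S||T|$. Deleting $E(H)$ costs at most $|S|\Delta(H)\leq \eta n |S|$ edges between $S$ and $T$. Since $|T|\geq \eps' n\geq 2\sqrt{\eta}\, n$, the loss is bounded by $\sqrt{\eta}\,|S||T|\leq (\eps'-\eps)|S||T|$, as required. Part (iii) is the cleanest: any disjoint $S,T\In U$ with $|S|,|T|\geq (\eps/\beta)|U|$ automatically satisfy $|S|,|T|\geq (\eps/\beta)\cdot\beta n=\eps n$, so the lower-$(p,\eps)$-regularity of $G$ on $V(G)$ applies verbatim to $G[U]$.

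In each case the statement, the digraph version, is identical: the argument only uses the inequality $e_G(S,T)\geq(p-\eps)|S||T|$ and never the symmetry of $G$, so the directed and undirected proofs are the same. Since no step uses more than the definition plus elementary arithmetic, I expect the entire proof to be just a few lines, with the only minor point being the choice of slack in~(ii) that motivates the definition $\eps':=\max\{2\eps,2\sqrt{\eta}\}$.
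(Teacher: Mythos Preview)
Your proposal is correct and matches the paper's approach exactly: the paper simply remarks that the proof is ``straightforward using the definition of lower-$\eps$-regularity'' and records precisely the one-line verifications you give (passing to $S\setminus\{w\}$, $T\setminus\{w\}$ for (i); bounding the loss by $|S|\eta n\leq\sqrt{\eta}|S||T|$ for (ii); and noting $|S|,|T|\geq(\eps/\beta)|U|\geq\eps n$ for (iii)). There is nothing to add.
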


\begin{prop} \label{prop:minsemideg}
Let $0<1/n_0\ll\eps,\eta \ll p<1$. Let $G$ be a lower-$(p,\eps)$-regular digraph on $n\geq n_0$ vertices. Assume that $G$ is Eulerian and $\Delta(G)-\delta(G)\le \eta n$. Then $\delta^0(G)\ge pn/2$.
\end{prop}

\proof
Consider a partition of $V(G)$ into sets $S,T$ such that $|S|=\eps n$ and $|T|=(1-\eps)n$. We have $e_G(S,T)\geq(p-\eps)|S||T|$ and $e_G(S,T)\leq|S|\Delta(G)/2$. Thus, $\Delta(G)\geq2(p-\eps)(1-\eps)n$, implying $\delta^0(G)=\delta(G)/2\geq(p-\eps)(1-\eps)n-\eta n/2\geq pn/2$.
\endproof

The next lemma states that in a lower-$(p,\eps)$-regular digraph with linear minimum semidegree, we can always find a Hamilton cycle. This is a folklore observation for $\eps$-regularity, but in fact, carries over to lower-$\eps$-regular graphs and digraphs. It can be seen either by using the fact that every lower-$\eps$-regular digraph is a robust outexpander (see Subsection~\ref{subsec:robexp}) and then applying a result from \cite{KOT}, or directly by considering a longest directed path and using the lower-$\eps$-regularity to find a cycle on the same vertex set, which must then be a Hamilton cycle. Using the latter approach, it is moreover easy to see that a Hamilton cycle can be found in polynomial time: Start with any path and extend it in both directions until all inneighbours of the startvertex and all outneighbours of the endvertex lie on the path. Then use the lower-$\eps$-regularity to find a cycle on the same vertex set. If this is not a Hamilton cycle, modify the cycle into a path on more vertices and proceed as above.

\begin{lemma} \label{lem:hamcycle}
Let $0<1/n_0\ll\eps\ll\alpha,p<1$.
\begin{itemize}
\item[(i)] Let $G$ be a lower-$(p,\eps)$-regular digraph on $n\geq n_0$ vertices such that $\delta^0(G)\geq\alpha n$. Then $G$ contains a Hamilton cycle.
\item[(ii)] Let $G$ be a lower-$(p,\eps)$-regular graph on $n\geq n_0$ vertices such that $\delta(G)\geq\alpha n$. Then $G$ contains a Hamilton cycle.
\item[(iii)] Let $G$ be a lower-$(p,\eps)$-regular graph on $n\geq n_0$ vertices such that $\delta(G)\geq\alpha n$. Then for all distinct $a,b\in V(G)$, there exists a Hamilton path joining $a$ and~$b$.
\end{itemize}
Moreover, the Hamilton cycles in (i)--(ii) and the Hamilton path in (iii) can be found in polynomial time.
\end{lemma}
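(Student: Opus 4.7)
The plan is to deduce all three parts from the Hamilton cycle theorem of K\"uhn, Osthus and Treglown \cite{KOT} on robust outexpanders of linear minimum semidegree, after a short verification that lower-$(p,\eps)$-regularity supplies robust expansion. Alongside this, I would sketch the direct rotation-extension argument hinted at in the paragraph preceding the lemma, since that is what yields the polynomial-time algorithm most transparently.

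For part~(i), I would first verify that every lower-$(p,\eps)$-regular digraph $G$ with $\delta^0(G)\geq\alpha n$ is a robust $(\nu,\tau)$-outexpander for some $\nu=\nu(p,\alpha)>0$ and $\tau\leq 2\eps$. The key calculation is that if $S\In V(G)$ has $\tau n\leq|S|\leq(1-\tau)n$ but fewer than $|S|+\nu n$ vertices have at least $\nu n$ inneighbours in $S$, then the complementary set $T$ of vertices with small inneighbourhood into $S$ has linear size and satisfies $e_G(S,T)<\nu n|T|$, contradicting lower-regularity once $\nu\ll p-\eps$. Robust expansion in hand, \cite{KOT} delivers the Hamilton cycle. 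To obtain the polynomial-time claim I would follow the direct route: grow a directed path $P=v_1\dots v_k$ greedily until $N^-(v_1)\cup N^+(v_k)\In V(P)$, so that $k\geq\alpha n+1$; close $V(P)$ into a directed cycle by applying lower-regularity to appropriately shifted copies of $N^-(v_1)$ and $N^+(v_k)$ on $P$; and then absorb any $u\notin V(C)$ by locating consecutive arcs $v_iv_{i+1}$ of the current cycle with $v_i\in N^-(u)$ and $v_{i+1}\in N^+(u)$, which exist either by lower-regularity (while $|V(C)|$ is a positive fraction of $n$) or directly from the minimum semidegree assumption (once $|V(C)|$ is close to $n$). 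Each iteration increases $|V(C)|$, so the procedure terminates in at most $n$ steps.

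For part~(ii), I would replace each edge of $G$ by a pair of opposite arcs; the resulting digraph is still lower-$(p,\eps)$-regular and has $\delta^0\geq\alpha n$, so (i) provides a directed Hamilton cycle whose underlying undirected cycle is the desired one in $G$. For part~(iii), I would attach an auxiliary vertex $w$ to $G$ whose only two neighbours are $a$ and $b$; by Proposition~\ref{prop:changes}(i) the enlarged graph $G'$ is lower-$(p,2\eps)$-regular, and any Hamilton cycle of $G'$ must traverse $w$ via its only two incident edges, so deleting $w$ produces a Hamilton $a$--$b$-path in $G$. Since $\delta(G')=2$ at $w$, (ii) does not apply verbatim, so I would re-run the rotation-extension procedure initialised from the three-vertex path $a,w,b$, performing all extensions and rotations only at the current endpoints (and thus away from $w$), so that $w$ remains interior throughout and every structural appeal to lower-regularity takes place inside $G'-w=G$.

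The main obstacle I anticipate is the closing step of part~(i): turning a maximum directed path into a directed cycle on the same vertex set is more delicate than in the undirected setting because arcs cannot be reversed, so rather than a standard Posa rotation one needs a two-sided argument that couples the large inneighbourhood of $v_1$ on $P$ with the large outneighbourhood of $v_k$ on $P$ through lower-regularity applied to a suitable pair of shifted sets on $P$. Once this closing step is handled, the subsequent absorption phase and the reductions in parts (ii) and (iii) are comparatively routine.
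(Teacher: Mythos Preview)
Your treatment of parts (i) and (ii) coincides with the paper's: establish robust outexpansion and invoke \cite{KOT} for existence, sketch the path-extension/closing procedure for the algorithmic claim, and deduce (ii) from (i) by replacing each edge with a pair of opposite arcs.

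For part (iii) your reduction differs from the paper's. You adjoin a degree-$2$ vertex $w$ adjacent only to $a$ and $b$; since $\delta(G')=2$ you cannot invoke (ii) as a black box and must instead run a bespoke rotation--extension in $G'$ while preventing $w$ from ever becoming an endpoint. This can be made to work---at each rotation you forbid the one pivot whose successor is $w$, and when re-opening a cycle you avoid the two edges incident to $w$---but the caretaking is extraneous. The paper's reduction is cleaner: contract $\{a,b\}$ to a single vertex, replace every remaining edge of $G$ by two opposite arcs, and at the contracted vertex direct arcs outward to $N_G(a)\setminus\{b\}$ and inward from $N_G(b)\setminus\{a\}$. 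By Proposition~\ref{prop:changes} the resulting digraph is still lower-$(p,O(\eps))$-regular, and now $\delta^0\geq\alpha n-1$, so (i) applies verbatim; any Hamilton cycle there unfolds to a Hamilton $a$--$b$ path in $G$. The contraction thus buys a genuine reduction to (i) with no low-degree exceptional vertex to shepherd through the argument, whereas your dummy-vertex approach trades that for a hand-tuned repetition of the rotation--extension proof.
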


Clearly, (i) immediately implies (ii). To deduce (iii) from (i), contract $\Set{a,b}$ into a single vertex and orient the edges at $\Set{a,b}$ from $\Set{a,b}$ towards $N_G(a)\sm\Set{b}$ and from $N_G(b)\sm\Set{a}$ towards $\Set{a,b}$. Replace all other edges by two edges which are oriented in opposite directions. By (i) and Proposition~\ref{prop:changes}, the graph thus obtained contains a Hamilton cycle, which in turn induces a Hamilton path from $a$ to $b$ in $G$.\COMMENT{Proposition~\ref{prop:changes}(iii) implies that the deletion of $\Set{a,b}$ does not destroy regularity, inserting double edges makes a regular digraph, and adding vertex $\Set{a,b}$ with appropriate edges to $N_G(a)\sm\Set{b}$ and $N_G(b)\sm\Set{a}$ is ok by Proposition~\ref{prop:changes}(i).}

\subsection{Probabilistic preliminaries}

In this subsection, the following Chernoff-Hoeffding type bound is often used, sometimes without stating the explicit calculation.

\begin{lemma}[see \cite{JLR}] \label{lem:chernoff}
Suppose that $X$ is the sum of finitely many independent indicator variables. Then, for all $0<\eps<1$,
\begin{itemize}
\item[(i)] $\prob{X\leq(1-\eps)\expn{X}}\leq\eul^{-\frac{\eps^2}{3}\expn{X}},$
\item[(ii)] $\prob{X\geq(1+\eps)\expn{X}}\leq\eul^{-\frac{\eps^2}{3}\expn{X}}.$
\end{itemize}
In particular, the above bounds hold if $X$ has binomial distribution.
\end{lemma}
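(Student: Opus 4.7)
The plan is to use the standard Chernoff exponential moment method combined with two elementary one-variable inequalities. Writing $X=\sum_{i=1}^{m}X_i$ with independent indicators $X_i$ satisfying $\pr(X_i=1)=p_i$, set $\mu:=\ex X=\sum_i p_i$. The key observation powering the whole argument is that, by independence, the moment generating function of $X$ factors.

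For the upper tail (ii), I would apply Markov's inequality to $\eul^{tX}$ for $t>0$:
\[
\pr(X\geq(1+\eps)\mu)\leq \eul^{-t(1+\eps)\mu}\,\ex[\eul^{tX}].
\]
The factorisation $\ex[\eul^{tX}]=\prod_i(1+p_i(\eul^t-1))$, combined with the pointwise bound $1+x\leq \eul^x$, gives $\ex[\eul^{tX}]\leq \eul^{\mu(\eul^t-1)}$. Optimising in $t$ (the optimum is $t=\ln(1+\eps)$) yields the exponent $-\mu\bigl((1+\eps)\ln(1+\eps)-\eps\bigr)$. For the lower tail (i), the same method applied to $-X$, with optimal parameter $t=-\ln(1-\eps)$, produces the exponent $-\mu\bigl(-\eps-(1-\eps)\ln(1-\eps)\bigr)$.

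The only step beyond routine manipulation is verifying the two elementary inequalities $(1+\eps)\ln(1+\eps)-\eps\geq \eps^2/3$ and $-\eps-(1-\eps)\ln(1-\eps)\geq \eps^2/3$ on $(0,1)$. Both follow by Taylor expansion, since each left-hand side equals $\eps^2/2+O(\eps^3)$ near zero, and one then checks that the sign of the relevant derivative keeps the bound valid throughout the interval. This is really the only mildly non-trivial obstacle, but it is entirely elementary. The final assertion is immediate, since a $\mathrm{Bin}(n,p)$ variable is by definition a sum of $n$ independent Bernoulli$(p)$ indicators, so the hypothesis of the lemma is automatically satisfied.
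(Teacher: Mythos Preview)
The paper does not give a proof of this lemma; it is quoted from \cite{JLR} as a standard Chernoff--Hoeffding bound. Your exponential-moment argument is precisely the classical proof and is correct in outline.

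There is, however, a sign slip in your lower-tail computation. The optimised exponent is
\[
\mu\bigl(-\eps-(1-\eps)\ln(1-\eps)\bigr)\;=\;-\mu\bigl(\eps+(1-\eps)\ln(1-\eps)\bigr),
\]
not $-\mu\bigl(-\eps-(1-\eps)\ln(1-\eps)\bigr)$ as you wrote. Accordingly, the elementary inequality you actually need is $\eps+(1-\eps)\ln(1-\eps)\geq \eps^2/3$ (in fact $\geq \eps^2/2$), which is true and is proved by exactly the derivative argument you sketch. The inequality you stated, $-\eps-(1-\eps)\ln(1-\eps)\geq \eps^2/3$, is false on $(0,1)$: its left-hand side has Taylor expansion $-\eps^2/2+O(\eps^3)$, not $+\eps^2/2$, so your own consistency check would have caught this. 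With the sign corrected, the argument goes through and matches the standard reference.
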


The next lemma collects some well-known properties of random graphs.

\begin{lemma} \label{lem:gnpbasics}
Let $0<\eps,p<1$ be constant. The following holds \whp for the random graph $G\sim\gnp$:
\begin{itemize}
\item[(i)] $\Delta(G)-\delta(G)\leq4\sqrt{n\log n}$,
\item[(ii)] $G$ is lower-$(p,\eps)$-regular,
\item[(iii)] $G$ has a unique vertex of maximum degree.
\end{itemize}
\end{lemma}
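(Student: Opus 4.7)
All three parts follow from standard concentration arguments; (i) and (ii) are direct applications of Lemma~\ref{lem:chernoff}, while (iii) is the main obstacle and requires the local central limit theorem.

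For (i), I would apply Lemma~\ref{lem:chernoff} to each degree $d_G(v)\sim\mathrm{Bin}(n-1,p)$, taking deviation $2\sqrt{n\log n}$ from the mean $\mu:=(n-1)p$. This gives
\[
\prob{|d_G(v)-\mu|\geq 2\sqrt{n\log n}}\leq 2\exp\!\left(-\tfrac{4n\log n}{3(n-1)p}\right)=n^{-4/(3p)+o(1)},
\]
which is $o(1/n)$ since $p<1$. A union bound over the $n$ vertices then gives $\Delta(G)-\delta(G)\leq 4\sqrt{n\log n}$ \whp.

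For (ii), fix disjoint $S,T\In V(G)$ with $|S|,|T|\geq\eps n$. Then $e_G(S,T)\sim\mathrm{Bin}(|S||T|,p)$ has mean at least $p\eps^2 n^2$, so Lemma~\ref{lem:chernoff}(i) bounds the failure probability by $\exp(-\Omega(\eps^4 n^2))$. Since there are at most $4^n$ ordered pairs $(S,T)$ of disjoint subsets of $V(G)$, a union bound yields lower-$(p,\eps)$-regularity \whp.

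Part (iii) is the main obstacle, as it cannot be handled by a single Chernoff estimate; one needs the local CLT to control the probability of ties among the largest degrees. Setting $\sigma:=\sqrt{(n-1)p(1-p)}$, my plan has two steps. First, I would show that \whp $\Delta(G)\geq k_-:=\mu+c_1\sigma\sqrt{\log n}$ for some fixed $c_1\in(\sqrt{3/2},\sqrt{2})$: the expected number of vertices with degree at least $k_-$ is of order $n^{1-c_1^2/2}/\sqrt{\log n}\to\infty$ by the normal approximation, and a Chebyshev/second-moment calculation (exploiting that any two vertex-degree events differ in only one edge, and so contribute only a negligible covariance) shows the lower bound on $\Delta(G)$ holds \whp. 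Second, for any pair $u\neq v$, the decomposition $d_G(u)=X_{uv}+D_u'$ and $d_G(v)=X_{uv}+D_v'$ with independent $X_{uv}, D_u', D_v'$ (where $D_u',D_v'\sim\mathrm{Bin}(n-2,p)$) together with the local CLT yields
\[
\prob{d_G(u)=d_G(v)=k}=O\bigl(\sigma^{-2}\exp(-(k-\mu)^2/\sigma^2)\bigr).
\]
Summing over $k\geq k_-$ and the $\binom{n}{2}$ pairs produces a total of order $n^{3/2-c_1^2+o(1)}=o(1)$ for $c_1>\sqrt{3/2}$, whence \whp no two vertices share a common degree in $[k_-,\infty)$, so $\Delta(G)$ is uniquely attained.
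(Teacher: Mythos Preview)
Your treatment of (i) and (ii) is exactly what the paper does: Chernoff on each degree with deviation $2\sqrt{n\log n}$ plus a union bound for (i), and Chernoff on $e_G(S,T)$ plus a union bound over at most $4^n$ pairs for (ii).

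For (iii) the paper does not give an argument at all; it simply cites Theorem~3.15 of Bollob\'as, \emph{Random Graphs}. Your sketch is in fact the classical proof behind that reference: locate $\Delta(G)$ in a window of the form $\mu+c\sigma\sqrt{\log n}$ via a second-moment argument, then use the local limit theorem to show that the expected number of pairs tying at any value in that window is $o(1)$. So your approach is more self-contained than the paper's, but not genuinely different from the cited source; the calculation $n^{3/2-c_1^2+o(1)}=o(1)$ for $c_1>\sqrt{3/2}$ is the right threshold. One minor point: to invoke Lemma~\ref{lem:chernoff} as stated you need the relative error $\eps=2\sqrt{n\log n}/\mu$ to lie in $(0,1)$, which holds for large $n$ since $p$ is constant, so you may want to note this explicitly.
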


Indeed, using Lemma~\ref{lem:chernoff}, it is easy to see that \whp $|d_G(x)-np|\leq2\sqrt{n\log n}$ for all $x\in V(G)$, which implies (i). Similarly, using Lemma~\ref{lem:chernoff}, it is also straightforward to show (ii).\COMMENT{We can assume that $\eps\leq p$. Let $S,T\In V(G)$ be disjoint with $s:=|S|\geq\eps n$ and $t:=|T|\geq\eps n$. Then, by using Lemma~\ref{lem:chernoff},
\begin{align*}
\prob{|e_G(S,T)-pst|>\eps st}&\leq2\eul^{-(\eps/p)^2stp/3}\leq2\eul^{-2n},
\end{align*}
and a union bound over at most $4^n$ possibilities for $S,T$ completes the proof.}
For (iii), we refer to Theorem 3.15 in \cite{B}.

The following result is a variant of the observation that a random partition of a graph splits all vertex degrees evenly. Given a (di-)graph $G$ and a matching $M$ in the complete graph on $V(G)$, we say that a set $S\In V(G)$ \emph{separates} $M$ if there exists an edge $e$ of $M$ such that $e$ has precisely one endvertex in $S$.

\begin{lemma} \label{lem:auxsets}
Let $0<1/n_0\ll\alpha<1$. Suppose that $G$ is a digraph on $n\geq n_0$ vertices with $\delta^0(G)\geq\alpha n$. Let $M$ be any matching in the complete graph on $V(G)$. Then there exists a set of vertices $S$ for which the following hold:
\begin{itemize}
\item[(i)] $n/3\leq|S|\leq 2n/3$.
\item[(ii)] Every $x\in V(G)$ satisfies $d_S^+(x),d_S^-(x),d_{\overline{S}}^+(x),d_{\overline{S}}^-(x)\geq\alpha n/6$, where $\overline{S}:=V(G)\sm S$.
\item[(iii)] $S$ does not separate $M$.
\end{itemize}
\end{lemma}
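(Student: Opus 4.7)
The plan is to choose $S$ by a random partition that keeps the matched pairs of $M$ together. Write $M=\Set{e_1,\ldots,e_k}$ with $e_i=\Set{u_i,v_i}$ and $k\leq n/2$, and let $U:=V(G)\sm V(M)$ be the set of unmatched vertices. Take mutually independent Bernoulli$(1/2)$ random variables $Z_1,\ldots,Z_k$ and $(W_u)_{u\in U}$, and let $S$ contain both endpoints of $e_i$ precisely when $Z_i=1$, and contain the unmatched vertex $u$ precisely when $W_u=1$. By construction no edge of $M$ is separated by $S$, so (iii) holds deterministically, and it remains only to verify (i) and (ii).

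For (i), observe that $|S|=\sum_i 2Z_i+\sum_u W_u$ is a sum of independent random variables each lying in $[0,2]$ with $\expn|S|=n/2$, so Hoeffding's inequality yields $\prob{\big||S|-n/2\big|>n/6}\leq 2\eul^{-cn}$ for some constant $c>0$. For (ii), fix $x\in V(G)$; setting $a_i^+(x):=|\Set{u_i,v_i}\cap N_G^+(x)|\in\Set{0,1,2}$ and $b_u^+(x):=1$ if $u\in N_G^+(x)$ and $0$ otherwise, we may write
\[
d_S^+(x)=\sum_i a_i^+(x)Z_i+\sum_u b_u^+(x)W_u.
\]
This is again a sum of independent variables bounded by $2$, with expectation $d_G^+(x)/2\geq\alpha n/2$, so Hoeffding's inequality gives $\prob{d_S^+(x)<\alpha n/6}\leq\prob{|d_S^+(x)-\expn d_S^+(x)|>\alpha n/3}\leq 2\eul^{-c'\alpha^2 n}$ for some constant $c'>0$. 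The identical estimate holds for $d_S^-(x)$, $d_{\overline{S}}^+(x)$ and $d_{\overline{S}}^-(x)$ (the latter two arising by replacing $Z_i,W_u$ with $1-Z_i,1-W_u$).

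A union bound over the $4n+1$ events above, combined with $1/n_0\ll\alpha$, shows that with probability $1-o(1)$ the random $S$ simultaneously satisfies (i) and (ii), so such a set exists. The only mild wrinkle is that matched pairs contribute jointly: the pair summand $a_i^+(x)Z_i$ can take the value $2$, so we cannot appeal directly to the $\Set{0,1}$-valued Chernoff bound of Lemma~\ref{lem:chernoff} and instead use Hoeffding's inequality for bounded independent random variables, which provides the same exponential concentration.
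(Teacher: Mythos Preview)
Your argument is correct and follows the same probabilistic strategy as the paper: choose $S$ randomly so that matched pairs stay together, then use concentration and a union bound. The one technical difference is that the paper first enlarges $M$ to a matching covering all but at most one vertex; this makes every relevant count a genuine binomial random variable (by restricting attention to one side of the matching), so the paper can stay within the indicator-variable Chernoff bound of Lemma~\ref{lem:chernoff}. You instead keep $M$ as given, handle matched pairs and unmatched vertices separately, and invoke Hoeffding's inequality for bounded summands to deal with the pair contributions that can equal $2$. Both routes are equally short; the paper's enlargement trick avoids importing an inequality not already stated, while your version is arguably more direct.
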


\proof
Write $M=\Set{x_1y_1,\dots,x_my_m}$. By arbitrarily enlarging $M$ if necessary, we may assume that $M$ covers all but at most one vertex, i.e. $m=\lfloor n/2\rfloor$. Let $X:=\Set{x_1,\dots,x_m}$ and $Y:=\Set{y_1,\dots,y_m}$. Let $S$ be a random vertex set obtained by including each $\Set{x_i,y_i}$ independently with probability $1/2$, for all $i\in[m]$. We now show that $S$ satisfies (i)--(iii) simultaneously with positive probability. 
$S$ automatically satisfies (iii). Let $S_X:=S\cap X$. Then $|S_X|\sim Bin(m,1/2)$. Hence by Lemma~\ref{lem:chernoff}, $$\prob{||S_X|-m/2|>m/8}\leq2\eul^{-(1/4)^2m/6}.$$ Since $|S|=2|S_X|$, this implies that \whp $n/3\leq|S|\leq 2n/3$.

Consider any vertex $v\in V(G)$. 
Assume that $d^+_X(v)\geq(d_G^+(v)-1)/2$. Otherwise, $d^+_Y(v)\geq(d_G^+(v)-1)/2$ and we can proceed analogously. We have $d_{S_X}^+(v)\sim Bin(d^+_X(v),1/2)$. Thus, $$\prob{|d_{S_X}^+(v)-d^+_X(v)/2|>d^+_X(v)/8}\leq2\eul^{-(1/4)^2d^+_X(v)/6}\leq2\eul^{-\alpha n/300}\leq1/n^2.$$
Note that $3d^+_X(v)/8\leq d_{S_X}^+(v)\leq5d^+_X(v)/8$ and our assumption that $d_X^+(v)\ge (d_G^+(v)-1)/2$ together imply that $d_S^+(v)\geq d_{S_X}^+(v)\geq d_G^+(v)/6$ and $d_S^+(v)\leq d_{S_X}^+(v)+d^+_Y(v) \leq 5d^+_X(v)/8 + (d_G^+(v)-d^+_X(v))  \le 5d_G^+(v)/6$. We conclude $$\prob{|d_S^+(v)-d_G^+(v)/2|>d_G^+(v)/3}\leq1/n^2.$$ Taking a union bound, we see that a.a.s. $d_G^+(v)/6\leq d_S^+(v)\leq5d_G^+(v)/6$ for all vertices $v\in V(G)$. The same applies to $d_S^-(v)$.
\endproof

In order to obtain our optimal decomposition of $\gnp$ into cycles and edges, we want to find a perfect matching on the odd degree vertices, which is achieved by the following lemma.

\begin{lemma} \label{lem:odd matching}
Let $0<p<1$ be constant and $G\sim\gnp$. Then \whp $G[D]$ has a perfect matching, where $D$ is the set of odd degree vertices in $G$.
\end{lemma}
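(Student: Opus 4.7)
The plan is to show that \whp $G[D]$ itself contains a Hamilton cycle; since $|D|$ is even (the sum of degrees of any graph being even), alternating edges of this cycle form a perfect matching of $G[D]$. To apply Lemma~\ref{lem:hamcycle}(ii) to $G[D]$, I need to verify that \whp $G[D]$ is lower-$(p,\eps')$-regular for some $\eps'>0$ and has minimum degree linear in $n$.

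For quasirandomness of $G[D]$: by Lemma~\ref{lem:gnpbasics}(ii), $G$ is \whp lower-$(p,\eps)$-regular, so by Proposition~\ref{prop:changes}(iii) it suffices to show $|D|\geq\beta n$ \whp for some constant $\beta>0$. For fixed $v$, $\mathbb{P}(v\in D)$ equals the probability that $\mathrm{Bin}(n-1,p)$ is odd, which tends to $1/2$ for constant $p\in(0,1)$. For distinct $u,v$, conditioning on whether $uv$ is an edge shows that $\mathrm{Cov}(\mathbf{1}_{u\in D},\mathbf{1}_{v\in D})=\Theta((1-2p)^{2(n-2)})$, exponentially small in $n$. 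Hence $\mathrm{Var}(|D|)=O(n)$, and Chebyshev's inequality yields $|D|=(1+o(1))n/2$ \whp

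For the minimum degree, I claim that \whp every $v\in V(G)$ satisfies $|N_G(v)\cap D|\geq pn/6$. Fix $v$, condition on $G-v$, and set $A_v:=\Set{u\in V(G)\sm\Set{v}:d_{G-v}(u)\text{ is even}}$. A short case analysis on whether $uv\in E(G)$ shows that a vertex $u\neq v$ lies in $N_G(v)\cap D$ if and only if $uv\in E(G)$ and $u\in A_v$. Thus, conditional on $G-v$ (which determines $A_v$), the random variable $|N_G(v)\cap D|$ is distributed as $\mathrm{Bin}(|A_v|,p)$. Applying the same variance argument to $|A_v|$ in $G-v$, together with a fourth moment bound of $O(n^2)$ (which holds because the parity indicators have exponentially small covariances), gives $\mathbb{P}(|A_v|<n/3)=O(n^{-2})$. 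A Chernoff bound conditional on $|A_v|\geq n/3$ then yields $\mathbb{P}(|N_G(v)\cap D|<pn/6)=O(n^{-2})$, and summing over $v$ proves the claim.

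Combining the two steps, Lemma~\ref{lem:hamcycle}(ii) applied to $G[D]$ supplies a Hamilton cycle \whp, and the perfect matching is immediate. The main technical obstacle is obtaining concentration of $|A_v|$ strong enough to survive a union bound over the $n$ vertices: Chebyshev's inequality by itself would only give $O(1/n)$ per vertex, so one has to invoke a higher moment in order to exploit the exponentially small covariances between the parity indicators.
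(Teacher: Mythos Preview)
Your overall strategy coincides with the paper's: show that $G[D]$ is lower-$(p,\eps')$-regular with linear minimum degree and then invoke Lemma~\ref{lem:hamcycle}(ii). Your first step (showing $|D|\ge\beta n$ via Chebyshev) is in fact redundant, since the degree bound $|N_G(v)\cap D|\ge pn/6$ already forces $|D|\ge pn/6$; the paper omits this step for that reason.

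The substantive difference lies in how the minimum-degree bound is obtained. You condition on $G-v$, observe that $|N_G(v)\cap D|\sim\mathrm{Bin}(|A_v|,p)$, and then need $\mathbb{P}(|A_v|<n/3)=O(n^{-2})$ to survive a union bound. You achieve this via a fourth-moment estimate. This works, but your justification (``because the parity indicators have exponentially small covariances'') is incomplete: bounding the fourth central moment by $O(n^2)$ requires that the \emph{four-point} correlations $\mathbb{E}[Y_{u_1}Y_{u_2}Y_{u_3}Y_{u_4}]$ for distinct $u_i$ are negligible, not merely the pairwise covariances. This is true --- writing $(-1)^{d(u)}$ as a product over incident edge indicators gives $\mathbb{E}\bigl[\prod_{u\in S}(-1)^{d(u)}\bigr]=(1-2p)^{|S|(m-|S|)}$, so all mixed centred moments are exponentially small --- but it deserves a line of argument.

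The paper avoids this moment computation entirely by a neat independence trick: fix $v$, pick an auxiliary vertex $w\neq v$, and condition on $G[V\setminus\{w\}]$. Then for each neighbour $v_i$ of $v$ inside $V\setminus\{w\}$, the parity of $d_G(v_i)$ is determined solely by the presence of the edge $v_iw$, so the corresponding indicators are \emph{mutually independent} and Chernoff applies directly. This yields $\mathbb{P}(d_G^{odd}(v)<\alpha n)\le e^{-\Omega(n)}$ with no moment calculations. Your route is correct but more laborious; the paper's device of reserving a single vertex to decouple the parities is the cleaner idea worth knowing.
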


\proof
Let $\alpha:=p\cdot p_{min}/4$, where $p_{min}:=\min\Set{p,1-p}$. Choose a constant $\eps$ such that $0<\eps\ll\alpha,p$. First, we prove that \whp $\delta(G[D])\geq\alpha n$.

For any vertex $v$, let $d^{odd}_G(v)$ denote the number of neighbours of $v$ which have odd degree in $G$. Fix a vertex $v$ and choose any vertex $w$ distinct from $v$. Let $U:=V(G)\sm\Set{w}$ and let $\Omega$ contain all possible outcomes of $E(G[U])$. For $\tilde{E}\in\Omega$ and any vertex $x\in U$, let $d_{\tilde{E}}(x)$ denote the degree of $x$ in the graph $(U,\tilde{E})$. Let $\Omega_1:=\set{\tilde{E}\in\Omega}{d_{\tilde{E}}(v)\geq np/2}$ and $\Omega_2:=\Omega\sm\Omega_1$. We want to show that $\prob{d^{odd}_G(v)<\alpha n}\leq1/n^2$.\COMMENT{\begin{align*}
\prob{d^{odd}_G(v)<\alpha n}&=\sum_{\tilde{E}\in\Omega_1}\prob{d^{odd}_G(v)<\alpha n\mid E(G[U])=\tilde{E}}\cdot\prob{E(G[U])=\tilde{E}} \\
                            &+\sum_{\tilde{E}\in\Omega_2}\prob{d^{odd}_G(v)<\alpha n\mid E(G[U])=\tilde{E}}\cdot\prob{E(G[U])=\tilde{E}} \\
														&\leq\sum_{\tilde{E}\in\Omega_1}\prob{d^{odd}_G(v)<\alpha n\mid E(G[U])=\tilde{E}}\cdot\prob{E(G[U])=\tilde{E}} \\
                            &+\prob{E(G[U])\in\Omega_2}
\end{align*}}
Fix some $\tilde{E}\in\Omega_1$ and let $d:=d_{\tilde{E}}(v)$.\COMMENT{By fixing $\tilde{E}\in\Omega_1$, I don't mean to fix the outcome of $E(G[U])$. Just fix a subset of the edge set of the complete graph on $U$.} Let $v_1,\dots,v_d$ be the neighbours of $v$ in $(U,\tilde{E})$. For every $v_i$, let $X_i$ be the random indicator variable such that $X_i=1$ if $d_{\tilde{E}}(v_i)+e_G(\Set{v_i},\Set{w})$ is odd and $X_i=0$ otherwise. Let $X:=X_1+\dots+X_d$ and observe that $X_i$ depends only on the presence of the edge $v_iw$. Hence, $X_1,\dots,X_d$ are independent.
Observe that $\expn(X\mid E(G[U])=\tilde{E})\geq p_{min}d\geq2\alpha n$.\COMMENT{$\expn X=\sum_{i=1}^d\expn X_i$, $\expn X_i=\prob{X_i=1}=p$ or $1-p$.} Therefore, by using Lemma~\ref{lem:chernoff},
$$\prob{d^{odd}_G(v)<\alpha n\mid E(G[U])=\tilde{E}}\leq\COMMENT{not equal as $w$ might also be an odd neighbour of $v$}\prob{X<\alpha n\mid E(G[U])=\tilde{E}}\leq\eul^{-(1/2)^2\alpha n/3}.$$
Moreover, since $d_{G-w}(v)\sim Bin(n-2,p)$, we get $$\prob{E(G[U])\in\Omega_2}=\prob{d_{G-w}(v)<np/2}\leq\prob{d_{G-w}(v)<2(n-2)p/3}\leq\eul^{-(1/3)^2(n-2)p/3}.$$ Therefore, we can conclude that $$\prob{d^{odd}_G(v)<\alpha n}\leq\eul^{-\alpha n/12}+\eul^{-np/28}\leq1/n^2,$$ and a union bound yields that 
\whp $d^{odd}_G(v)\geq\alpha n$ for all $v\in V(G)$.

Further, Lemma~\ref{lem:gnpbasics} implies that \whp $G$ is lower-$(p,\eps)$-regular.
So we may assume that $G$ is lower-$(p,\eps)$-regular and that $\delta(G[D])\geq\alpha n$.
Since $|D|\geq\alpha n$, Proposition~\ref{prop:changes}(iii) now implies that $G[D]$ is lower-$(p,\eps/\alpha)$-regular. Thus, $G[D]$ has a perfect matching by Lemma~\ref{lem:hamcycle}(ii).
\endproof

\subsection{Robust expansion} \label{subsec:robexp}

Robust expansion is a natural relaxation of the concept of quasirandomness or $\eps$-regularity. Roughly speaking, a graph is a robust outexpander if every vertex set $S$ which is not too small and not too large has an outneighbourhood that is at least a little larger than $S$ even if we delete a small number of vertices or edges. More precisely, let $G$ be a digraph on $n$ vertices and let $0<\nu<1$. For a set $S\In V(G)$, the \emph{$\nu$-robust outneighbourhood} $RN^+_{\nu,G}(S)$ of $S$ is the set of all those vertices $x\in V(G)$ which have at least $\nu n$ inneighbours in $S$. If $\nu\leq\tau<1$ and $|RN^+_{\nu,G}(S)|\geq|S|+\nu n$ for all $S\In V(G)$ with $\tau n\leq|S|\leq(1-\tau)n$, then $G$ is called a \emph{robust $(\nu,\tau)$-outexpander}. The main result of \cite{KO:13} states that every regular robust outexpander of linear degree has a Hamilton decomposition, that is, a decomposition of its edges into edge-disjoint Hamilton cycles.

\begin{theorem}[K{\"u}hn, Osthus \cite{KO:13}] \label{thm:hamdecrobexp}
For all $\alpha>0$ there exists $\tau>0$ such that for every $\nu>0$ there is an integer $n_0=n_0(\alpha,\tau,\nu)$ for which the following holds. If $G$ is a robust $(\nu,\tau)$-outexpander on $n\geq n_0$ vertices and $G$ is an $r$-regular digraph, where $r\geq\alpha n$, then $G$ has a Hamilton decomposition.
\end{theorem}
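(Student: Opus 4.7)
The plan is to build the Hamilton decomposition in two phases: an iterative extraction that handles most of the edges as Hamilton cycles, followed by an absorption phase for the sparse leftover. The fundamental obstacle is that Lemma~\ref{lem:hamcycle} produces only one Hamilton cycle at a time, and peeling off Hamilton cycles will eventually reduce the degree of the remaining digraph below the threshold at which robust expansion survives; the final few cycles thus cannot be obtained by direct iteration. The key idea is therefore to pre-reserve a small ``absorbing'' subgraph $A \In G$ which, together with whatever leftover remains at the end, can itself be decomposed into Hamilton cycles.

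First I would set aside an absorber $A$ with maximum degree at most $\eta r$ for some $\eta \ll \nu$, chosen so that (i) $G \sm A$ is essentially $(1-\eta)r$-regular and remains a robust $(\nu/2, 2\tau)$-outexpander (preservation under sparse removal being a digraph analogue of Proposition~\ref{prop:changes}(ii)), and (ii) $A$ is rich enough that for any sufficiently sparse and sufficiently regular leftover $L$ on $V(G)$, the union $A \cup L$ can be decomposed into edge-disjoint Hamilton cycles. Property (ii) is the heart of the argument: $A$ needs built-in ``switching gadgets'' allowing one to locally reroute Hamilton cycles and swallow leftover edges. I would build $A$ by random sampling and verify (i) and (ii) using Lemma~\ref{lem:chernoff} together with a detailed structural analysis (including paths between prescribed endpoints, as guaranteed by an oriented analogue of Lemma~\ref{lem:hamcycle}(iii)).

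Next I would iteratively apply Lemma~\ref{lem:hamcycle}(i) to $G \sm A$, extracting Hamilton cycles $H_1,\dots,H_t$ with $t$ just below $(1-\eta)r$, at each step checking that the remaining digraph is still a robust expander of linear degree. To ensure the final leftover $L$ inside $G \sm A$ is balanced enough for the absorber to handle it, the extraction must be carried out ``fairly''; a randomised choice of cycles, perhaps guided by a degree-splitting argument in the spirit of Lemma~\ref{lem:auxsets}, should keep each vertex's residual indegree and outdegree essentially equal throughout. Applying property (ii) of $A$ to decompose $A \cup L$ into Hamilton cycles then completes the decomposition of $G$.

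The main obstacle is the design and analysis of the absorber. Standard robust-expansion arguments yield only \emph{approximate} Hamilton decompositions, so producing an exact one forces us to engineer all the flexibility in advance, and that flexibility must work uniformly over every leftover $L$ that the extraction phase might leave behind. Reconciling the competing requirements -- $A$ rich enough to absorb anything, yet sparse enough that $G \sm A$ is still a robust $(\nu',\tau')$-outexpander of almost the original degree -- is where the bulk of the work lies, and is likely to require an intricate combination of random sampling, local switching arguments, and careful bookkeeping of in- and outdegrees throughout the peeling process.
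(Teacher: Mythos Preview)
This theorem is not proved in the present paper at all: it is quoted verbatim from \cite{KO:13} and used as a black-box tool (see the sentence preceding the statement in Subsection~\ref{subsec:robexp}). So there is no ``paper's own proof'' to compare against here.

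That said, your high-level outline --- reserve a sparse absorber, peel off Hamilton cycles from the remainder until it is thin, then let the absorber swallow the leftover --- is indeed the architecture of the actual proof in \cite{KO:13}. Two concrete issues with your sketch as written, however. First, you repeatedly invoke Lemma~\ref{lem:hamcycle} and Proposition~\ref{prop:changes}, but in this paper those are stated only for lower-$(p,\eps)$-regular (di)graphs; robust outexpansion is a strictly weaker hypothesis (Proposition~\ref{prop:REG2EXP} goes only one way), so you cannot appeal to those lemmas for a general robust $(\nu,\tau)$-outexpander. You would need the Hamiltonicity result for robust outexpanders from \cite{KOT} and a separate robustness-under-sparse-deletion argument phrased directly for robust expansion. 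Second, and more seriously, ``random sampling plus Chernoff'' is nowhere near enough to produce an absorber with the universal property you describe: in \cite{KO:13} the absorber (the ``robustly decomposable digraph'') is built through an elaborate multi-scale iteration involving regularity-type partitions, chord sequences, exceptional path systems and a careful hierarchy of parameters, occupying the bulk of that $80$-page paper. Your sketch correctly identifies this as the main obstacle but substantially underestimates what is required to overcome it.
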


Note that the function $\tau=\tau(\alpha)$ in Theorem~\ref{thm:hamdecrobexp} can be taken to be non-decreasing. Let $G$ be a graph on $n$ vertices and let $0<\nu<1$. For a set $S\In V(G)$, the \emph{$\nu$-robust neighbourhood} $RN_{\nu,G}(S)$ of $S$ is the set of all those vertices $x\in V(G)$ which have at least $\nu n$ neighbours in $S$. If $\nu\leq\tau<1$ and $|RN_{\nu,G}(S)|\geq|S|+\nu n$ for all $S\In V(G)$ with $\tau n\leq|S|\leq(1-\tau)n$, then $G$ is called a \emph{robust $(\nu,\tau)$-expander}. 
The following undirected version of Theorem~\ref{thm:hamdecrobexp} is deduced from Theorem~\ref{thm:hamdecrobexp} in \cite{KO:14}.

\begin{theorem}[K{\"u}hn, Osthus \cite{KO:14}] \label{thm:hamdecrobexpundirected}
For all $\alpha>0$ there exists $\tau>0$ such that for every $\nu>0$ there is an integer $n_0=n_0(\alpha,\tau,\nu)$ for which the following holds. If $G$ is a robust $(\nu,\tau)$-expander on $n\geq n_0$ vertices and $G$ is an $r$-regular graph where $r\geq\alpha n$ and $r$ is even, then $G$ has a Hamilton decomposition. Moreover, this decomposition can be found in time polynomial in $n$.
\end{theorem}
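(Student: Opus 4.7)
The plan is to deduce Theorem~\ref{thm:hamdecrobexpundirected} from its directed counterpart Theorem~\ref{thm:hamdecrobexp} by producing a suitable Eulerian orientation of $G$. Since $G$ is $r$-regular with $r$ even, every vertex of $G$ has even degree, so $G$ admits an Eulerian orientation $\vec G$; any such orientation is an $r/2$-regular digraph with $d^+_{\vec G}(v)=d^-_{\vec G}(v)=r/2$ for every $v$. Provided we can additionally arrange for $\vec G$ to be a robust $(\nu',\tau)$-outexpander for some $\nu'=\nu'(\nu)>0$, Theorem~\ref{thm:hamdecrobexp} (applied with parameters $\alpha/2,\tau,\nu'$) decomposes $\vec G$ into $r/2$ directed Hamilton cycles, and forgetting orientations yields the required Hamilton decomposition of $G$.

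The main obstacle is the construction of such an orientation, since robust expansion is a global property that can easily be destroyed: a careless Eulerian orientation could orient all of the $d_S(v)\ge \nu n$ edges between some set $S$ and some $v\in RN_{\nu,G}(S)$ out of $v$, leaving $v$ with no in-neighbour in $S$. I would circumvent this with a randomised construction. First, decompose $G$ into $r/2$ edge-disjoint $2$-factors using Petersen's theorem (valid because $r$ is even), and then, independently for each cycle of each $2$-factor, choose one of its two cyclic orientations uniformly at random. The resulting orientation $\vec G_0$ is automatically Eulerian and $r/2$-regular, and for any fixed $S$ and $v$, the in-degree $|N^-_{\vec G_0}(v)\cap S|$ is a sum of independent $\{0,1\}$-valued random variables with mean $d_S(v)/2$. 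A Chernoff concentration bound, together with a careful union bound (or a net argument) over sets $S$ with $\tau n\le |S|\le (1-\tau)n$ and vertices $v\in RN_{\nu,G}(S)$, then shows that, with positive probability, $\vec G_0$ is a robust $(\nu/3,\tau)$-outexpander.

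Given such an Eulerian orientation $\vec G:=\vec G_0$, Theorem~\ref{thm:hamdecrobexp} produces a Hamilton decomposition of $\vec G$, which upon forgetting orientations yields the desired Hamilton decomposition of $G$. The polynomial-time clause follows from the constructive nature of each step: Petersen's $2$-factor decomposition can be computed in polynomial time via bipartite matching, the random cycle orientations can be sampled in $O(e(G))$ time (and derandomised via conditional expectations if a deterministic algorithm is required), and an algorithmic implementation of Theorem~\ref{thm:hamdecrobexp} accompanies its proof in \cite{KO:13}. The hardest step is undoubtedly the second paragraph, namely verifying that the randomised orientation simultaneously satisfies the Eulerian, regularity, and robust outexpansion conditions required to invoke Theorem~\ref{thm:hamdecrobexp}.
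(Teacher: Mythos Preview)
Your overall strategy---produce an $r/2$-regular orientation of $G$ that is a robust outexpander and then invoke Theorem~\ref{thm:hamdecrobexp}---is indeed how \cite{KO:14} proceeds (the present paper does not prove this theorem itself; it quotes it from there). The genuine gap is in your second paragraph. For a fixed pair $(S,v)$ with $d_S(v)\ge\nu n$, your Chernoff bound gives $\pr\bigl(|N^-_{\vec G_0}(v)\cap S|<\nu n/3\bigr)\le\exp(-c\nu n)$ for an absolute constant $c$, but you must then union over $2^{\Theta(n)}$ sets $S$, which forces $c\nu>\log 2$. Since the theorem is required to hold for \emph{every} $\nu>0$, this fails whenever $\nu$ is small. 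A net argument does not help: to shrink the net below $\exp(c\nu n)$ sets you would need an $\eps n$-net with $\eps$ bounded away from $0$ independently of $\nu$, but transferring robust outexpansion from a net set $S'$ to a nearby $S$ costs $\eps n$ in the parameter, which swamps the $\nu n/3$ you are trying to preserve. Nor does switching to the edge-count formulation (bounding $e_{\vec G_0}(S\to T')$ for a putative bad set $T'$) save the $2$-factor construction: all edges along a single cycle are governed by one coin flip, so long cycles in the Petersen decomposition ruin the concentration, and Hoeffding only yields $\exp(-\Theta(\nu^2 n))$, which again loses to the $4^n$ pairs when $\nu$ is small.

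The fix, essentially as carried out in \cite{KO:14}, is to orient each edge \emph{independently}. Then $e_{\vec G_0}(S\to T')$ is binomial with mean $e_G(S,T')/2\ge\Theta(\nu^2 n^2)$, and the failure probability $\exp(-\Theta(\nu^2 n^2))$ comfortably beats the $4^n$ pairs $(S,T')$; so the random orientation is \whp a robust outexpander. The cost is that it is only approximately $r/2$-regular (degrees $r/2\pm O(\sqrt{n\log n})$). One then repairs the degrees by reorienting a negligible set of edges---for instance by reserving a small random slice of $G$ in advance and orienting that slice deterministically via a flow argument in the spirit of Lemma~\ref{lem:flow}---and robust outexpansion survives this perturbation. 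This two-stage construction (random orientation for expansion, then repair for exact regularity) is the missing ingredient in your sketch.
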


It is easy to see that lower-$\eps$-regularity implies robust expansion, so we can apply the above results to lower-$\eps$-regular graphs.

\begin{prop} \label{prop:REG2EXP}
Let $0<1/n_0\ll\eps\ll\tau,p<1$ and suppose that $G$ is a lower-$(p,\eps)$-regular (di-)graph on $n\geq n_0$ vertices. Then $G$ is also a robust $(\eps,\tau)$-(out)expander.\COMMENT
{Let $S\In V(G)$ with $\tau n\leq|S|\leq(1-\tau)n$. Let $RN:=RN^+_{\nu,G}(S)$ and suppose that $|RN|<|S|+\nu n$ ($\nu=\eps$). Then, $|\overline{RN}|\geq(\tau-\nu)n\geq\eps n$. Let $T$ be a subset of $\overline{RN}$ of size $\eps n$. Let $S'$ be a subset of $S$ of size at least $(\tau-\eps)n$ disjoint from $T$. Since $G$ is lower-$(p,\eps)$-regular, we have $e_G(S',T)\geq(p-\eps)(\tau-\eps) n|T|$. But since $T\In\overline{RN}$, we also have $e_G(S',T)<\nu n|T|$, thus implying $\nu\geq(p-\eps)(\tau-\eps)$, which is a contradiction if $\nu$ was chosen small enough.}
\end{prop}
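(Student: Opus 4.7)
The plan is a short double-counting argument by contradiction, exploiting the fact that lower-$(p,\eps)$-regularity forces many edges between any two disjoint linear-sized sets, whereas membership in the complement of the robust outneighbourhood bounds such edge counts from above. I would argue both the directed and undirected statements in parallel, since the two proofs are identical apart from replacing inneighbours by neighbours.

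Concretely, suppose for contradiction that $G$ is not a robust $(\eps,\tau)$-outexpander, so there exists $S\In V(G)$ with $\tau n\leq|S|\leq(1-\tau)n$ whose $\eps$-robust outneighbourhood $RN:=RN^+_{\eps,G}(S)$ satisfies $|RN|<|S|+\eps n$. The first observation is that the complement $\overline{RN}:=V(G)\sm RN$ then has size at least $n-|S|-\eps n\geq(\tau-\eps)n\geq\eps n$, using $\eps\ll\tau$. I would then pick any $T\In\overline{RN}$ with $|T|=\eps n$ and set $S':=S\sm T$; since $|T|\leq\eps n\ll\tau n\leq|S|$, the set $S'$ still has size at least $(\tau-\eps)n\geq\eps n$, and $S',T$ are disjoint. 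Lower-$(p,\eps)$-regularity therefore applies and gives
\[
e_G(S',T)\geq(p-\eps)|S'||T|\geq(p-\eps)(\tau-\eps)n\cdot|T|.
\]

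On the other hand, by the very definition of the robust outneighbourhood, every vertex of $T\In\overline{RN}$ has fewer than $\eps n$ inneighbours in $S$, and hence fewer than $\eps n$ inneighbours in $S'\In S$. Summing over $T$ gives $e_G(S',T)<\eps n\cdot|T|$. Combining the two bounds yields $(p-\eps)(\tau-\eps)<\eps$, which contradicts the hierarchy $\eps\ll\tau,p$. The undirected case is proved verbatim, simply replacing each occurrence of "inneighbours" by "neighbours" and $RN^+$ by $RN$. I do not expect any genuine obstacle: the only point requiring a little care is arranging that $S'$ and $T$ are disjoint and both of size at least $\eps n$ before quoting lower-regularity, which is why one subtracts $T$ from $S$ rather than testing directly against $S$.
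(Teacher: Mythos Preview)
Your argument is correct and essentially identical to the paper's own (comment-hidden) proof: both take $T\subseteq\overline{RN}$ of size $\eps n$, set $S'\subseteq S$ disjoint from $T$ with $|S'|\geq(\tau-\eps)n$, and compare the lower-regularity bound $e_G(S',T)\geq(p-\eps)(\tau-\eps)n|T|$ against the upper bound $e_G(S',T)<\eps n|T|$ coming from $T\subseteq\overline{RN}$.
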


Our next aim is to prove that an Eulerian lower-$\eps$-regular graph can be oriented in such a way that the digraph thus obtained is Eulerian and lower-$\eps$-regular. To this end, we need the following lemma from \cite{KO:13} involving subgraphs of robust outexpanders, which can be proven by means of the Max-Flow-Min-Cut theorem.

\begin{lemma} \label{lem:flow} 
Suppose that $0<1/n_0\ll\gamma,\xi\ll\nu\leq\tau\ll\alpha<1$. Let $G$ be a digraph on $n\geq n_0$ vertices with $\delta^0(G)\geq\alpha n$ which is a robust $(\nu,\tau)$-outexpander. For every vertex $x\in V(G)$, let $n^+_x,n^-_x\in\bN$ be such that $(1-\gamma)\xi n\leq n^+_x,n^-_x\leq(1+\gamma)\xi n$ and such that $\sum_{x\in V(G)}n^+_x=\sum_{x\in V(G)}n^-_x$. Then $G$ contains a spanning subdigraph $G'$ which satisfies $d_{G'}^+(x)=n^+_x$ and $d_{G'}^-(x)=n^-_x$ for every $x\in V(G)$.
\end{lemma}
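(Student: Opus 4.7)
The plan is to realise $G'$ as an integer $s$--$t$ flow in an auxiliary network. Split each $x\in V(G)$ into two copies $x^+,x^-$, add a source $s$ and a sink $t$, and set capacities $c(s,x^+):=n_x^+$, $c(x^-,t):=n_x^-$ and $c(x^+,y^-):=1$ for every edge $xy\in E(G)$. An integer flow in this network naturally defines a spanning subdigraph $G'\In G$ by keeping $xy$ precisely when the arc $x^+y^-$ carries one unit of flow; then $d_{G'}^+(x)$ equals the flow leaving $s$ through $x^+$ and $d_{G'}^-(x)$ equals the flow entering $t$ through $x^-$. Hence, if the maximum flow attains the value $N:=\sum_{x\in V(G)} n_x^+=\sum_{x\in V(G)} n_x^-$, every arc incident to $s$ or $t$ is saturated, giving $d_{G'}^\pm(x)=n_x^\pm$ as required; by the integrality theorem for max-flow such a flow can be taken to be integral.

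By Max-Flow-Min-Cut the task reduces to showing that every $s$--$t$ cut has capacity at least $N$. Any such cut is encoded by two sets $A,B\In V(G)$, where $A:=\{x:x^+\text{ lies on the $s$-side}\}$ and $B:=\{y:y^-\text{ lies on the $t$-side}\}$, and has capacity
$$\sum_{x\notin A}n_x^+\;+\;\sum_{y\notin B}n_y^-\;+\;e_G(A,B),$$
where $e_G(A,B)$ is the number of edges of $G$ directed from $A$ to $B$. Subtracting $N=\sum_x n_x^+$ turns the inequality $\mathrm{capacity}\geq N$ into $e_G(A,B)+\sum_{y\notin B}n_y^-\geq\sum_{x\in A}n_x^+$. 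Plugging in $(1-\gamma)\xi n\leq n_x^\pm\leq(1+\gamma)\xi n$, it suffices to verify
$$e_G(A,B)\;\geq\;\xi n\bigl(|A|+|B|-n\bigr)+2\gamma\xi n^2,$$
which is trivial whenever $|A|+|B|\leq(1-3\gamma)n$; so I may assume $|A|+|B|\geq(1-3\gamma)n$.

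Next I would split on the size of $A$. In the central range $\tau n\leq|A|\leq(1-\tau)n$, invoke robust $(\nu,\tau)$-outexpansion: with $R:=RN^+_{\nu,G}(A)$ one has $|R|\geq|A|+\nu n$, hence $|R\cap B|\geq(|A|+|B|-n)+\nu n$, and each vertex of $R\cap B$ receives at least $\nu n$ edges from $A$, producing $e_G(A,B)\geq\nu n\bigl((|A|+|B|-n)+\nu n\bigr)$; since $\gamma,\xi\ll\nu$ this comfortably beats the required lower bound. If $|A|<\tau n$, then $|A|+|B|\geq(1-3\gamma)n$ forces $n-|B|\leq(\tau+3\gamma)n\leq\alpha n/2$, so each $x\in A$ satisfies $d^+_B(x)\geq d^+_G(x)-(n-|B|)\geq\alpha n/2$ and hence $e_G(A,B)\geq|A|\alpha n/2$, which dominates the required quantity since $\xi\ll\alpha$. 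The remaining case $|A|>(1-\tau)n$ is symmetric, using $\delta^-(G)\geq\alpha n$ and summing $d_A^-(y)\geq\alpha n/2$ over $y\in B$.

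The main obstacle I anticipate is routine but delicate: choosing the case split so that robust outexpansion is invoked precisely in the regime where it applies, and carrying the multiplicative slack $\gamma$ through the inequalities without eroding the margin provided by $\nu^2$. Once the min-cut bound is checked in each range, Max-Flow-Min-Cut together with integrality of max-flow yields the desired spanning subdigraph $G'$.
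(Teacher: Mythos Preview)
The paper does not actually prove Lemma~\ref{lem:flow}; it quotes it from \cite{KO:13} and remarks that it ``can be proven by means of the Max-Flow-Min-Cut theorem.'' Your proposal is precisely that argument, so the approach matches what the paper has in mind.

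There is one small but genuine slip in your bookkeeping. After reducing to
\[
e_G(A,B)\;\ge\;\xi n\bigl(|A|+|B|-n\bigr)+2\gamma\xi n^2,
\]
you treat this as the target in all three ranges. In the boundary range $|A|<\tau n$ (and symmetrically $|A|>(1-\tau)n$) your edge bound is $e_G(A,B)\ge |A|\alpha n/2$, which is linear in $|A|$, whereas the additive term $2\gamma\xi n^2$ on the right is of order $n^2$ and does not shrink with $|A|$; for instance when $A=\emptyset$ and $B=V(G)$ your displayed inequality reads $0\ge 2\gamma\xi n^2$, which is false. The passage from the exact requirement
\[
e_G(A,B)\;\ge\;(1+\gamma)\xi n\,|A|-(1-\gamma)\xi n\,(n-|B|)
\]
to the simplified one (via $\gamma(|A|+n-|B|)\le 2\gamma n$) is therefore too lossy at the extremes.

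The fix is painless: in the two boundary ranges, work with the exact requirement instead of the simplified one. Since $(1-\gamma)\xi n(n-|B|)\ge 0$, it suffices there to show $e_G(A,B)\ge (1+\gamma)\xi n\,|A|$, and your own count $e_G(A,B)\ge |A|\alpha n/2$ gives this immediately because $\xi\ll\alpha$. (In particular the case $A=\emptyset$ is trivial in the exact form.) Your robust-outexpansion estimate in the central range $\tau n\le |A|\le (1-\tau)n$ is fine as written: with $D:=|A|+|B|-n\ge -3\gamma n$ one gets $e_G(A,B)\ge \nu n(D+\nu n)$, and $(\nu-\xi)nD+\nu^2 n^2-2\gamma\xi n^2\ge n^2(\nu^2-3\gamma\nu-2\gamma\xi)>0$ since $\gamma,\xi\ll\nu$. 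With this correction the min-cut bound holds in every range and the argument goes through.
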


\begin{lemma} \label{lem:orient}
Let $0<1/n_0\ll\eps\ll p,\alpha<1$. Suppose that $G$ is an Eulerian graph on $n\geq n_0$ vertices. Assume further that $G$ is lower-$(p,\eps)$-regular and that $\delta(G)\geq\alpha n$. Then there exists an orientation $G'$ of $G$ such that $G'$ is Eulerian and lower-$(p/4,\eps)$-regular.\COMMENT{$\delta^0(G')\geq\alpha n/2$ follows automatically}
\end{lemma}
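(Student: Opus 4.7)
The plan is to take a uniformly random orientation of $G$, which will be approximately Eulerian and still lower-regular, and then to repair the small degree imbalances by flipping a sparse subdigraph extracted via Lemma~\ref{lem:flow}.

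First, orient each edge of $G$ independently and uniformly at random to obtain a digraph $H$. Since $d_H^+(v)\sim Bin(d_G(v),1/2)$ and $e_H(S,T)\sim Bin(e_G(S,T),1/2)$ for any disjoint $S,T\In V(G)$, Lemma~\ref{lem:chernoff} together with a union bound over the $n$ vertices and the at most $4^n$ pairs $(S,T)$ shows that with positive probability $H$ simultaneously satisfies $\delta^0(H)\geq\alpha n/3$; $|b(v)|\leq n^{2/3}$ for every $v$, where $b(v):=d_H^+(v)-d_H^-(v)$; and $e_H(S,T)\geq(p/2-\eps)|S||T|$ whenever $|S|,|T|\geq\eps n$. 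In particular $H$ is lower-$(p/2-\eps,\eps)$-regular, so by Proposition~\ref{prop:REG2EXP} it is also a robust $(\eps,\tau)$-outexpander for some constant $\tau$ with $\eps\ll\tau\ll p,\alpha$. Fix such an orientation $H$.

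Next, choose sufficiently small constants $\gamma,\xi$ with $\gamma,\xi\ll\eps p$ and define target degrees
\[
n^+_v:=\xi n+\max\Set{0,b(v)/2},\qquad n^-_v:=\xi n+\max\Set{0,-b(v)/2}.
\]
The $n^\pm_v$ are integers (since $d_G(v)$ is even, $b(v)$ is even), they satisfy $(1-\gamma)\xi n\leq n^\pm_v\leq(1+\gamma)\xi n$ for large $n$, and $\sum_v n^+_v=\sum_v n^-_v$ because $\sum_v b(v)=0$. Applying Lemma~\ref{lem:flow} to $H$ therefore produces a spanning subdigraph $F\In H$ with $d_F^+(v)=n^+_v$ and $d_F^-(v)=n^-_v$ for every $v$; in particular $\Delta(F)\leq 3\xi n$ and $d_F^+(v)-d_F^-(v)=b(v)/2$.

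Let $G'$ be the orientation of $G$ obtained from $H$ by reversing every arc in $F$. Then
\[
d_{G'}^+(v)=d_H^+(v)-d_F^+(v)+d_F^-(v)=d_H^+(v)-b(v)/2=d_G(v)/2,
\]
so $G'$ is Eulerian. Moreover, for disjoint $S,T\In V(G)$ with $|S|,|T|\geq\eps n$,
\[
e_{G'}(S,T)\geq e_H(S,T)-e_F(S,T)\geq(p/2-\eps)|S||T|-3\xi n|S|\geq(p/4-\eps)|S||T|,
\]
the last step holding because $\xi\ll\eps p$ and $|T|\geq\eps n$. Hence $G'$ is lower-$(p/4,\eps)$-regular, as required. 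The central difficulty is that the rebalancing has to be done \emph{exactly} (to achieve Eulerianness) yet \emph{sparsely} (to preserve lower-regularity); both are delivered by Lemma~\ref{lem:flow}, which lets us shrink the size parameter $\xi$ as far as we like while still being able to absorb the imbalances $b(v)/2=O(n^{2/3})$ into the prescribed degree sequence.
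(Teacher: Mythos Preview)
Your proof is correct and shares the paper's high-level strategy: take a random orientation, then use Lemma~\ref{lem:flow} to repair the small degree imbalances while preserving lower-regularity. The implementations differ in how the repair is carried out. The paper first randomly splits the oriented edges into two digraphs $G_1,G_2$, applies Lemma~\ref{lem:flow} inside $G_2$ to extract a subdigraph $G_2'$ that makes $G_1\cup G_2'$ Eulerian, and then re-orients the leftover edges of $G_2\sm G_2'$ via an Euler tour; lower-$(p/4,\eps)$-regularity is inherited directly from $G_1$. You instead keep the full random orientation $H$, apply Lemma~\ref{lem:flow} inside $H$ to find a very sparse $F$ encoding exactly half the imbalance, and then reverse the arcs of $F$; lower-$(p/4,\eps)$-regularity comes from the $(p/2-\eps)$-regularity of $H$ minus the negligible contribution of $F$. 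Your route is a bit more streamlined --- it avoids both the auxiliary random bipartition and the Euler-tour step --- while the paper's route has the minor aesthetic advantage that regularity is inherited from a fixed subdigraph rather than via a subtraction. Both arguments rely on the same hierarchy $1/n_0\ll\gamma,\xi\ll\eps\ll\tau\ll p,\alpha$ for the application of Lemma~\ref{lem:flow}.
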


\proof
Choose new constants $\gamma,\xi,\tau$ such that $0<1/n_0\ll\gamma,\xi\ll\eps\ll\tau\ll p,\alpha<1$. Observe that there exists a decomposition of the edges of $G$ into two digraphs $G_1,G_2$ such that the following holds for $i=1,2$.
\COMMENT{
(a) $d^+_{G_i}(x)\sim Bin(d_G(x),1/4)$. $$\prob{|d^+_{G_i}(x)-d_G(x)/4|>2\sqrt{n\log n}}\leq2\eul^{-(\frac{8\sqrt{n\log n}}{d_G(x)})^2d_G(x)/12}\leq2\eul^{-2\log n}\leq2/n^2$$
\newline
(b) Let $S,T$ be disjoint with $|S|,|T|\geq\eps n$. $e_{G_i}(S,T)\sim Bin(e_G(S,T),1/4)$. $$\prob{e_{G_i}(S,T)\leq(1-\eps)e_G(S,T)/4}\leq\eul^{-\eps^2e_G(S,T)/12}\leq\eul^{-\eps^4(p-\eps)n^2/12}\leq\eul^{-2n}$$ Thus \whp 
\begin{align*}
e_{G_i}(S,T)&\geq (1-\eps)e_G(S,T)/4\geq(1-\eps)(p-\eps)|S||T|/4 \\
            &=(p/4-(\eps+\eps p-\eps^2)/4)|S||T|\geq(p/4-\eps)|S||T|
\end{align*}}

\begin{itemize}
\item[(a)] $|d^\pm_{G_i}(x)-d_G(x)/4|\leq\gamma\xi n/2$ for all $x\in V(G)$.
\item[(b)] $G_i$ is lower-$(p/4,\eps)$-regular.
\end{itemize}
This can be seen as follows. If $e=xy\in E(G)$, orient $e$ from $x$ to $y$ with probability $1/2$ and from $y$ to $x$ otherwise and include $e$ in $G_1$ with probability $1/2$ and in $G_2$ otherwise. Do this independently for each edge and then use Lemma~\ref{lem:chernoff}.

In particular, we have $|d^+_{G_1}(x)-d^-_{G_1}(x)|\leq\gamma\xi n$ for all $x\in V(G)$ and $\delta^0(G_2)\geq\alpha n/5$. Moreover, (b) and Proposition~\ref{prop:REG2EXP} together imply that $G_2$ is a robust $(\eps,\tau)$-outexpander.\COMMENT{Here we need $\eps\ll\tau$}

Now, for every vertex $x$, let $n^+_x:=\xi n$ and $n^-_x:=d_{G_1}^+(x)-d_{G_1}^-(x)+\xi n$. Thus, $n^+_x,n^-_x=(1\pm\gamma)\xi n$ and $\sum_{x\in V(G)}n^+_x=\sum_{x\in V(G)}n^-_x$. Apply Lemma~\ref{lem:flow} (with $\eps$ playing the role of $\nu$) to $G_2$ to find a spanning subdigraph $G_2'$ of $G_2$ such that $d_{G_2'}^+(x)=n^+_x$ and $d_{G_2'}^-(x)=n^-_x$ for all $x\in V(G)$. Then, $G_1\cup G_2'$ is Eulerian. Let $H$ be the spanning subgraph of $G$ containing all those edges which are not contained in $G_1\cup G_2'$. Thus, $H$ is Eulerian and can be decomposed into cycles. Orient each cycle consistently to obtain an Eulerian orientation $H'$ of $H$. Then, $G':=G_1\cup G_2'\cup H'$ is the desired orientation of $G$. Indeed, $G'$ is lower-$(p/4,\eps)$-regular since $G_1$ is. 
\endproof

\section{Cycle decompositions} \label{sec:cycles}

We will now prove Theorem~\ref{thm:Eulerian plain}. In fact, we prove a directed version (Theorem~\ref{thm:Eulerian}) which also gives additional control on the decomposition. We then derive the undirected version (and thus Theorem~\ref{thm:Eulerian plain}) from this directed version. The additional control as well as the directed version will be of use when considering linear forests in Section~\ref{sec:paths}.


In order to state Theorem~\ref{thm:Eulerian}, we need one more definition. Let $G$ be a (di-)graph and let $M=\Set{x_1y_1,\dots,x_my_m}$ be a matching in the complete graph on $V(G)$ such that $d_G(x_i)\leq d_G(y_i)$ for all $i\in\Set{1,\dots,m}$. We then say that a sub(-di-)graph $F$ of $G$ is \emph{consistent with $M$}, if for all $i\in\Set{1,\dots,m}$, $x_i\in V(F)$ implies $y_i\in V(F)$. Note that $F$ can be consistent with $M$ and separate $M$ at the same time, as $F$ might contain $y_i$ but not~$x_i$.

\begin{theorem} \label{thm:Eulerian}
Let $0<1/n_0\ll\eta,\eps\ll p<1$ be such that $\eps^2\le \eta$. Suppose $G$ is a lower-$(p,\eps)$-regular digraph on $n\geq n_0$ vertices. Moreover, assume that $G$ is Eulerian and that $\Delta(G)-\delta(G)\leq\eta n$. Let $M$ be any matching in the complete graph on $V(G)$. Then $G$ can be decomposed into $\Delta(G)/2$ cycles which are consistent with~$M$.
\end{theorem}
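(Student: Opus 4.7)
The plan is to reduce to the Hamilton decomposition result of Theorem~\ref{thm:hamdecrobexp}. Since $G$ is near-regular but not exactly regular, I would first peel off a small number $k$ of cycles $C_1,\ldots,C_k$ that absorb the irregularity, leaving a subdigraph $G^*$ that is $r$-regular for $r=\Delta(G)/2-k$; Theorem~\ref{thm:hamdecrobexp} will then produce $r$ Hamilton cycles decomposing $G^*$, and these are automatically consistent with $M$ since they span $V(G)$. So it suffices to ensure that the peeled cycles $C_1,\ldots,C_k$ themselves are consistent with $M$. Fix auxiliary constants $\eta\ll\alpha\ll p$ and set $k:=\lceil\alpha n\rceil$.

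Each $C_i$ will be a Hamilton cycle on a subset $V(G)\sm A_i$ for a carefully chosen set $A_i\In V(G)$. For the degree bookkeeping to leave an $r$-regular remainder, the vertex $v$ must be omitted from exactly $c_v:=\Delta(G)/2-d_G^+(v)\in[0,\eta n/2]$ of the sets $A_i$, equivalently, $v$ must lie in $k-c_v$ of the cycles. Recall that $M$-consistency of $C_i$ is equivalent to requiring, for every $x_jy_j\in M$ (with $d_G(x_j)\leq d_G(y_j)$), that $y_j\in A_i$ forces $x_j\in A_i$. Since $c_{x_j}\geq c_{y_j}$ whenever $x_jy_j\in M$, one can choose sets $I_v\In\{1,\ldots,k\}$ with $|I_v|=c_v$ and $I_{y_j}\In I_{x_j}$, distributed as evenly as possible so that each $A_i:=\{v:i\in I_v\}$ has size at most $\sum_v c_v/k+1\leq \eta n/(2\alpha)+1\ll \alpha n$.

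I would then construct $C_1,\ldots,C_k$ greedily. Writing $G_i:=G\sm(C_1\cup\cdots\cup C_{i-1})$, the digraph $G\sm G_i$ has maximum degree at most $2k$, so Proposition~\ref{prop:changes}(ii)--(iii) imply that $G_i[V(G)\sm A_i]$ is still lower-$(p,\eps')$-regular for a slightly larger constant $\eps'\ll p$, and its minimum semidegree is at least $\delta^0(G)-2k-|A_i|$, which is linear in $n$; Lemma~\ref{lem:hamcycle}(i) then yields a Hamilton cycle $C_i$ of $G_i[V(G)\sm A_i]$. After all $k$ steps, the deficit calculation gives $d_{G_{k+1}}^{\pm}(v)=r$ for every vertex $v$, so $G_{k+1}$ is $r$-regular, still lower-$(p,\eps')$-regular, and hence a robust $(\eps',\tau)$-outexpander by Proposition~\ref{prop:REG2EXP}. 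Theorem~\ref{thm:hamdecrobexp} produces the desired $r$ Hamilton cycles in $G_{k+1}$, completing the decomposition of $G$ into $k+r=\Delta(G)/2$ cycles, all consistent with $M$.

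The main obstacle I expect is calibrating the constant hierarchy $\eta\ll\alpha\ll\eps'\ll p$ so that the cumulative erosion of the lower-$\eps$-regularity parameter across the $k$ iterative removals, compounded with the loss from restricting to $V(G)\sm A_i$ via Proposition~\ref{prop:changes}(iii), stays below the threshold required by Lemma~\ref{lem:hamcycle} at every single iteration, while simultaneously keeping each $|A_i|$ small enough that $V(G)\sm A_i$ inherits enough regularity; this is a routine but careful parameter chase rather than a conceptual hurdle.
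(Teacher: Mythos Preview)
Your approach is correct and genuinely different from the paper's. Both proofs peel off cycles until a regular digraph remains and then invoke Theorem~\ref{thm:hamdecrobexp}, but the mechanism for selecting the peeled cycles differs. Rather than pre-computing omission sets $A_i$, the paper proceeds adaptively: at step~$i$ it lets $Z_i$ be the current set of maximum-degree vertices and takes $C_i$ to be a Hamilton cycle of $G_i[S_j\cup Z_i]$, where $S_1,S_2$ is a fixed partition of $V(G)$ (obtained probabilistically via Lemma~\ref{lem:auxsets}) that does not separate $M$, and $j\in\{1,2\}$ alternates with the parity of~$i$. The alternation guarantees that the minimum degree drops at roughly half the rate of the maximum degree, so the process terminates after only $k\le 2\eta n$ steps; consistency with $M$ is then argued indirectly from the facts that neither $S_j$ separates $M$ and that the sets $Z_i$ are nested. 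Your scheme trades the probabilistic partition lemma and the adaptive $Z_i$-tracking for a larger number $k=\lceil\alpha n\rceil$ of pre-designed removals, with consistency enforced transparently through the containments $I_{y_j}\subseteq I_{x_j}$. The cost is a somewhat worse regularity degradation (of order $\sqrt{\alpha}$ rather than $\sqrt{\eta}$), which, as you note, is absorbed by choosing $\eta\ll\alpha\ll\tau(p)^2$; the gain is a cleaner, non-adaptive construction that sidesteps Lemma~\ref{lem:auxsets} entirely.
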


Clearly, if an Eulerian graph $G$ has a decomposition into $\Delta(G)/2$ cycles, then all vertices of maximum degree must be contained in every cycle of the decomposition. Our proof strategy for Theorem~\ref{thm:Eulerian} is thus to inductively take out cycles that contain all vertices of maximum degree. If we can make sure that the minimum degree decreases at a significantly slower rate than the maximum degree, we will eventually obtain a dense regular graph. We will ensure that this graph is still a robust outexpander so that we can apply Theorem~\ref{thm:hamdecrobexp} to decompose it into Hamilton cycles.

We will deduce the undirected version of Theorem~\ref{thm:Eulerian} by means of Lemma~\ref{lem:orient}. (Note that our proof of Theorem~\ref{thm:Eulerian} also works directly for the undirected case, by using Theorem~\ref{thm:hamdecrobexpundirected} instead of Theorem~\ref{thm:hamdecrobexp}.)

\lateproof{Theorem~\ref{thm:Eulerian}}
Let $\tau:=\tau(p/3)$ be as defined in Theorem~\ref{thm:hamdecrobexp}. We may assume $0<1/n_0\ll\eta,\eps\ll\tau,p<1$. By Proposition~\ref{prop:minsemideg}, $\delta^0(G)\geq pn/2$.

Apply Lemma~\ref{lem:auxsets} to obtain a partition $S_1,S_2$ of $V(G)$ with $|S_1|,|S_2|\geq n/3$ such that
\begin{align}
d_{S_1}^+(x),d_{S_1}^-(x),d_{S_2}^+(x),d_{S_2}^-(x)\geq pn/12 \label{eqn:degrees}
\end{align}
for all $x\in V(G)$ and such that neither $S_1$ nor $S_2$ separates $M$. 

We will inductively construct spanning subgraphs $G_1\supseteq G_2\supseteq\dots\supseteq G_k$ of $G$, sets of vertices $Z_1\In Z_2\In\cdots\In Z_{k-1}$ and edge-disjoint cycles $C_1,\dots,C_{k-1}$ such that the following holds:
\begin{itemize}
\item[(P1)] $Z_i$ is the set of vertices of maximum degree in $G_i$,
\item[(P2)] $C_i$ is a cycle in $G_i$ that contains all vertices of $Z_i$,
\item[(P3)] if $i\geq2$, then $G_i$ is obtained from $G_{i-1}$ by removing the edges of $C_{i-1}$,
\item[(P4)] $\Delta(G_i)=\Delta(G)-2(i-1)$,
\item[(P5)] if $i\geq2$ and $d_{G_i}(v)<d_{G_{i-1}}(v)<\Delta(G_{i-1})$, then $v\in S_1$ if $i$ is even and $v\in S_2$ if $i$ is odd,
\item[(P6)] $G_k$ is regular,
\item[(P7)] $k\leq2\eta n$.
\end{itemize}
Let $G_1:=G$. 
Assume that for some $1\leq i\leq2\eta n$, we have already defined $G_1\supseteq\dots\supseteq G_i$, $Z_1\In\cdots\In Z_{i-1}$ and $C_1,\dots,C_{i-1}$ such that (P1)--(P5) are satisfied. If $G_i$ is regular, then stop. Otherwise, let $Z_i$ be the set of vertices of maximum degree in $G_i$. Since $G_{i-1}$ is Eulerian, we know that $Z_i\supseteq Z_{i-1}$.

We claim that $G_i[S_1\cup Z_i]$ and $G_i[S_2\cup Z_i]$ are Hamiltonian. Since $|S_1\cup Z_i|\geq n/3$, Proposition~\ref{prop:changes}(iii) implies that $G[S_1\cup Z_i]$ is lower-$(p,3\eps)$-regular. Let $H$ be the spanning subgraph of $G[S_1\cup Z_i]$ that contains the edges from $C_1,\dots,C_{i-1}$. Thus, $G[S_1\cup Z_i]\sm H=G_i[S_1\cup Z_i]$. As $\Delta(H)\leq2(i-1)\leq4\eta n\leq12\eta|S_1\cup Z_i|$, Proposition~\ref{prop:changes}(ii) implies that $G_i[S_1\cup Z_i]$ is lower-$(p,2\sqrt{12\eta})$-regular.\COMMENT{(ii) with $3\eps$ and $\eps'=\max\Set{6\eps,2\sqrt{12\eta}}$} Moreover, $\delta^0(G_i[S_1\cup Z_i])\geq pn/12-\Delta(H)\geq p|S_1\cup Z_i|/13$ by (\ref{eqn:degrees}). Thus, by Lemma~\ref{lem:hamcycle}(i), $G_i[S_1\cup Z_i]$ has a Hamilton cycle. The same applies to $S_2$.

Now, if $i$ is odd, then let $C_i$ be a Hamilton cycle in $G_i[S_1\cup Z_i]$, and if $i$ is even, then let $C_i$ be a Hamilton cycle in $G_i[S_2\cup Z_i]$. Set $G_{i+1}:=G_i\sm E(C_i)$.

By construction, (P1)--(P5) hold. 
Let us now show that the procedure will terminate after $k-1\leq2\eta n-1$ steps with an $r$-regular digraph $G_k$ for some $r\in \bN$. Indeed, first note that (P3) and (P5) together imply that
\begin{equation} \label{eqn:delta}
\delta(G_{i+1})\geq\delta(G)-(i+1).
\end{equation}
Thus, together with (P4), it follows that $$0\leq\Delta(G_{i+1})-\delta(G_{i+1})\leq\Delta(G)-2i-\delta(G)+i+1\leq\eta n-i+1$$ and so $i+1\leq2\eta n$.

(\ref{eqn:delta}) and (P7) imply that $\delta(G_k)\geq\delta(G)-k\geq pn-2\eta n$. Thus, $r\geq(p/2-\eta)n\geq pn/3$.
Since $E(G\sm G_k)=E(C_1)\cup\dots\cup E(C_{k-1})$, we have $\Delta(G\sm G_k)\leq2(k-1)\leq4\eta n$. Thus, $G_k$ is lower-$(p,4\sqrt{\eta})$-regular by Proposition~\ref{prop:changes}(ii) and therefore a robust $(4\sqrt{\eta},\tau)$-outexpander by Proposition~\ref{prop:REG2EXP}. Finally, we can apply Theorem~\ref{thm:hamdecrobexp} to decompose $G_k$ into Hamilton cycles. Let $\cC$ be the set consisting of all these Hamilton cycles together with the cycles $C_1,\dots,C_{k-1}$. We will show that $\cC$ is the required cycle decomposition of $G$.

Let $x$ be a vertex of maximum degree in $G$. Thus, $x\in Z_i$ for all $i<k$ and so $x$ is contained in every cycle in $\cC$. Hence, $|\cC|=d_G^+(x)=\Delta(G)/2$, as desired. By the choice of $S_1$ and $S_2$, all cycles in $\cC$ will be consistent with $M$. To see this, let $xy\in M$ and assume that $d_G(x)\leq d_G(y)$. Suppose that there is a cycle in $\cC$ which contains $x$ but not $y$. Let $i^\ast:=\min\set{i}{C_i\text{ contains }x\text{ but not }y}$. Since $V(C_{i^\ast})=Z_{i^\ast}\cup S_j$ for some $j\in\Set{1,2}$, and since neither $S_1$ nor $S_2$ separates $M$, it follows that $x\in Z_{i^\ast},y\notin Z_{i^\ast}$. In particular, $x$ has higher degree in $G_{i^\ast}$ than $y$, which is a contradiction, since by definition of $i^\ast$, any $C_j$ with $j<i^\ast$ which contains $x$ must also contain $y$. 
\endproof

Together with Lemma~\ref{lem:orient}, we obtain the following undirected version of Theorem~\ref{thm:Eulerian}, which immediately implies Theorem~\ref{thm:Eulerian plain}.

\begin{cor} \label{cor:Eulerian undirected}
Let $0<1/n_0\ll\eta,\eps\ll p<1$. Suppose $G$ is a lower-$(p,\eps)$-regular graph on $n\geq n_0$ vertices. Moreover, assume that $\Delta(G)-\delta(G)\leq\eta n$ and that $G$ is Eulerian. Let $M$ be any matching in the complete graph on $V(G)$. Then $G$ can be decomposed into $\Delta(G)/2$ cycles which are consistent with $M$.
\end{cor}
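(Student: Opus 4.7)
The strategy is to reduce the undirected statement to the already-proved directed Theorem~\ref{thm:Eulerian} by taking a suitable Eulerian orientation of $G$. First I would argue, exactly as at the start of the proof of Theorem~\ref{thm:Eulerian}, that lower-$(p,\eps)$-regularity together with $\Delta(G)-\delta(G)\leq\eta n$ forces $\delta(G)\geq pn/2$ (pick any set $S$ of size $\eps n$ and count edges from $S$ to its complement). Hence the hypotheses of Lemma~\ref{lem:orient} are met with $\alpha=p/2$, provided we choose the constants in the hierarchy $0<1/n_0\ll\eta,\eps\ll p<1$ small enough.

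Next, apply Lemma~\ref{lem:orient} to obtain an orientation $G'$ of $G$ which is Eulerian and lower-$(p/4,\eps)$-regular. Since each edge of $G$ is oriented exactly once, we have $d_{G'}(v)=d_G^+(v)+d_G^-(v)=d_G(v)$ for every vertex $v$, so $\Delta(G')=\Delta(G)$, $\delta(G')=\delta(G)$, and in particular $\Delta(G')-\delta(G')\leq\eta n$. The matching $M$ and the ordering convention $d_G(x_i)\leq d_G(y_i)$ transfer verbatim to $G'$, since the ordering is in terms of (undirected) degrees, which are preserved.

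Now apply Theorem~\ref{thm:Eulerian} to the digraph $G'$, with $p/4$ playing the role of $p$ (which only costs us a further tightening of $\eps$ and $\eta$ against $p$, still compatible with our hierarchy). This produces a decomposition of $E(G')$ into $\Delta(G')/2=\Delta(G)/2$ directed cycles, each consistent with $M$. Forgetting the orientations yields a decomposition of $E(G)$ into $\Delta(G)/2$ undirected cycles: these are genuine cycles of length at least $3$ because $G'$ is a proper orientation of the simple graph $G$ and so contains no $2$-cycles. Consistency with $M$ is a purely vertex-set condition and is therefore preserved under forgetting orientations, giving the required decomposition.

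There is essentially no obstacle beyond arranging the constant hierarchy so that both Lemma~\ref{lem:orient} (which outputs $p/4$-regularity) and Theorem~\ref{thm:Eulerian} (applied to $G'$ with parameter $p/4$) can be invoked simultaneously; this is routine.
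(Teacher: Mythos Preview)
Your proposal is correct and matches the paper's approach exactly: the paper simply states that Corollary~\ref{cor:Eulerian undirected} follows from Theorem~\ref{thm:Eulerian} ``together with Lemma~\ref{lem:orient}'', and your write-up fills in precisely those details (orient via Lemma~\ref{lem:orient}, note degrees and hence consistency are preserved, apply Theorem~\ref{thm:Eulerian} with $p/4$ in place of $p$, then forget orientations).
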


\section{Path decompositions} \label{sec:paths}

In this section, we will prove our quasirandom result concerning path and linear forest decompositions and then easily deduce Theorem~\ref{thm:gnp}. As mentioned earlier, any path decomposition of a graph $G$ must contain at least $\max\Set{odd(G)/2,\lceil\Delta(G)/2\rceil}$ paths, and $la(G)\geq\lceil\Delta(G)/2\rceil$. However, there are some simple cases in which these bounds cannot be attained. For example, suppose $G$ is $r$-regular and $r$ is even. Then it is impossible to decompose $G$ into $r/2$ paths or $r/2$ linear forests. This motivates the additional assumption in the following theorem that $G$ has a unique vertex of maximum degree. This additional assumption is stronger than what we would really need to prove Theorem~\ref{thm:pathforests}, but still enables us to deduce Theorem~\ref{thm:gnp}(ii)--(iii).

\begin{theorem} \label{thm:pathforests}
Let $0<1/n_0\ll\eta,\eps\ll p<1$. Suppose $G$ is a lower-$(p,\eps)$-regular graph on $n\geq n_0$ vertices such that $\Delta(G)-\delta(G)\leq\eta n$. Then the following hold.
\begin{itemize}
\item[(i)] $G$ can be decomposed into $\max\Set{odd(G)/2,\lceil(\Delta(G)+1)/2\rceil}$ paths. If $G$ has a unique vertex of maximum degree, then $G$ can be decomposed into $\max\Set{odd(G)/2,\lceil\Delta(G)/2\rceil}$ paths.
\item[(ii)] $G$ can be decomposed into $\lceil(\Delta(G)+1)/2\rceil$ linear forests. If $G$ has a unique vertex of maximum degree, then $G$ can be decomposed into $\lceil\Delta(G)/2\rceil$ linear forests.
\end{itemize}
\end{theorem}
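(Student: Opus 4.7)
The plan is to derive both parts of Theorem~\ref{thm:pathforests} from Corollary~\ref{cor:Eulerian undirected} by means of a single auxiliary vertex. Assume for definiteness that $G$ has a unique vertex $v^*$ of maximum degree, and set $m := \max\{odd(G)/2, \lceil\Delta(G)/2\rceil\}$ (the target in part~(i); the target in part~(ii) is $\lceil\Delta(G)/2\rceil$). Add a new vertex $w$ to $V(G)$ and join $w$ by a single edge to every vertex of $D := \{v : d_G(v) \text{ is odd}\}$, which makes every vertex of $V(G)$ even in the resulting graph $G^+$. In the easy case $odd(G)/2 \geq \lceil\Delta(G)/2\rceil$ one already has $d_{G^+}(w) = odd(G) = 2m = \Delta(G^+)$. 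In the hard case $odd(G)/2 < \lceil\Delta(G)/2\rceil$, set $k := m - odd(G)/2$ and choose a set $U$ of $k$ even-degree vertices in $V(G) \setminus (D \cup \{v^*\})$ (which exist in abundance by the lower-$(p,\eps)$-regularity of $G$); add two parallel edges from $w$ to each $u \in U$. The uniqueness of $v^*$ forces $d_G(u) \leq 2m - 2$, so $d_{G^+}(u) \leq 2m$, while $d_{G^+}(w) = |D| + 2k = 2m$. In both cases $G^+$ is Eulerian with $w$ a max-degree vertex of degree $2m$, and Proposition~\ref{prop:changes}(i) and~(ii) ensure that $G^+$ is still lower-$(p,\eps')$-regular with $\Delta(G^+) - \delta(G^+) = O(\eta n)$.

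Now apply (a mild multigraph variant of) Corollary~\ref{cor:Eulerian undirected} to decompose $G^+$ into exactly $m$ cycles, each of which contains the max-degree vertex $w$ and therefore uses exactly two edges at $w$. Since these cycles are Hamiltonian in the relevant subgraphs used in the proof of Theorem~\ref{thm:Eulerian}, none of them uses both copies of a parallel edge at $w$; consequently the two edges at $w$ in each cycle meet two \emph{distinct} neighbours, and deleting $w$ from the cycle produces a simple path in $G^+ - w = G$. This yields a decomposition of $G$ into $m$ paths, proving~(i) in the unique-maximum case. The non-unique version of~(i) allows one extra path, which is used to bypass the hard case construction: one decomposes into one cycle more than the target and then opens one of these cycles at an arbitrary edge.

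For part~(ii), if $\lceil\Delta/2\rceil = m$ the $m$ paths constructed above already form $m$ linear forests. Otherwise $m = odd(G)/2 > \lceil\Delta/2\rceil$ and we merge the $m$ paths into $\lceil\Delta/2\rceil$ linear forests by a proper colouring with $\lceil\Delta/2\rceil$ colours in which no colour class contains two paths sharing an internal vertex of $G$. Since any $v \ne v^*$ is internal in at most $\lceil\Delta/2\rceil - 1$ of the paths (and $v^*$ in at most $\lceil\Delta/2\rceil$), such a colouring is produced by a Vizing-type argument applied to the conflict multigraph whose vertices are the $m$ paths. The main technical obstacle is the hard case of~(i): the multigraph $G^+$ has multiplicity two only at $w$, and one must verify that the inductive cycle-extraction in the proof of Theorem~\ref{thm:Eulerian} can be arranged to consume all parallel edges before the final regular piece is passed to Theorem~\ref{thm:hamdecrobexp} (which is stated for simple graphs). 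This is achieved by prioritising the vertices of $U$ when choosing the Hamilton cycle at each inductive step, which is possible because $|U|$ is linear in $n$ and $U$ lies inside the lower-$\eps$-regular subgraphs used throughout that proof.
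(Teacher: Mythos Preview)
Your outline for~(i) is close to the paper's, but the multigraph workaround in the hard case is not just informal---it fails. The inductive phase of Theorem~\ref{thm:Eulerian} removes only $k-1\le 2\eta n$ cycles before handing the regular remainder $G_k$ to Theorem~\ref{thm:hamdecrobexp}, whereas the number $|U|=\lceil\Delta/2\rceil-odd(G)/2$ of parallel pairs at $w$ can be of order $pn$. So most parallel edges survive into $G_k$, and Theorem~\ref{thm:hamdecrobexp} (stated for simple digraphs) does not apply. The paper avoids this by first taking an Eulerian \emph{orientation} of $G$ (via Lemma~\ref{lem:orient}) and then adding, for each $u\in U$, one arc $wu$ and one arc $uw$; the resulting digraph $G'''$ is simple, and since every cycle in the decomposition of Theorem~\ref{thm:Eulerian} has length at least $n/3$, no cycle can use both $wu$ and $uw$. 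Deleting $w$ then automatically yields genuine paths in $G$.

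The more serious gap is your argument for~(ii) when $m=odd(G)/2>\lceil\Delta/2\rceil$. Your merging criterion---no two paths in a colour class share a vertex that is internal to both---does not produce linear forests. In your construction each $v\in D$ is the endpoint of exactly one path and internal to $(d_G(v)-1)/2$ others; placing the former together with any of the latter gives $v$ degree~$3$ in the union. Since no vertex is an endpoint of two of your paths, a colour class is a linear forest only if its paths are pairwise \emph{vertex}-disjoint, i.e.\ you must properly $\lceil\Delta/2\rceil$-colour the intersection graph of the paths. The cliques coming from single vertices of $G$ have size at most $\lceil\Delta/2\rceil$, but that does not bound the chromatic number, and no Vizing-type mechanism is available here.

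The paper never attempts such a merge. In the case $odd(G)\ge\Delta(G)$ it connects $w$ to only $2\lceil\Delta/2\rceil$ vertices of $D$, and for each remaining odd pair $\{a_i,b_i\}$ it either deletes the edge $a_ib_i\in E(G)$ (recording it in $E^\star$) or adds the non-edge $a_ib_i$ (recording it in $E^\circ$). The resulting Eulerian graph already has maximum degree $2\lceil\Delta/2\rceil$, so Corollary~\ref{cor:Eulerian undirected} yields exactly $\lceil\Delta/2\rceil$ cycles through $w$, hence $\lceil\Delta/2\rceil$ linear forests in $G\setminus E^\star$ after deleting $w$ and the $E^\circ$-edges. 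Each edge $a_ib_i\in E^\star$ is then inserted into some forest that omits $b_i$; because the cycle decomposition was chosen \emph{consistent with $E^\star$} (this is precisely why Theorem~\ref{thm:Eulerian} and Corollary~\ref{cor:Eulerian undirected} carry the matching parameter $M$), that forest also omits $a_i$, so $a_ib_i$ can be added as an isolated edge. This $E^\star/E^\circ$ device together with the consistency feature is the key idea you are missing for~(ii).
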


To prove Theorem~\ref{thm:pathforests}, we will construct a suitable Eulerian auxiliary graph $G'$ to which we can apply Theorem~\ref{thm:Eulerian} or Corollary~\ref{cor:Eulerian undirected}. The resulting cycle decomposition of $G'$ will be transformed into a path decomposition of $G$.

\proof
Let $D=\Set{a_1,b_1,a_2,b_2,\dots,a_s,b_s}$ be the set of odd degree vertices in $G$. If $a_ib_i\notin E(G)$, we still refer to the pair $\Set{a_i,b_i}$ as the edge $a_ib_i$, thereby meaning the respective edge in the complement of $G$.

\noindent\textbf{Case 1:} $odd(G)\geq\Delta(G)$

Let $G'$ be the graph obtained from $G$ in the following way. Add a new vertex $w$ and choose $\lceil\Delta(G)/2\rceil$ of the pairs $\Set{a_i,b_i}$. Connect $w$ to all the vertices in these pairs. For the remaining pairs, let $E^\star$ be the set of edges $a_ib_i\in E(G)$ and let $E^\circ$ be the set of edges $a_ib_i$ which are in the complement of $G$. Remove $E^\star$ from the graph and add $E^\circ$. Hence, $G'$ is Eulerian, $\Delta(G')-\delta(G')\leq\Delta(G)-\delta(G)+2$, and by Proposition~\ref{prop:changes}(i) and (ii), $G'$ is lower-$(p,4\eps)$-regular. We thus can apply Corollary~\ref{cor:Eulerian undirected} to  $G'$ to find a decomposition of $G'$ into $\Delta(G')/2$ cycles which are consistent with $E^\star$. Since $d_{G'}(w)=2\lceil\Delta(G)/2\rceil=\Delta(G')$, $w$ is contained in each of these cycles. This naturally gives a decomposition $\cP$ of $(G\sm E^\star)\cup E^\circ$ into $\lceil\Delta(G)/2\rceil$ paths.

We now use $\cP$ to prove (i). 
Remove the edges of $E^\circ$ one by one from the paths in $\cP$. Every time when an edge is removed, the number of paths increases by one. Finally, for every edge in $E^\star$, simply take this edge as a path of length one. Thus, we have a decomposition of $G$ into $\lceil\Delta(G)/2\rceil+|E^\circ|+|E^\star|=odd(G)/2$ paths.

We next use $\cP$ to prove (ii).
When we are looking for linear forests, we can remove the edges of $E^\circ$ from the paths in $\cP$ and thus obtain a collection $\cF$ of $\lceil\Delta(G)/2\rceil$ linear forests in $G-E^\star$. We now distribute the edges of $E^\star$ among these linear forests. To this end, consider any $a_ib_i\in E^\star$. We may assume that $d_G(a_i)\leq d_G(b_i)$. Since $a_ib_i$ was deleted from $G$, $d_{G'}(b_i)<\Delta(G)$. Moreover, $b_i$ is not a leaf in any of the linear forests in $\cF$. Therefore, there is a linear forest $F\in\cF$ which does not contain $b_i$. Since the cycle decomposition of $G'$ was chosen to be consistent with $E^\star$, $F$ does not contain $a_i$ either. Hence, we can add $a_ib_i$ to $F$ to obtain a new linear forest. We proceed analogously with the other edges in $E^\star$ to obtain a decomposition of $G$ into $\lceil\Delta(G)/2\rceil$ linear forests.

\noindent\textbf{Case 2:} $odd(G)<\Delta(G)$

Let $E^\star$ contain all edges $a_ib_i\in E(G)$ and let $E^\circ$ contain all edges $a_ib_i$ which are in the complement of $G$. Let $G':=(G\sm E^\star)\cup E^\circ$. Hence, $G'$ is Eulerian and lower-$(p,2\eps)$-regular by Proposition~\ref{prop:changes}(ii). By Lemma~\ref{lem:orient}, there exists an orientation $G^{orient}$ of $G'$ which is Eulerian and lower-$(p/4,2\eps)$-regular. Let $G''$ be the digraph obtained from $G^{orient}$ in the following way. Add a new vertex $w$ and connect it to all vertices of $D$ by one edge as follows. 
If $a_ib_i\in E(G^{orient})$, remove it and add $a_iw$ and $wb_i$. If $b_ia_i\in E(G^{orient})$, remove it and add $b_iw$ and $wa_i$. Else, add the edges $a_ib_i$, $b_iw$ and $wa_i$. Note that the graph obtained from $G''-w$ by ignoring the orientation of its edges is precisely $G$. 
\COMMENT{Note that we cannot add $w$ first and connect it to all odd degree vertices, then apply the orientation lemma, as the degree of $w$ might be too small.}

\noindent\emph{Case 2.1:} $G$ has a unique vertex of maximum degree and $\Delta(G)$ is even.

Let $k:=(\Delta(G)-odd(G))/2$. Since $G$ has a unique vertex of maximum degree, we can choose $k$ distinct vertices not contained in $D$ and not of maximum degree and connect $w$ to each of them by two edges which are oriented in opposite directions.\COMMENT{There are $n-odd(G)-1$ suitable vertices and $k\leq(n-odd(G)-1)/2$.} The digraph $G'''$ obtained from $G''$ in this way is Eulerian and $d_{G'''}(w)=\Delta(G''')=\Delta(G)$. Note that Proposition~\ref{prop:changes} implies that $G'''$ is lower-$(p/4,8\eps)$-regular.\COMMENT{$G^{orient}$ is lower-$(p/4,2\eps)$-regular, $G'''$ obtained from $G^{orient}$ by (i) adding $w$ and some edges at $w$ and (ii)  deleting $E^\circ$.} Decompose $G'''$ into $\Delta(G)/2$ cycles by means of Theorem~\ref{thm:Eulerian}. Recall that the graph obtained from $G''-w=G'''-w$ by ignoring the orientation of its edges is precisely $G$. Together with the fact that $d_{G'''}(w)=\Delta(G''')$, this implies that the cycle decomposition of $G'''$ yields a decomposition of $G$ into $\Delta(G)/2$ paths (and thus also a decomposition into $\Delta(G)/2$ linear forests).

\noindent\emph{Case 2.2:} $G$ does not have a unique vertex of maximum degree or $\Delta(G)$ is odd.

Let $k':=\lceil(\Delta(G)+1-odd(G))/2\rceil$. We pick $k'$ distinct vertices not contained in $D$ and connect $w$ to each of them by two edges which are oriented in opposite directions.\COMMENT{There are $n-odd(G)$ suitable vertices and $k'\leq\lceil(n-odd(G))/2\rceil$.} Let $G'''$ be the digraph obtained from $G''$ in this way. Then $d_{G'''}(w)=odd(G)+2k'=\Delta(G''')=2\lceil(\Delta(G)+1)/2\rceil$. So we can argue similarly as in Case~2.1 to obtain a decomposition of $G$ into $\lceil(\Delta(G)+1)/2\rceil$ paths (and thus also a decomposition into $\lceil(\Delta(G)+1)/2\rceil$ linear forests).
\endproof

\lateproof{Theorem~\ref{thm:gnp}}
Let $G\sim\gnp$. Let $D$ denote the set of odd degree vertices in $G$. Recall that \whp $G$ has the properties stated in Lemma~\ref{lem:gnpbasics}. Moreover, Lemma~\ref{lem:odd matching} implies that \whp $G[D]$ has a perfect matching $M$.

To prove (i), we can apply Theorem~\ref{thm:Eulerian plain} to $G-M$. Since $\Delta(G-M)=2\lfloor\Delta(G)/2\rfloor$, $G-M$ can be decomposed into $\lfloor\Delta(G)/2\rfloor$ cycles.

Parts (ii) and (iii) follow immediately from Theorem~\ref{thm:pathforests}.
\endproof

\section{Linear arboricity} \label{sec:forests}

Recall that Theorem~\ref{thm:gnp}(iii) verifies the linear arboricity conjecture for $\gnp$ when $p$ is constant. In this section we additionally verify this conjecture for the following situations: (i) for $\gnp$ where $p\rightarrow0$ but $p$ is not too small, and (ii) for large and sufficiently dense regular graphs. We obtain these results as corollaries of (i) an optimal Hamilton cover result in \cite{HKLO}, and (ii) the recent proof of the Hamilton decomposition conjecture in \cite{CBKLOT}.

\begin{theorem}[Hefetz, K{\"u}hn, Lapinskas, Osthus \cite{HKLO}] \label{thm:hamcover}
Let $\frac{\log^{117}n}{n}\leq p\leq 1-n^{-1/8}$ and $G\sim\gnp$. Then \whp the edges of $G$ can be covered by $\lceil\Delta(G)/2\rceil$ Hamilton cycles.
\end{theorem}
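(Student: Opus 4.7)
The plan is to combine a near-optimal Hamilton packing of $G$ with a small number of Hamilton cycles that mop up the remaining edges, exploiting that a cover (unlike a decomposition) may reuse edges. A Chernoff bound gives $d_G(v)=np\pm O(\sqrt{np\log n})$ for every $v$ \whp, so setting $\delta:=\delta(G)$, $k:=\lceil\Delta(G)/2\rceil$ and $t:=k-\lfloor\delta/2\rfloor$, one has $t=O(\sqrt{np\log n})$; this $t$ is the budget of Hamilton cycles available on top of a perfect packing. The lower bound $p\geq\log^{117}n/n$ is calibrated so that $G$ is a sufficiently strong robust expander \whp and remains so after the removal of any spanning subgraph of maximum degree $o(np)$, which is the regime in which sparse analogues of the K\"uhn--Osthus Hamilton decomposition result (cf.\ Theorem~\ref{thm:hamdecrobexpundirected}) apply.

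I would first extract a $2\lfloor\delta/2\rfloor$-regular spanning subgraph $G^\star \In G$ by shaving off at each vertex just enough edges to equalise the degrees; this is possible \whp since $\Delta(G)-\delta(G)=O(\sqrt{np\log n})$ is small compared to $pn$, and one can arrange the shaving so that the leftover $R:=G\sm G^\star$ has $\Delta(R)\leq 2t-1$. A sparse analogue of Theorem~\ref{thm:hamdecrobexpundirected} then decomposes $G^\star$ into $\lfloor\delta/2\rfloor$ Hamilton cycles $H_1,\dots,H_{\lfloor\delta/2\rfloor}$, covering every edge of $G^\star$. It remains to cover the edges of $R$ with at most $t$ further Hamilton cycles of $G$.

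The main obstacle is this final covering step: a naive one-Hamilton-cycle-per-edge approach would require $|R|/2=\Theta(n\sqrt{np\log n})$ cycles, overshooting by a factor of $n$. I would first edge-colour $R$ into $\Delta(R)+1\leq 2t$ matchings via Vizing's theorem and pair them up into $t$ linear forests $F_1,\dots,F_t$ of maximum degree $2$, each containing $\Theta(n)$ edges of $R$. For each $i$ one then extends $F_i$ to a Hamilton cycle $C_i^\prime$ of $G$ by adding edges of $G\sm F_i$ only at path-endpoints and at vertices isolated in $F_i$, so that every edge of $F_i$ survives in $C_i^\prime$. The existence of such an extension is the heart of the argument and requires a P\'osa-style rotation--extension lemma tailored to the sparse regime: since $\Delta(F_i)\leq 2$, Proposition~\ref{prop:changes}(ii) together with Proposition~\ref{prop:REG2EXP} shows that $G\sm F_i$ is still a robust expander of linear degree, so endpoint sets of partial Hamilton paths expand and can be closed up through the vertices unsaturated by $F_i$. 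Iterating over $i$ yields $C_1^\prime,\dots,C_t^\prime$ covering all of $R$, and together with the $H_j$ we obtain a cover of $G$ by $\lfloor\delta/2\rfloor+t=\lceil\Delta(G)/2\rceil$ Hamilton cycles. The hard part is really the quantitative rotation--extension step: the statement that ``a linear forest of maximum degree $2$ in a sparse robust expander can be completed to a Hamilton cycle'' has to be proved from scratch in the sparse regime, and this is precisely where the polylogarithmic lower bound on $p$ is consumed.
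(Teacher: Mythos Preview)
This theorem is not proved in the present paper: it is quoted verbatim from \cite{HKLO} and used as a black box to derive Corollary~\ref{cor:forests random}. There is therefore no ``paper's own proof'' to compare your proposal against.

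As to the proposal itself, the outline (near-optimal Hamilton packing plus a small covering phase for the leftover) is indeed the shape of the argument in \cite{HKLO}, but several of the tools you invoke are not available in this paper. Proposition~\ref{prop:changes}, Proposition~\ref{prop:REG2EXP} and Theorem~\ref{thm:hamdecrobexpundirected} are all stated for the dense regime (constant $p$, linear minimum degree), and there is no ``sparse analogue of Theorem~\ref{thm:hamdecrobexpundirected}'' proved or cited here; establishing one is itself a substantial piece of work. Likewise, the step ``extend a given linear forest of maximum degree $2$ to a Hamilton cycle of $G$'' in the range $p=\mathrm{polylog}(n)/n$ is not a consequence of anything in this paper and is, as you yourself note, the heart of the matter. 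So your sketch identifies the right architecture but leans on ingredients that would have to be imported from \cite{HKLO} (or reproved), which is precisely why the present paper cites the result rather than proving it.
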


\begin{cor} \label{cor:forests random}
Let $\frac{\log^{117}n}{n}\leq p=o(1)$ and $G\sim\gnp$. Then \whp $G$ can be decomposed into $\lceil\Delta(G)/2\rceil$ linear forests.
\end{cor}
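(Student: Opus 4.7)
The plan is to derive Corollary~\ref{cor:forests random} almost immediately from Theorem~\ref{thm:hamcover} by observing that any cover by Hamilton cycles can be turned into a decomposition into linear forests just by assigning each edge to a single cycle that contains it.

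More concretely, I would first invoke Theorem~\ref{thm:hamcover} (applicable since $\frac{\log^{117}n}{n}\le p=o(1)\le 1-n^{-1/8}$ for large $n$) to conclude that, a.a.s., there exist Hamilton cycles $C_1,\ldots,C_k$ of $G$ with $k:=\lceil\Delta(G)/2\rceil$ whose union covers $E(G)$. I would also note the standard lower bound $la(G)\ge\lceil\Delta(G)/2\rceil$, which holds for every graph, since each linear forest contributes at most $2$ edges at any given vertex.

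Next I would construct the decomposition. For each edge $e\in E(G)$, pick (arbitrarily) an index $\varphi(e)\in\{1,\ldots,k\}$ with $e\in E(C_{\varphi(e)})$; this is possible precisely because $C_1,\ldots,C_k$ cover $E(G)$. Define $F_i:=\{e\in E(G):\varphi(e)=i\}$ for each $i\in\{1,\ldots,k\}$. Then the $F_i$ partition $E(G)$ by construction, and each $F_i\subseteq E(C_i)$ is a subset of the edge set of a cycle. But any subgraph of a cycle is a disjoint union of paths, hence a linear forest. Therefore $F_1,\ldots,F_k$ constitute a decomposition of $G$ into $\lceil\Delta(G)/2\rceil$ linear forests, matching the lower bound.

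There is essentially no obstacle here: the only nontrivial ingredient is Theorem~\ref{thm:hamcover}, and the passage from a Hamilton cover to a linear forest decomposition is a one-line combinatorial observation. Consequently the corollary will be stated and proved in just a few lines.
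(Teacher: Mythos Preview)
Your argument has a genuine gap: the sentence ``any subgraph of a cycle is a disjoint union of paths'' is false, since the full cycle $C_i$ is a subgraph of itself but is not a linear forest. So you must rule out $F_i=E(C_i)$. However, Theorem~\ref{thm:hamcover} gives no control over how the Hamilton cycles in the cover interact; in particular nothing prevents one of the $C_i$ from being edge-disjoint from all the others, in which case every edge of $C_i$ is forced to have $\varphi(e)=i$ and $F_i=C_i$. (For instance, if $\Delta(G)$ is even and $G$ contains a Hamilton cycle $C$ such that $G\setminus C$ also admits an optimal Hamilton cover, one obtains exactly this situation.) Your ``arbitrary'' choice of $\varphi$ cannot avoid this, and there is no easy local repair, since moving an edge of such a $C_i$ to another class may destroy that class being a subgraph of its cycle.

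The paper's proof uses essentially the same cover-to-decomposition idea but first adds a new vertex $w$, joining it to each old vertex independently with probability $p$, so that $G'\sim G_{n+1,p'}$. One then applies Theorem~\ref{thm:hamcover} to $G'$; since every Hamilton cycle of $G'$ passes through $w$, deleting $w$ turns each cycle into a Hamilton \emph{path} of $G$, and any subgraph of a path really is a linear forest. This yields a decomposition of $G$ into $\lceil\Delta(G')/2\rceil$ linear forests, and the remaining (short) work is to check that a.a.s.\ $\Delta(G')=\Delta(G)$, using that $G$ has a unique vertex of maximum degree and that $p=o(1)$. So your overall strategy is right, but you need this extra-vertex trick (or some equivalent device) to break the potential full cycles.
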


\proof
We note that for the given range of $p$, Lemma~\ref{lem:gnpbasics}(iii) still holds, which again can be seen by applying Theorem 3.15 from \cite{B}. Let $G'$ be obtained from $G$ by adding a new vertex $w$ and adding each potential edge at $w$ independently with probability $p$. Hence, $G'\sim G_{n+1,p'}$, where (as a function of the order $n+1$ of the graph) $p'(n+1)=p(n)$.
By Theorem~\ref{thm:hamcover}, \whp the edges of $G'$ can be covered by $\lceil\Delta(G')/2\rceil$ Hamilton cycles. Clearly, this Hamilton cycle cover of $G'$ yields a decomposition of $G$ into $\lceil\Delta(G')/2\rceil$ linear forests. It remains to check that \whp $\Delta(G')=\Delta(G)$. \Whp $G$ has a unique vertex $v_{max}$ of maximum degree. Since $p=o(1)$, it follows that \whp $d_{G'}(v_{max})=d_G(v_{max})$. Moreover, \whp $G'$ has a unique vertex of maximum degree. The probability that $w$ is this very vertex is $1/(n+1)=o(1)$. Hence, \whp $\Delta(G')=d_{G'}(v_{max})=\Delta(G)$.
\endproof

With more work, the above argument can also be extended to the case when $p$ is constant, which would give an alternative proof of Theorem~\ref{thm:gnp}(iii).

We now use the result of \cite{CBKLOT} mentioned earlier to deduce that the linear arboricity conjecture holds for large and sufficiently dense regular graphs.

\begin{theorem}[Csaba, K{\"u}hn, Lo, Osthus, Treglown \cite{CBKLOT}] \label{thm:hamdec}
There exists an $n_0\in\bN$ such that whenever $n\geq n_0$, $d\geq\lfloor n/2\rfloor$ and $G$ is a $d$-regular graph on $n$ vertices, then $G$ has a decomposition into Hamilton cycles and at most one perfect matching.
\end{theorem}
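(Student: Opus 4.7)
The plan is to prove Theorem~\ref{thm:hamdec} by reducing it to an application of Theorem~\ref{thm:hamdecrobexpundirected}. First, if $d$ is odd we extract a single perfect matching $M$ from $G$, leaving a $(d-1)$-regular graph with $d-1$ even; such a matching can be obtained by a K\"onig/Hall-type argument applied to a carefully chosen bipartite auxiliary, exploiting that $d\geq\lfloor n/2\rfloor$ is significantly above the Dirac threshold (so that a 1-factor is guaranteed and has flexibility). It therefore suffices to Hamilton-decompose the remaining graph, and from now on I may assume $d$ is even and the task is to split $G$ into exactly $d/2$ edge-disjoint Hamilton cycles.

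In order to apply Theorem~\ref{thm:hamdecrobexpundirected} one needs to verify that $G$ is a robust $(\nu,\tau)$-expander for suitable constants $\nu,\tau>0$ and that $d\geq\alpha n$ for some constant $\alpha>0$; the latter holds with $\alpha=1/2$ by hypothesis. In the \emph{generic regime}, where $G$ is far (in edit distance) from both the disjoint union of two cliques of order roughly $n/2$ and from a balanced bipartite graph, robust expansion can be established directly: since every vertex has at least $n/2$ neighbours and $G$ is far from these two obstructions, for any $S\subseteq V(G)$ with $\tau n\leq|S|\leq(1-\tau)n$ one shows that the number of vertices with at least $\nu n$ neighbours in $S$ exceeds $|S|+\nu n$, by an averaging argument combined with the non-extremal assumption. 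In this generic regime Theorem~\ref{thm:hamdecrobexpundirected} then directly yields the desired Hamilton decomposition.

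The main obstacle lies in the \emph{extremal regime}, where $G$ is close to one of the two structures above. Here $G$ itself is not a robust expander (the canonical cut carries few crossing edges), so Theorem~\ref{thm:hamdecrobexpundirected} cannot be applied to $G$ in a black-box way. My plan is to treat each extremal type via a more explicit construction: identify the near-partition $V(G)=A\cup B$, observe that linearly many edges must cross the cut (since $\delta(G)\geq n/2$ forces this in both extremal types), and then build the $d/2$ Hamilton cycles by distributing the crossing edges evenly among them and splicing long paths inside $G[A]$ and $G[B]$ through those allocated crossing edges, finishing with a switching/absorbing step that repairs any local defects. The fine balance needed -- ensuring simultaneously that every vertex occurs exactly once per cycle and every edge is used exactly once -- will require detailed structural information about the two near-extremal configurations and careful reservation of an absorber before the bulk of the construction. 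This extremal case is the step I expect to be the main obstacle and where the bulk of the technical work concentrates.
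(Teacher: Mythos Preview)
Theorem~\ref{thm:hamdec} is not proved in this paper at all: it is quoted as a result of Csaba, K\"uhn, Lo, Osthus and Treglown from~\cite{CBKLOT} and then used as a black box to derive Corollary~\ref{cor:forests large}. There is therefore no ``paper's own proof'' to compare your proposal against.

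As for the proposal itself, your high-level strategy is in fact the one adopted in~\cite{CBKLOT}: a stability split into a non-extremal case, where $G$ is a robust expander and Theorem~\ref{thm:hamdecrobexpundirected} applies directly, and two extremal cases where $G$ is close to two disjoint cliques or to a balanced complete bipartite graph. Your reduction from odd $d$ to even $d$ via a perfect matching is also correct (when $d$ is odd $n$ must be even, and $\delta(G)\geq n/2$ guarantees a Hamilton cycle and hence a perfect matching; the leftover $(d-1)$-regular graph still has linear degree, which is all Theorem~\ref{thm:hamdecrobexpundirected} needs). So in the non-extremal regime your plan is essentially complete.

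The genuine gap is the extremal regime. Your description --- ``distribute the crossing edges evenly, splice long paths inside $G[A]$ and $G[B]$, and repair defects with a switching/absorbing step'' --- names the right ingredients but hides essentially all of the content. In~\cite{CBKLOT} this part occupies well over a hundred pages: one must first classify and isolate \emph{exceptional} vertices whose neighbourhood is atypically distributed across the cut, reserve dedicated absorbing structures for them in advance, regularise the two sides so that each can itself be Hamilton-decomposed (again via the robust-expander machinery), and then interleave these decompositions with the crossing edges so that every one of the $d/2$ cycles is simultaneously Hamiltonian. None of the tools developed in the present paper suffice for this; in particular, Theorem~\ref{thm:hamdecrobexpundirected} cannot be applied to $G[A]$ or $G[B]$ without substantial preprocessing, since these induced subgraphs need not be regular. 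Treat your extremal paragraph as an outline of a long separate project rather than a proof sketch.
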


\begin{cor} \label{cor:forests large}
For sufficiently large $n$, every $d$-regular graph $G$ on $n$ vertices with $d\geq\lfloor(n-1)/2\rfloor$ has a decomposition into $\lceil(d+1)/2\rceil$ linear forests.
\end{cor}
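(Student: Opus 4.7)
My plan is to apply Theorem~\ref{thm:hamdec} as the main tool and refine the resulting Hamilton-type decomposition into a linear forest decomposition. Assume first that $d\ge\lfloor n/2\rfloor$, so that Theorem~\ref{thm:hamdec} yields $G=C_1\cup\dots\cup C_k\cup M'$, where the $C_i$ are Hamilton cycles, $M'$ is a perfect matching (when $d$ is odd) or empty (when $d$ is even), and $k=\lfloor d/2\rfloor$. Note that $k+1=\lceil(d+1)/2\rceil$ matches the target.

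The plan is to remove one edge $e_i\in C_i$ from each Hamilton cycle, so that $F_i:=C_i-e_i$ is a Hamilton path and hence a linear forest, and then to pool all deleted edges with $M'$ into one additional linear forest $F_{k+1}:=M'\cup\{e_1,\dots,e_k\}$. This produces $k+1$ linear forests covering $E(G)$, so the problem reduces to choosing the $e_i$ so that $F_{k+1}$ really is a linear forest: its maximum degree must be at most $2$ (which, given that $M'$ already contributes degree $1$ at every vertex when present, forces $\{e_1,\dots,e_k\}$ to be a matching) and it must be acyclic (no alternating cycle with $M'$, and no cycle among the $e_i$ themselves).

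This rainbow-type edge selection is the combinatorial heart of the proof and the main obstacle. Attempting to pick the $e_i$ greedily, one maintains the invariant that $L_{i-1}:=M'\cup\{e_1,\dots,e_{i-1}\}$ is a linear forest: an edge of $C_i$ is forbidden as $e_i$ only if it is incident to a degree-$2$ vertex of $L_{i-1}$ or joins the two endpoints of a single path-component, and both counts are bounded linearly in $i$ (together with $n/2$ when $M'$ is present). This leaves plenty of valid choices among the $n$ edges of $C_i$ whenever $d$ is bounded away from $n$. In the densest regime, where $d$ approaches $n-1$, the greedy bound becomes tight, and I would instead use a local swap/rotation argument: start from any transversal, then successively replace offending $e_i$'s with alternative edges of $C_i$, exploiting the $n$ candidates inside each Hamilton cycle as slack until the linear-forest condition for $F_{k+1}$ is satisfied.

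The remaining case $n$ even and $d=n/2-1$ lies just outside the hypothesis of Theorem~\ref{thm:hamdec}. I would handle it separately, either by passing to the complement $G^c$, which is $(n/2)$-regular and covered by Theorem~\ref{thm:hamdec}, and transferring the resulting structural information back to $G$, or by augmenting $G$ with a single auxiliary vertex to enter the theorem's regime and then discarding the auxiliary structure at the end.
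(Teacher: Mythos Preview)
Your approach diverges from the paper's, and the step you yourself flag as ``the combinatorial heart'' is a genuine gap. Having decomposed $G$ into Hamilton cycles $C_1,\dots,C_k$ (and possibly a perfect matching $M'$), you must choose $e_i\in C_i$ so that $M'\cup\{e_1,\dots,e_k\}$ is a linear forest. Your own greedy count of roughly $3(i-1)+n/2$ forbidden edges exceeds $n$ once $i>n/6$, while $k$ can be as large as $(n-1)/2$. The fallback ``local swap/rotation argument'' is not an argument: you give no invariant, no potential function, and no reason why swaps terminate rather than cycle. To see the difficulty, take $G=K_n$ with $n$ even; then $k=n/2-1$ and $F_{k+1}$ would have $n-1$ edges on $n$ vertices, so it must be a Hamilton path containing $M'$ with exactly one edge from each $C_i$. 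Establishing that an arbitrary Hamilton decomposition of $K_n$ admits such a rainbow transversal is a substantial statement in its own right, not something to hand-wave. Your boundary case $d=n/2-1$ is also left vague: a Hamilton decomposition of $\overline{G}$ does not in any obvious way produce linear forests in $G$.

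The paper sidesteps all of this by augmenting \emph{first}, and doing so in every case rather than only at the boundary. It adds one or two dummy vertices (together with a suitable matching from $\overline{G}$) to build a regular supergraph $G'$ of even degree meeting the hypothesis of Theorem~\ref{thm:hamdec}, and then decomposes $G'$ into Hamilton cycles. Deleting the dummy vertices and the added matching edges from each Hamilton cycle of $G'$ automatically leaves a linear forest in $G$ --- no rainbow selection is required. The auxiliary-vertex trick you mention only as a last resort is precisely the device the paper uses throughout.
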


\begin{proof}
In our argument below we will use the fact that any graph $H$ with minimum degree at least $r$ has a matching that covers at least $r$ vertices.\COMMENT{To see this, consider a longest path in $H$. By the minimum degree condition, such a path contains at least $r$ edges. By picking every other edge, we get a matching of size at least $\lceil r/2\rceil$.}

\noindent\textbf{Case 1:} $d$ is odd.

Then $n$ must be even. Clearly, the complement $\overline{G}$ of $G$ is $(n-1-d)$-regular. Hence, there is a matching $M$ in $\overline{G}$ that covers $n-(d+1)$ vertices. Let $G'$ be the graph obtained from $G$ by adding the edges of $M$ and joining a new vertex $w$ to the $d+1$ vertices not covered by $M$. Hence, $G'$ is $(d+1)$-regular and $d+1\geq\lfloor|G'|/2\rfloor$. By Theorem~\ref{thm:hamdec} and because $d+1$ is even, $G'$ has a decomposition into $(d+1)/2$ Hamilton cycles. By removing $w$ and the edges of $M$ from each of these Hamilton cycles, we obtain a decomposition of $G$ into $(d+1)/2$ linear forests.

\noindent\textbf{Case 2:} $d$ is even and $n$ is even.

Let $M$ be a matching in $\overline{G}$ that covers $n-(d+2)$ vertices. Since $\overline{G}\sm M$ has minimum degree at least $n-(d+2)$, $\overline{G}\sm M$ contains another matching $M'$ that covers $n-(d+2)$ vertices. Let $G'$ be the graph obtained from $G$ by adding the edges of $M$ and $M'$ and two new vertices $w,w'$ such that $w$ is joined to the $d+2$ vertices in $G$ not covered by $M$ and $w'$ is joined to the $d+2$ vertices in $G$ not covered by $M'$. Thus, $G'$ is $(d+2)$-regular and $d+2\geq\lfloor|G'|/2\rfloor$. By Theorem~\ref{thm:hamdec} and because $d+2$ is even, $G'$ has a decomposition into $(d+2)/2$ Hamilton cycles. By removing $w$, $w'$ and the edges of $M\cup M'$ from each of these Hamilton cycles, we obtain a decomposition of $G$ into $(d+2)/2=\lceil(d+1)/2\rceil$ linear forests.

\noindent\textbf{Case 3:} $d$ is even and $n$ is odd.

Let $M$ be a matching in $\overline{G}$ that covers $n-(d+1)$ vertices. Let $G'$ be the graph obtained from $G$ by adding the edges of $M$ and joining a new vertex $w$ to the $d+1$ vertices not covered by $M$. Then $G'$ is $(d+1)$-regular and $d+1\geq\lfloor(|G'|-1)/2\rfloor$. By Case 1, $G'$ can be decomposed into $(d+2)/2=\lceil(d+1)/2\rceil$ linear forests. As $G$ is a subgraph of $G'$, $G$ can be decomposed into the same number of linear forests.
\end{proof}

\section{Edge colourings} \label{sec:colouring}

In this section we build on Theorem~\ref{thm:hamdecrobexpundirected} to prove the overfull subgraph conjecture for dense quasirandom graphs of even order. We need the following well-known result on multigraphic degree sequences.

\begin{theorem}[Hakimi \cite{Ha}] \label{thm:multigraphic}
Let $0\leq d_n\leq\dots\leq d_1$ be integers. Then there exists a multigraph $G$ on vertices $x_1,\dots,x_n$ such that $d_G(x_i)=d_i$ for all $i$ if and only if $\sum_{i=1}^n d_i$ is even and $\sum_{i>1}d_i\geq d_1$.
\end{theorem}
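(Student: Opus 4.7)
My plan is to verify necessity in one stroke and then prove sufficiency by induction on the total degree $S := \sum_{i=1}^n d_i$.

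Necessity is immediate. The handshake identity $\sum_{i} d_G(x_i) = 2\,e(G)$ forces $S$ to be even. For the second condition, every edge incident to $x_1$ has its other endpoint in $\{x_2,\ldots,x_n\}$ since loops are forbidden, so the $d_1$ edges at $x_1$ contribute $d_1$ in total to $\sum_{i>1} d_G(x_i)$, giving $d_1 \leq \sum_{i>1} d_i$.

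For sufficiency, I would induct on $S$, with the trivial base case $S=0$ realised by the empty multigraph. For the inductive step, assume $S \geq 2$ and (without loss of generality) $d_1 \geq d_2 \geq \ldots \geq d_n$. The hypothesis $\sum_{i>1} d_i \geq d_1 \geq 1$ together with the ordering forces $d_2 \geq 1$. The plan is to add a single (non-loop) edge between $x_1$ and $x_2$ and apply the inductive hypothesis to the sequence $(d_1-1, d_2-1, d_3, \ldots, d_n)$, which has even sum $S-2$.

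The main obstacle is verifying that this reduced sequence still satisfies the max-versus-sum condition after being re-sorted. If $d_1 - 1$ remains the maximum of the new sequence, the inequality to be checked is exactly $d_1 \leq \sum_{i>1} d_i$, which is our standing hypothesis. The only delicate case is $d_1 = d_2 = d_3$: the maximum of the reduced sequence is now $d_3 = d_1$, and one must check $d_1 \leq (d_1 - 1) + (d_2 - 1) + d_4 + \ldots + d_n$, equivalently $\sum_{i>1} d_i \geq d_1 + 2$. When $d_1 \geq 2$ this is immediate from $\sum_{i>1} d_i \geq d_2 + d_3 = 2d_1 \geq d_1 + 2$. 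When $d_1 = 1$, the three values $d_1 = d_2 = d_3 = 1$ contribute an odd amount to $S$, so the parity of $S$ forces $\sum_{i>3} d_i \geq 1$ and hence $\sum_{i>1} d_i \geq 3 = d_1 + 2$. The induction therefore goes through, and iterating the construction yields the desired multigraph.
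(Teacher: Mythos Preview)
Your argument is correct. Note, however, that the paper does not actually supply its own proof of this statement: Theorem~\ref{thm:multigraphic} is quoted from Hakimi~\cite{Ha} and used as a black box, with only the remark that ``the inductive proof yields a polynomial time algorithm''. Your induction on the total degree $S$, reducing by a single $x_1x_2$ edge at each step, is one clean way to carry out such an inductive proof, and the case analysis you give for the re-sorted maximum (in particular the delicate sub-case $d_1=d_2=d_3$) is complete.
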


Though it is not explicitly stated in \cite{Ha}, the inductive proof yields a polynomial time algorithm which finds an appropriate multigraph if it exists.\COMMENT{The algorithm for graphic sequences is known as the Havel-Hakimi-algorithm, easy to see running time is $\O(n^2\log n)$ and can be improved to $\O(n^2)$.} Our strategy in proving Theorem~\ref{thm:colouring} is similar to that of Theorem~\ref{thm:Eulerian}, yet different in detail. Roughly speaking, instead of inductively removing cycles, we aim to remove paths in order to make our graph regular and then apply Theorem~\ref{thm:hamdecrobexpundirected} to decompose the regular remainder into Hamilton cycles. We can then simply colour each path with two colours and, since our graph has even order, each Hamilton cycle with two colours. An auxiliary multigraph obtained from Theorem~\ref{thm:multigraphic} will tell us which vertices to choose as the endvertices of the paths to be removed. In order to actually find these paths, we observe that lower-$(p,\eps)$-regular graphs contain `spanning linkages' for arbitrary pairs of vertices.

\begin{lemma} \label{lem:pathcontrol}
Let $0<1/n_0\ll\eps\ll\alpha,p<1$.
Let $G$ be a lower-$(p,\eps)$-regular graph on $n\geq n_0$ vertices such that $\delta(G)\geq\alpha n$. Moreover, let $M=\Set{a_1b_1,\dots,a_mb_m}$ be a matching in the complete graph on $V(G)$ of size at most $\alpha n/5$. Then there exist vertex-disjoint paths $P_1,\dots,P_m$ in $G$ such that $\bigcup V(P_i)=V(G)$ and $P_i$ joins $a_i$ to $b_i$, and these paths can be found in polynomial time.
\end{lemma}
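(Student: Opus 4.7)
The plan is to reserve the pair $(a_1,b_1)$ for one long `almost-Hamilton' path and cover each of the remaining pairs $(a_i,b_i)$ with a very short path of length at most $3$, and then finish by taking a Hamilton path from $a_1$ to $b_1$ on the leftover vertex set via Lemma~\ref{lem:hamcycle}(iii).

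More precisely, I would construct $Q_2,\dots,Q_m$ iteratively, where each $Q_i$ joins $a_i$ to $b_i$ and has at most two interior vertices. At stage $i$, let $W\In V(G)$ consist of $\{a_j,b_j:j\neq i\}$ together with all interior vertices of $Q_2,\dots,Q_{i-1}$. Then $|W|\leq 2(m-1)+2(i-2)\leq 4m\leq 4\alpha n/5$. Since $\delta(G)\geq\alpha n$, the sets $U_a:=N_G(a_i)\sm(W\cup\Set{b_i})$ and $U_b:=N_G(b_i)\sm(W\cup\Set{a_i})$ both have size at least $\alpha n/5-1\geq\eps n$. If $U_a\cap U_b\neq\emptyset$, pick a common vertex $x$ and set $Q_i:=a_ixb_i$; otherwise $U_a,U_b$ are disjoint and lower-$(p,\eps)$-regularity gives $e_G(U_a,U_b)\geq(p-\eps)|U_a||U_b|>0$, so we may take an edge $xy$ with $x\in U_a$, $y\in U_b$ and set $Q_i:=a_ixyb_i$.

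Once $Q_2,\dots,Q_m$ have been built, let $V':=V(G)\sm\bigcup_{i=2}^m V(Q_i)$; note that $a_1,b_1\in V'$ because they were placed in $W$ at every stage. Then $|V'|\geq n-4(m-1)\geq n/5$, so by Proposition~\ref{prop:changes}(iii) the graph $G[V']$ is lower-$(p,5\eps)$-regular, while $\delta(G[V'])\geq\alpha n-4m\geq\alpha n/5\geq(\alpha/5)|V'|$. Hence Lemma~\ref{lem:hamcycle}(iii) produces a Hamilton path $P_1$ in $G[V']$ from $a_1$ to $b_1$, and setting $P_i:=Q_i$ for $i\geq 2$ yields the required decomposition. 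The algorithmic statement follows from the corresponding algorithmic part of Lemma~\ref{lem:hamcycle}(iii), since each $Q_i$ is found by examining neighborhoods and pairs in polynomial time.

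The main obstacle is purely bookkeeping: one must verify at every stage that the forbidden set $W$ never eats up too large a fraction of the neighborhoods of $a_i$ and $b_i$, and that the leftover graph $G[V']$ retains linear minimum degree and strong enough lower-regularity for Lemma~\ref{lem:hamcycle}(iii) to apply. Both reduce to the hypothesis $m\leq\alpha n/5$, which ensures that the short connecting paths together use at most a small constant fraction of the vertices, so that the constants degrade by only a bounded factor. No more delicate tool beyond Lemma~\ref{lem:hamcycle}(iii) and Proposition~\ref{prop:changes} appears to be needed.
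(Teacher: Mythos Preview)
Your proposal is correct and follows essentially the same approach as the paper: build short connecting paths of length at most~$3$ for all but one of the pairs, then use Lemma~\ref{lem:hamcycle}(iii) to absorb the remaining vertices into a Hamilton path joining the last pair. The only difference is cosmetic---the paper reserves the final pair $(a_m,b_m)$ for the long path rather than $(a_1,b_1)$, and phrases the bookkeeping in terms of the restricted graph $G_i:=G-X_i$ rather than forbidden sets $W$, but the estimates and tools (Proposition~\ref{prop:changes}(iii) and lower-$(p,\eps)$-regularity to find the length-$3$ connectors) are identical.
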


\proof
Our paths $P_1,\dots,P_{m-1}$ will have length at most $3$ and $P_m$ will contain all the remaining vertices. Suppose that for some $1\leq i\leq m$, we have already defined vertex-disjoint paths $P_1,\dots,P_{i-1}$ in $G$ of length at most $3$ and such that $P_j$ joins $a_j$ to $b_j$ and is disjoint from $\Set{a_i,b_i,\dots,a_m,b_m}$. Let $X_i:=(V(M)\sm\Set{a_i,b_i})\cup\bigcup_{j=1}^{i-1} V(P_j)$. Let $G_i:=G-X_i$. Since $|X_i|\leq2m+2(i-1)\leq4m\leq4\alpha n/5$, we have $\delta(G_i)\geq\alpha n/5$. As $|G_i|\geq n/5$, $G_i$ is lower-$(p,5\eps)$-regular by Proposition~\ref{prop:changes}(iii). If $i<m$, we seek a path of length at most $3$. Suppose there is no path of length at most two in $G_i$ joining $a_i$ to $b_i$. Then $N_{G_i}(a_i)$ and $N_{G_i}(b_i)$ are disjoint and both have size at least $\alpha n/5\geq5\eps n$. Since $G_i$ is lower-$(p,5\eps)$-regular, there exists an edge in $G_i$ between $N_{G_i}(a_i)$ and $N_{G_i}(b_i)$ which gives us a path $P_i$ of length $3$ joining $a_i$ and $b_i$. Finally, if $i=m$, then Lemma~\ref{lem:hamcycle}(iii) tells us that we can find a Hamilton path $P_m$ in $G_m$ from $a_m$ to $b_m$ (in polynomial time).
\endproof

For a graph $G$ and $x\in V(G)$, let $\mbox{def}_G(x):=\Delta(G)-d_G(x)$ denote the \emph{deficiency of $x$ in $G$}.

\lateproof{Theorem~\ref{thm:colouring}}
Choose constants $\eps,\eta$ and $n_0\in\bN$ such that $0<1/n_0\ll\eps\leq\eta\ll p$. Let $G$ be any lower-$(p,\eps)$-regular graph on $n\geq n_0$ vertices with $\Delta(G)-\delta(G)\leq\eta n$, where $n$ is even. Let $\alpha:=p/2$. It is then easy to see that $\delta(G)\geq \alpha n$.\COMMENT{Consider a partition of $V(G)$ into sets $S,T$ such that $|S|=\eps n$ and $|T|=(1-\eps)n$. We have $e_G(S,T)\geq(p-\eps)|S||T|$. Moreover, since each vertex in $S$ sends at most $\Delta(G)$ edges to $T$, we have $e_G(S,T)\leq|S|\Delta(G)$. Thus, $\Delta(G)\geq(p-\eps)(1-\eps)n$, implying $\delta(G)\geq(p-\eps)(1-\eps)n-\eta n\geq pn/2$.}

If $G$ contains any overfull subgraph, then $\chi'(G)>\Delta(G)$. So let us assume that $G$ contains no overfull subgraph.
Label the vertices of $G$ such that $d_G(x_1)\leq\dots\leq d_G(x_n)$ and let $\mbox{def}_i:=\mbox{def}_G(x_i)$.

We claim that $\mbox{def}_1\leq\sum_{i>1}\mbox{def}_i$. Suppose not, and let $H:=G-x_1$. Then $e(H)=e(G)-d_G(x_1)$ and so
\begin{align*}
2e(H)&=\sum_{i=1}^n d_G(x_i)-2d_G(x_1)=\sum_{i>1} d_G(x_i)-d_G(x_1) \\
     &=\sum_{i>1} (\Delta(G)-\mbox{def}_i)-\Delta(G)+\mbox{def}_1>(n-2)\Delta(G)=(|H|-1)\Delta(G).
\end{align*}
Since $|H|$ is odd, $H$ is overfull, a contradiction. Hence, we have $\mbox{def}_1\leq\sum_{i>1}\mbox{def}_i$. Moreover, since $n$ is even, the sum of all deficiencies is even. So by Theorem~\ref{thm:multigraphic}, there exists a multigraph $A$ on $V(G)$ such that $d_A(x_i)=\mbox{def}_i$. We will use this auxiliary multigraph $A$ to find a dense spanning regular subgraph $G_k$ of $G$. Observe that $\Delta(A)=\mbox{def}_1=\Delta(G)-\delta(G)\leq\eta n$. Thus, $\chi'(A)\leq2\eta n$. Hence, we can (greedily) partition $E(A)$ into $k\leq 6\eta n/\alpha$ matchings $M_1,\dots,M_k$ of size at most $\alpha n/6$.\COMMENT{If $M$ is a matching in $A$, then we can break it into at most $3/\alpha$ submatchings of size at most $\alpha n/6$.} We will now inductively take out linear forests from $G$ by applying Lemma~\ref{lem:pathcontrol} with $M_1,\dots,M_k$. More precisely, we define spanning subgraphs $G_0,\dots,G_k$ of $G$ and edge-disjoint linear forests $F_1,\dots,F_k$ such that
\begin{itemize}
\item[(a)] $G_0:=G$ and $G_i=G_{i-1}\sm F_i$ for $i\geq1$, 
\item[(b)] $F_i$ is a spanning linear forest in $G_{i-1}$ whose leaves are precisely the vertices in $M_i$.
\end{itemize}
Let $G_0:=G$ and suppose that for some $1\leq i\leq k$ we have already defined $G_0,\dots,G_{i-1}$ and $F_1,\dots,F_{i-1}$. Since $\Delta(F_1\cup\dots\cup F_{i-1})\leq2(i-1)\leq12\eta n/\alpha$, it follows that $\delta(G_{i-1})\geq(\alpha-12\eta/\alpha)n\geq5\alpha n/6$. Moreover, let $\eps':=2\sqrt{12\eta/\alpha}$. Then, by Proposition~\ref{prop:changes}(ii), $G_{i-1}$ is lower-$(p,\eps')$-regular. Hence, since $M_i$ has size at most $\alpha n/6$, we can apply Lemma~\ref{lem:pathcontrol} to $G_{i-1}$ and $M_i$ to obtain a spanning linear forest $F_i$ in $G_{i-1}$ whose leaves are precisely the vertices in $M_i$. Set $G_i:=G_{i-1}\sm F_i$.

We claim that $G_k$ is regular. Consider any vertex $x\in V(G)$. Then, $d_{G_k}(x)=d_G(x)-\sum_{i=1}^k d_{F_i}(x)$. For every $1\leq i\leq k$, $d_{F_i}(x)=1$ if $x$ is an endvertex of some edge of $M_i$, and $d_{F_i}(x)=2$ otherwise. Since $M_1,\dots,M_k$ partition $E(A)$, we conclude that $\sum_{i=1}^k d_{F_i}(x)=2k-d_A(x)=2k-\mbox{def}_G(x)$. Thus, $d_{G_k}(x)=d_G(x)-2k+\mbox{def}_G(x)=\Delta(G)-2k$.

Let $r:=\Delta(G)-2k$. We have shown that $G_k$ is $r$-regular. Moreover, $r\geq5\alpha n/6$ and $G_k$ is lower-$(p,\eps')$-regular. If $r$ is even, we can decompose $G_k$ into Hamilton cycles by means of Theorem~\ref{thm:hamdecrobexpundirected} and Proposition~\ref{prop:REG2EXP}.
If $r$ is odd, we can use Lemma~\ref{lem:hamcycle}(ii) to find a perfect matching $M^\ast$ in $G_k$. Let $G_k^\ast:=G_k\sm M^\ast$, then $G_k^\ast$ is even-regular and we can proceed analogously.

In order to obtain an optimal edge colouring of $G$, we colour each $F_i$ with two colours, each Hamilton cycle with two colours, and if $r$ is odd, $M^\ast$ with one additional colour. Since $x_n$ is incident with an edge of every colour, we use $\Delta(G)$ colours.

Let us finally check that this yields a polynomial time algorithm. Given $G$, first check if $\mbox{def}_1\leq\sum_{i>1}\mbox{def}_i$. If not, then we know that $\chi'(G)=\Delta(G)+1$ and can use any polynomial time algorithm that attains Vizing's bound (see e.g. \cite{GM}). If the inequality holds, then we can construct a $\Delta(G)$-colouring by proceeding as above. Since Theorem~\ref{thm:multigraphic}, Lemma~\ref{lem:pathcontrol}, Lemma~\ref{lem:hamcycle}(ii) and Theorem~\ref{thm:hamdecrobexpundirected} give appropriate running time statements, this can be achieved in time polynomial in $n$.
\endproof

\vspace{1cm}

{\footnotesize \obeylines \parindent=0pt

Stefan Glock, Daniela K\"{u}hn, Deryk Osthus
School of Mathematics
University of Birmingham
Edgbaston
Birmingham
B15 2TT
UK
}
\begin{flushleft}
{\it{E-mail addresses}:
\tt{\{sxg426,d.kuhn,d.osthus\}@bham.ac.uk} }
\end{flushleft}

\end{document}